\newtheorem{corollary}{Corollary}
\newtheorem{lemma}{Lemma}
\newtheorem{proposition}{Proposition}
\newtheorem{theorem}{Theorem}
\newtheorem{example}{Example}
\newcommand{\tmtextit}[1]{{\itshape{#1}}}
\newcommand{\C}{{\mathbb{C}}}
\newcommand{\R}{{\mathbb{R}}}
\DeclareMathAlphabet      {\mathbfit}{OML}{cmm}{b}{it}
\def\lam{{\lambda}}
\def\Lam{{\Lambda}}
\newcommand{\wt}{{\rm wt}}
\newcommand{\diag}{{\rm diag}}
\newcommand{\GL}{{\rm GL}}
\newcommand{\SL}{{\rm SL}}
\newcommand{\Sp}{{\rm Sp}}
\newcommand{\SO}{{\rm SO}}
\newcommand{\GSp}{{\rm GSp}}
\newcommand{\PGSp}{{\rm PGSp}}
\newcommand{\CE}{{\mathcal {E}}}
\newcommand{\FJ}{{\mathfrak J}}
\newcommand{\Fb}{{\mathfrak b}}
\newcommand{\Fc}{{\mathfrak c}}
\newcommand{\Fd}{{\mathfrak d}}
\newcommand{\Fs}{{\mathfrak {s}}}
\newcommand{\Ft}{{\mathfrak t}}
\newcommand{\FD}{{\mathfrak D}}
\newcommand{\FU}{{\mathfrak U}}
\newcommand{\Fx}{{\mathfrak x}}
\newcommand{\Spin}{{\rm Spin}}
\newcommand{\Z}{{\mathbb Z}}
\newcommand{\ud}{\,\mathrm{d}}
\def\bks{{\backslash}}
\newcommand{\cpair}[1]{\left\{{#1}\right)}
\newcommand{\ppair}[1]{\left( {#1} \right)}
\begin{document}
 \title{Tokuyama-type formulas for type B}
  \author{Solomon Friedberg and Lei Zhang}
\address{Department of Mathematics, Boston College, Chestnut Hill, MA 02467-3806}
\address{
Department of Mathematics, National University of Singapore\\
Block 17, 10 Lower Kent Ridge Road, Singapore 119076
}
\email{solomon.friedberg@bc.edu}
\email{matzhlei@nus.edu.sg}
\thanks{This work was supported by NSA grant H98230-13-1-0246 (Friedberg).}
\subjclass[2010]{Primary 05E10; Secondary 17B10, 11F70, 22E50} 
\keywords{Tokuyama formula, Gelfand-Tsetlin pattern, symplectic shifted tableau}

\begin{abstract}
We obtain explicit formulas for the product of a deformed Weyl denominator with the character of an irreducible representation of the spin group $\Spin_{2r+1}(\C)$, which is an analogue of the formulas of
Tokuyama for Schur polynomials and Hamel-King for characters of symplectic groups.  To give these, we 
start with a symplectic group and 
obtain such characters using the Casselman-Shalika formula.  We then analyze this using objects
which are naturally attached to the metaplectic double cover of an odd orthogonal group, which also has
dual group $\Spin_{2r+1}(\C)$.
 \end{abstract}
 \maketitle

\section{Introduction}

Our goal in this paper is to establish combinatorial formulas for the product of a deformed Weyl denominator
with the character of an irreducible representation of the spin group $\Spin_{2r+1}(\C)$.
For Cartan type A, the first example of such formula was given by Tokuyama
\cite{Tokuyama},
who expressed a Schur polynomial attached to a dominant weight $\lambda$, multiplied by a deformation of
the Weyl denominator, as a sum over strict Gelfand-Tsetlin 
patterns of weight $\lambda+\rho$, where $\rho$ is the Weyl vector.  This formula may be rephrased in terms
of crystal graphs (see Brubaker, Bump and Friedberg~\cite{BBF11}, Thm.~5.3); in that 
case one attaches a weight to each vertex of the crystal
graph $\mathcal{B}_{\lambda+\rho}$ of type A and highest weight $\lambda+\rho$
and takes a sum over vertices.  It may also be rephrased in terms of other combinatorial
objects such as semistandard Young tableaux.  
Similar formulas for a highest weight character of a symplectic group $\Sp_{2r}(\C)$ were given
by Hamel and King \cite{HK}.  Their basic formula expresses such a character in terms of $sp(2r)$-standard
shifted tableaux; it may be rewritten in terms of other combinatorial objects (see also Ivanov \cite{Iv}).
Our goal here is to present formulas for the dual case, where a symplectic group is replaced by a Spin or odd orthogonal group.
Though dual, this case is fundamentally different; indeed a formula analogous to Hamel and King's had not
previously been conjectured.  As we shall explain, our approach involves the use of metaplectic groups, which
were not required for previous Cartan types but which appear naturally in considering this case.

Our basic formula is given as follows:
For rank $r$, let $D_B(z;t)$ be the deformed Weyl denominator
$$
D_{B}(z;t)=\prod^{r}_{i=1}z^{-\frac{2(r-i)+1}{2}}_{i}\prod^{r}_{i=1}(1+tz_{i})\prod_{1\leq i<j\leq r}(1+tz_{i}z^{-1}_{j})(1+tz_{i}z_{j}).
$$ 
Let $\lambda$ denote a dominant weight for the group $\Spin_{2r+1}(\C)$, and let $\chi_\lambda$ denote the
character of the representation of highest weight $\lambda$.  Let $e_i$ be the cocharacters
described in Section~\ref{sec:preliminaries} below
and let $z=\prod_{i=1}^r e_i(z_i)$ be an element of the split torus.
Then we shall establish formulas for $D_B(z;t)\chi_{\lambda}(z)$. 

To state our main result, we recall that a Gelfand-Tsetlin (or for short, GT) pattern of type C is an array of non-negative integers of the form
$$
P=\begin{matrix}
a_{0,1}&&a_{0,2}&&\cdots&&a_{0,r}&\\
&b_{1,1}&&b_{1,2}&\cdots&b_{1,r-1}&&b_{1,r}\\
&&a_{1,2}&&\cdots&&a_{1,r}&\\
&&&\ddots&&\ddots&&\vdots\\
&&&&&&a_{r-1,r}&\\
&&&&&&&b_{r,r}	
\end{matrix}
$$
such that the entries interleave, that is, such that
$$\min(a_{i-1,j},a_{i,j})\geq b_{i,j}\geq \max(a_{i-1,j+1},a_{i,j+1})$$ for all $i,j$ (omitting any entries not defined),
or, equivalently, $$\min(b_{i+1,j-1},b_{i,j-1})\geq a_{i,j}\geq \max(b_{i+1,j},b_{i,j}).$$
The set of patterns with fixed top row may be identified with a basis for the highest weight representation of the
symplectic group $\Sp_{2r}(\C)$ whose 
highest weight is determined by the top row (Gelfand and Tsetlin \cite{GCet}).
These patterns arise from branching rules; see the discussion in Proctor~\cite{Proctor}, 
which builds on Zhelobenko~\cite{Zhelobenko}.  
They may also be realized in terms of crystal graphs, as in Littelmann~\cite{Littelmann}.

An entry $a_{i,j}$ in a GT-pattern $P$ is called {\it maximal}  if $a_{i,j}=b_{i,j}$ and {\it minimal} if $a_{i,j}=b_{i,j-1}$.
An entry $b_{i,j}$ is called {\it maximal} if $b_{i,j}=a_{i-1,j}$ and {\it minimal} if $1\leq j<r$ and $b_{i,j}=a_{i-1,j+1}$; $b_{i,r}$
is called minimal when it is zero.
If none of these conditions holds, we say that the entry is {\it generic}. 
Note that the definitions of maximality and minimality are opposite to Definition 7 in
Beineke, Brubaker and Frechette \cite{BeBrFr}.
Let $gen(P)$ (resp.~$max(P)$) be the number of generic (resp.~maximal) entries in $P$. 
Define $max_\iota(P)$ for $\iota\in\{0,1\}$ to be the number of maximal entries $x$ in $P$ such that 
$\Fc(x)\equiv \iota\bmod{2}$, 
where $\Fc(a_{i,j})=\sum^{j-1}_{m=i}(b_{i,m}-a_{i,m+1})$ 
and $\Fc(b_{i,j})=\Fc(a_{i,j})+\sum^r_{k=j+1}(a_{i-1,k}+a_{i,k})$.

For example, the pattern 
\begin{equation}\label{pattern}
P=\begin{matrix}
11& & 9& & 7& & 3& & 1& \\
& \bar{11}& & 7& & {\bf 4}& & \bar{3}& & \bar{1}\\
& & {\bf 9}& & {\bf 5}& & \bar{3}& & \bar{1}& \\
& & & {\bf 7}& & \bar{5}& & \bar{3}& & 0\\
& & & & 7& & 5& & {\bf 1}& \\
& & & & & 5& & {\bf 3}& & \bar{1}\\
& & & & & & 5& & 3& \\
& & & & & & & \bar{5}& & {\bf 2}\\
& & & & & & & & {\bf 3}& \\
& & & & & & & & &0 
\end{matrix}
\end{equation}
has generic entries given in boldface and 
the maximal entries overlined.
We have $gen(P)=8$, $max(P)=9$ and $max_1(P)=4$ which counts $b_{1,4}$, $b_{1,5}$, $a_{1,4}$ and $a_{1,5}$.

A GT-pattern is strict if its entries are strictly decreasing across each row.
If $\mu=(\mu_1,\dots,\mu_r)$ is a vector of non-negative integers, let $GT(\mu)$ be the set of strict GT-patterns with top row
\begin{equation}\label{eq:top-row}
(\mu_1+\mu_2+\cdots+\mu_{r-1}+\mu_r,\mu_{2}+\cdots+\mu_{r-1}+\mu_r,\dots,\mu_{r-1}+\mu_r,\mu_r).
\end{equation}
Let $GT^{\circ}(\mu)$  be the subset of all GT-patterns $P$ in $GT(\mu)$ such that $\Fc(x)\equiv 0\bmod{2}$
for all generic entries $x$ in $P$.  (The pattern in \eqref{pattern} is thus in $GT^\circ(2,2,4,2,1)$.) 
See Lemma~\ref{lm:GT} below for a characterization of such patterns.

Let $\upsilon:\Z^r_{\geq0}\to\Z^r_{\geq0}$ be the map 
\begin{equation}\label{upsilon}\upsilon(\mu_1,\dots,\mu_{r-1},\mu_r)=(2\mu_1,\dots,2\mu_{r-1},\mu_r).
\end{equation}
The map $\upsilon$ arises for a conceptual reason involving root systems, as we shall explain in Section~\ref{sec:preliminaries} below.
For $P$ in $GT^{\circ}(\upsilon(\mu))$ note that $max_1(P)$ is even.  For such $P$, define 
$$
G(P)=(-1)^{\frac{max_1(P)}{2}}t^{max(P)-\frac{max_1(P)}{2}}(1+t)^{gen(P)}.
$$
Let 
$$
\wt_i(P)=\sum^r_{k=i-1}a_{i-1,k}-2\sum^r_{k=i}b_{i,k}+\sum^r_{k=i}a_{i,k},
$$
and let $\wt(P)=\sum_{i=1}^r \wt_i(P) e_i^\vee$, where the $e_i^\vee$ are in the weight lattice attached to
$\Spin_{2r+1}(\C)$ as in Section~\ref{sec:preliminaries}.
Thus
$$
z^{-\wt(P)/2}=\prod_{i=1}^r z_i^{-<e_i,\frac{\wt_i(P)}{2}e_i^\vee>}=\prod_{i=1}^r z^{-\frac{\wt_i(P)}{2}}_i.
$$

For example, if $r=1$, then $\upsilon(\mu)=\mu$ and
$$
GT^\circ(\upsilon(\mu))=GT(\mu)=\left\{ \begin{matrix}
	\mu & \\ & b_{1,1}
\end{matrix}~~ \Big| ~~0\leq b_{1,1}\leq \mu \right\}.
$$
For all $P\in GT(\mu)$, $max_1(P)=0$ and
$$
G(P)=\begin{cases}
1  & \text{ if } b_{1,1}=0\\
1+t  & \text{ if } 0<b_{1,1}<\mu\\
t & \text{ if } b_{1,1}=\mu.
\end{cases}
$$
Furthermore, $\wt_1(P)=\mu-2b_{1,1}$.  Note that $\chi_\lambda(z)$ is given by the Schur polynomial $s_\lambda(z^{1/2},z^{-1/2})$.
(Indeed, $\Spin_3$ is isomorphic to $\SL_2$. The torus $e_1(z)$ of $\Spin_3$ is mapped into $\SL_2$ as 
$\diag(z^{1/2},z^{-1/2})$. 
Hence $\chi_\lambda(z)=s_\lambda(z^{1/2},z^{-1/2})$.)

Write $\lambda=\sum_i \lambda_i \epsilon_i$, where $\epsilon_i$ are the fundamental weights for 
$\Spin_{2r+1}(\C)$, so $\lambda+\rho=\sum_i (\lambda_i+1)\epsilon_i$. 
Let $\mu=(\lambda_1+1,\dots,\lambda_r+1)$.  We write $\upsilon(\lambda+\rho):=\upsilon(\mu)$. Then our main result is:
\begin{theorem}\label{main-theorem}
\begin{equation}\label{eq:T-B}
D_B(z;t)\,\chi_{\lambda}(z)=\sum_{P\in GT^{\circ}(\upsilon(\lambda+\rho))}G(P)z^{-\frac{\wt(P)}{2}}.
\end{equation}	
\end{theorem}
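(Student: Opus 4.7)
The introduction lays out the strategy explicitly: both the symplectic group $\Sp_{2r}(\C)$ and the metaplectic double cover of an odd orthogonal group have dual group $\Spin_{2r+1}(\C)$, and one should pass between these two perspectives via the Casselman-Shalika formula. My plan is to exploit this duality and the already-known Hamel-King formula on the symplectic side to derive the formula on the spin side.

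First, I would apply the Hamel-King formula, which expresses the product of a type C deformed Weyl denominator with a highest weight character $\chi^{\Sp_{2r}}_\mu$ as a sum over $sp(2r)$-standard shifted tableaux. Translating such tableaux to GT patterns of type C (the bijection being standard), this becomes a sum over $GT(\mu)$ with certain weights in $t$ attached to the entries. This identity can be interpreted as the value of a spherical Whittaker function on $\Sp_{2r}$ via the Casselman-Shalika formula, with $t$ playing the role of $-q^{-1}$.

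The key move is then to reinterpret the same character (with $\mu$ replaced by $\upsilon(\lambda+\rho)$, reflecting the duality between short roots of type B and long roots of type C) as a Whittaker coefficient on the metaplectic double cover of $\SO_{2r+1}$. The dual group of this cover is again $\Spin_{2r+1}(\C)$, but now the Casselman-Shalika-type formula in the metaplectic setting naturally produces $D_B(z;t)\,\chi_\lambda(z)$ on the right-hand side of the Whittaker expression. I would use the metaplectic Whittaker formula to push the type C sum through the double cover: each entry in a GT pattern contributes a local factor depending on whether it is maximal, minimal, or generic, with the generic entries producing $(1+t)$ factors and the maximal entries producing factors of $t$ dressed with a sign. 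The key point is that passing through the metaplectic cover introduces certain Gauss-sum-like contributions that both (a) enforce the parity condition cutting out $GT^\circ(\upsilon(\lambda+\rho))$ from $GT(\upsilon(\lambda+\rho))$, and (b) explain the sign $(-1)^{\max_1(P)/2}$ in $G(P)$ by the quadratic character of the double cover evaluated on the maximal-entry Gauss sums.

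The main obstacle is the combinatorial bookkeeping: tracking precisely how the metaplectic corrections interact with the entry-by-entry weights in the Hamel-King/GT-pattern expansion, so that the $\Fc(x)\bmod 2$ parity statistic, the doubling map $\upsilon$, and the refined count $\max_1(P)$ all emerge naturally. In particular, the weight function $G(P)$ is very rigid, and matching the power of $1+t$ (generic entries), the power of $t$ (maximal entries minus half the $\max_1$), and the sign must be done carefully. A secondary subtlety is ensuring that the weight $\wt(P)$ in the GT statistic matches the torus exponent predicted by the dual-group identification $e_i \leftrightarrow \tfrac12 e_i^\vee$, which explains the factor of $1/2$ in $z^{-\wt(P)/2}$. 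Once these correspondences are nailed down, the identity in \eqref{eq:T-B} should follow by comparing the two Casselman-Shalika evaluations of the same underlying Whittaker object.
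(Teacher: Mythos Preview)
Your proposal has a genuine gap in its logical structure. The Hamel--King formula expresses $D_C(z;t)\,\chi^{\Sp_{2r}}_\mu(z)$ --- a deformed character of $\Sp_{2r}(\C)$ --- as a sum over symplectic shifted tableaux. Via Casselman--Shalika this is the spherical Whittaker function on $\SO_{2r+1}$ (whose dual is $\Sp_{2r}$), not on $\Sp_{2r}$ as you write. The object you want, $D_B(z;t)\,\chi_\lambda(z)$ with $\chi_\lambda$ a $\Spin_{2r+1}(\C)$ character, is the Whittaker function on $\PGSp_{2r}$ (or, by the shared dual group, on the double cover of $\SO_{2r+1}$). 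These are Whittaker functions on \emph{different} $p$-adic groups, and there is no mechanism in your outline to pass from one to the other: ``reinterpreting the same character'' is not a valid step, since $\chi^{\Sp_{2r}}_\mu$ and $\chi^{\Spin_{2r+1}}_\lambda$ are unrelated functions on non-isomorphic tori. Replacing $\mu$ by $\upsilon(\lambda+\rho)$ in Hamel--King does not convert a symplectic character into a spin character, and the ``two Casselman--Shalika evaluations of the same underlying Whittaker object'' you invoke at the end are in fact evaluations of two different Whittaker objects.

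The paper's proof is structurally different and never invokes Hamel--King. It computes the Whittaker coefficients of the Borel Eisenstein series on $\Sp_{2r}$ (passing to $\GSp_{2r}$ for the full range of characters) directly by unfolding, obtaining an explicit exponential sum $G(\Ft)$ indexed by short patterns of type~B (Theorem~\ref{thm1}, Proposition~\ref{pro:G}). Casselman--Shalika on $\GSp_{2r}$ then equates the generating series of these coefficients with $D_B(z;-q^{-1})\chi_\lambda(z)$ (Proposition~\ref{first-formula}). The heart of the argument is the purely combinatorial identity of Propositions~\ref{pro:odd} and~\ref{pro:general}, showing that the type~B exponential sum equals a sum over type~C short patterns weighted by the quantities $\tilde\gamma$ modeled on quadratic Gauss sums; the metaplectic double cover serves only as \emph{motivation} for the shape of $\tilde\gamma$, and no computation is ever performed on that cover. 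Section~\ref{characters} then assembles the short-pattern identity inductively into full GT patterns and checks term by term that $G_{\Delta_C}(\Ft)=G(P)$. Your proposal would bypass exactly this exponential-sum computation and its type~B/type~C comparison, but supplies nothing to replace it.
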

\noindent
We also give a formula for the left-hand side in terms of symplectic shifted tableau (Corollary~\ref{tableaux-result}). 
We remark that setting $t=0$ in Theorem~\ref{main-theorem} one may recover the usual Weyl character formula.

We now sketch the proof of this result.
If $G$ is a connected reductive algebraic group defined over a non-archimedean local field $k$, then
Casselman and Shalika \cite{C-S} 
showed that the (normalized) Whittaker function attached to the unramified principal series
for $G_k$ may be expressed as the value of a character on the dual group $G^\vee$.  If one considers
the associated Eisenstein series over a global field $F$, then each global Whittaker coefficient is a 
product of such local Whittaker functions evaluated at suitable powers of the prime attached to $k$.
Accordingly the coefficients of this Eisenstein series capture values of characters.  

One may also consider metaplectic covers of groups, and it turns out that one may establish formulas
for the Whittaker coefficients of metaplectic Eisenstein series in terms of weighted sums over crystal graphs.
Such formulas were first obtained by Brubaker, Bump and Friedberg \cite{BBF5} in the type A case,
while the authors established such a formula for Eisenstein series on covers of odd orthogonal groups in \cite{FZ}.
If the cover has degree $n$, then these formulas involve weighted sums of products of $n$-th order Gauss sums.
When $n=1$, i.e.\ the cover is trivial, the Gauss sums degenerate into elementary functions,
and after applying the result of Casselman-Shalika,
these formulas then turn out to be equivalent to the formulas of Tokuyama and
of Hamel-King.  (See \cite{BBF5}, \cite{BeBrFr}.)

Here we consider the Whittaker coefficients of suitable Eisenstein series on symplectic groups.
Though dual  to \cite{FZ}, this case is fundamentally different.  Indeed, as Brubaker, Bump, Chinta and Gunnells observe \cite{BBCG},
the coefficients attached to 
odd degree covers of odd orthogonal groups and even degree covers of symplectic groups have natural descriptions
(in their work, conjectural) which are modeled on the type A description of  \cite{BBF5}.  
The $1$-fold cover of a symplectic group, which is needed here, is outside this formalism.  That is, the conjectural
formulas for the even covers of symplectic groups do not describe the situation for the odd covers.  Since no such formulas
are known, even conjecturally, we go back to the Eisenstein series and work with them to arrive at and prove
our result.  

Our proof goes as follows. Working with an Eisenstein series on a symplectic group,
we obtain an inductive formula for its Whittaker coefficients.
If $p$ is a prime, then the $p$-parts of these coefficients 
give the desired character at powers of $p$.  We then proceed to 
analyze this inductive expression, which involves an exponential sum.  We rewrite the 
exponential sum in terms of short (Gelfand-Tsetlin) patterns of type B.  These are combinatorial
objects which are related to full Gelfand-Tsetlin
patterns of type B as introduced by Proctor \cite{Proctor} just as short patterns of type A, found in \cite{BBF11}, Ch.~6,
are related to Gelfand-Tsetlin patterns of type A. Our next step is to show that the weighted sum over type B 
short patterns is equal to a weighted sum
over type C short patterns, with the Gauss sums on the type C side corresponding to the double cover (Proposition~\ref{pro:general}).
This turns out to be key.
The type B sum is not easy to work with; indeed its support when translated to the language of crystal graphs
would not be a finite crystal.  However, the type C sum we consider is well behaved.  

The passage to this dual case and double cover is motivated by the observation that 
both $\Sp_{2r}$ and the double cover of $SO_{2r+1}$ have {\sl the same dual group}, where the
dual of such a cover is defined by McNamara \cite{Mc}, Section 13.11.  
The passage from $\mu$ to $\upsilon(\mu)$ is natural in the context of the precise identification.
Then the critical fact is that in working with the 
double cover, the Gauss sums that arise are quadratic Gauss sums modulo $q$ for suitable
$q$.  These once again are evaluable, this time in terms of polynomials in $q^{1/2}$, 
and in a degenerate case they also give $-1$.
The type C sum may then be recast in a more traditional
combinatorial language and the main results established.    

We remark that Tokuyama's formula can be established directly from Pieri's rule; indeed this was the
approach used by Tokuyama. One could similarly establish our results by branching.  However, the 
details (at least as we have carried them out) turn out to be considerably longer than the approach given here.
Also, when one works over a number field with enough roots of unity, 
the calculations we give here may be extended to a metaplectic cover of arbitrary degree of a symplectic group.
However, doing so is considerably more complicated than the results presented here, as
for higher degree covers the Gauss sums are not in general evaluable.
One must use in detail our prior work for covers of type C in carrying out the analysis, which is
quite intricate.  This ultimately leads to a formula which describes the Whittaker coefficients
as sums over a crystal graph which applies to all covers of
symplectic groups. We shall present this in a separate work. We also remark that in type A,
Tokuyama's formula has geometric
significance---each summand represents the contribution
to the Whittaker coefficient from a Mirkovi\'c-Vilonen cycle, as shown
by McNamara \cite{Mc3}.  One could hope for a
similar interpretation of this formula (as well as its generalization to covers).

This paper is organized as follows.
In Section~\ref{sec:preliminaries} the notation is set and the basic Eisenstein series is introduced, and the map $\upsilon$ is recast in terms of dual groups.  In Section ~\ref{section:inductiveformula} the inductive 
formula for the Whittaker coefficients of this Eisenstein series
is established by following the method of the authors \cite{FZ} and Brubaker and Friedberg \cite{Br-Fr}.  
We then restrict to the $p$-power coefficients.
Section~\ref{sec:sumatp} begins our analysis of the inductive expression; short patterns are introduced, adorned with
boxes and circles that keep track of special cases (decorations that may naturally be expressed in terms
of crystal graphs as in \cite{BBF11}, Ch.~2), and the exponential sum is recast as sum over such objects,
with the weighting depending on the decorations.  The relation between the type B and type C sums occupies
Sections~\ref{sec:T-R} and \ref{sec:general}.  This is the most technically demanding part of the paper, as it draws heavily on ideas
from \cite{BBF11} and \cite{FZ}.  Section~\ref{sec:T-R} handles a specific case, the {\sl totally resonant case}, which 
turns out to be the most challenging case, similarly to types A and C. Then in Section \ref{sec:general} the full relation is 
established by reduction to  the totally resonant case. 
In Section~\ref{characters}, the type C expression is used to establish our main result,
Theorem~\ref{main-theorem}.  Then in the last section, the equality is reformulated
in terms of symplectic tableaux, an analogue of the original formula of Tokuyama 
and of the formula of Hamel and King for the dual case.

 \section{Preliminaries}\label{sec:preliminaries}

Let $J_{2r}$ be the skew-symmetric matrix of alternating $1$'s and $-1$'s given by
$$
J_{2r}=\begin{pmatrix}
&&&&-1\\
&&&\iddots&\\
&&1&&\\
&-1&&\\1&&&
\end{pmatrix}.
$$
Define  $\Sp_{2r}$ (or simply $G$) to be the split symplectic group which preserves the symplectic form given by $J_{2r}$, that is,
$$
\Sp_{2r}=\{g\in \GL_{2r}\mid {}^{t}\!gJ_{2r} g= J_{2r}\}.
$$ 
Let $B=TU$ be the Borel subgroup of $G$ consisting of all upper triangular matrices; thus, the torus $T$ consists of elements of the form
$$
d(t_1,\dots,t_r):=\diag(t_{1},t_{2},\dots,t_{r},t^{-1}_{r},\dots,t^{-1}_{2},t^{-1}_{1}).
$$ 

Let  ${\bf s}=(s_{1},s_{2},\dots,s_{r})\in\C^{r}$. 
Define a character on $T_{\R}$ by
$$
\FJ_{\bf s}(d(t_1,\dots,t_r))=\prod^{r}_{i=1}\prod^{i}_{j=1}|t_{j}|^{s_{i}}.
$$
Let $I({\bf s})$ be the space consisting of all smooth complex-valued 
functions $f$ on ${G}_{\R}$ such that 
$$
f(tug)=\FJ_{\bf s}(t)f(g),
$$
for all $t\in{T}_{\R}$, $u\in U_{\R}$, and $g\in {G}_{\R}$.
Then $I({\bf s})$ affords a representation of ${G}_{\R}$ by right translation.

For ${\bf s}$ with $\Re(s_i)$ sufficiently large, $1\leq i\leq r$, and for $f\in I({\bf s})$, define the Eisenstein series
$$
E_{f}(g, {\bf s})=\sum_{\gamma \in B_{\Z}\bks G_{\Z}} f(\gamma g), \qquad g\in {G}_{\R}.
$$
Then this Eisenstein series is absolutely convergent, and
has analytic continuation to all ${\bf s}\in\C^{r}$ and functional equations
(Langlands \cite{Lan}).

To close this Section, let us discuss root systems and clarify the role of the map $\upsilon$ defined in \eqref{upsilon} above.
Our numbering of the simple roots will be consistent with Bourbaki \cite{Bourbaki}, Plates II and III.
Let $\Phi$ be the set of roots of $\PGSp_{2r}$, and $\{\alpha_{i}\}_{1\leq i\leq r}$ with $\alpha_{i}=e_{i}-e_{i+1}$ for $i<r$ and $\alpha_{r}=2e_{r}$
be a set of simple roots.  Then
$\Phi^{\vee}=\{\pm e_{i}^\vee\pm e^\vee_{j},\pm e_{k}^\vee\colon 1\leq i<j\leq r,\ 1\leq k\leq r\}$ is the set of coroots.
The dual group is isomorphic to $\Spin_{2r+1}(\C)$.
The fundamental weights of this dual group are given by
$$
\varepsilon_{i}=\sum^{i}_{j=1}e^\vee_{i} \text{ for }1\leq i<r,\qquad
\varepsilon_{r}=\frac{1}{2}\sum^{r}_{j=1}e^\vee_{i}.
$$
There is also a notion of dual groups associated to an $n$-fold cover of a reductive group $H$ (though these covers 
are not algebraic groups); the roots of the dual
group are the coroots $\alpha^\vee$ of $H$ multiplied by a factor of $n/\gcd(n,\|\alpha^\vee\|^2)$ where $\|\alpha^\vee\|^2$ is the squared length of
the coroot, with the short coroots
normalized to have length 1. See for
example McNamara, \cite{Mc10}, Section 4.3 or \cite{Mc}, Section 13.11.  In particular, 
the root datum of the dual group associated to the double cover of ${\SO}_{2r+1}$ has roots
$\Phi'^\vee=\{2(\pm e'^\vee_i\pm e'^\vee_{j}), \pm 2e'^\vee_k\colon 1\leq i<j\leq r,\ 1\leq k\leq r\}$ .
This dual group is again isomorphic to $\Spin_{2r+1}(\C)$.

Let $\upsilon$ be the linear map given by $\upsilon(e^\vee_{i})=2e'^\vee_{i}$. 
 Then $\upsilon$ identifies $\Phi^\vee$ with $\Phi'^\vee$ and extends to a map of the weight lattices.   
 In particular, on the fundamental weights we have $\upsilon(\varepsilon_{i})=2\varepsilon'_{i}$ for $i<r$,
 $\upsilon(\varepsilon_{r})=\varepsilon'_{r}$.  If $\mu=\sum \mu_i\epsilon_i$, then 
 $\upsilon(\mu)=\sum^{r-1}_{i=1}2\mu_{i}\varepsilon'_{i}+\mu_{r}\varepsilon'_{r}$.  Written in components, this is the map 
 $\upsilon$ appearing in \eqref{upsilon}.
 Note also that $\upsilon^{-1}(\sum^{r}_{i=1}\wt_i(P)e_i'^\vee)=\sum^{r}_{i=1}\frac{\wt_i(P)}{2}e_i^\vee$.
If we let $\wt'(P)=\sum_i \wt_i(P)e'^\vee_i$, then \eqref{eq:T-B} may be rewritten
$$
D_B(z;t)\chi_{\lambda}(z)=\sum_{P\in GT^{\circ}(\upsilon(\lambda+\rho))}G(P)z^{-\upsilon^{-1}(\wt'(P))}.
$$

\section{An inductive formula for the Whittaker Coefficients}\label{section:inductiveformula}

Let $\psi(x)=\exp(2\pi i x)$, and
for each $r$-tuple ${\bf m}=(m_{1},\dots,m_{r})$ of nonzero integers, define a character $\psi_{\bf m}$ on 
the unipotent subgroup $U_{\R}$ by
$$
\psi_{\bf m}(u)=\psi(m_{1}u_{1,2}+m_{2}u_{2,3}+\cdots+m_{r}u_{r,r+1}).
$$
Let $f\in I({\bf s})$. Then the Whittaker coefficients studied in this paper are given by
\begin{equation} 
W_{\mathbf m}(f,\mathbf s)=\int_{U_\Z\backslash U_\R} E_f(u,\mathbf s)\,\psi_{\mathbf m}(u)\ud u.
\end{equation}
For $f\in I({\bf s})$ with $\Re(s_{i})$ sufficiently large and for $t\in T_{\R}$, we also introduce the Whittaker functionals
\begin{align*}
\Lam_{{\bf m},t}(f)&=\FJ(t)^{-1}\int_{U_\R}f(tJ_{2r}u)\,\psi_{\mathbf m}(u)\ud u.
\end{align*}

Our immediate goal is to obtain the following formula for the Whittaker coefficients,
expressing the rank $r$ coefficients in terms of rank $r-1$ coefficients.  Then the bulk of our work below will be
to analyze the combinatorial aspects of the exponential sums that arise in this inductive expression.

\begin{theorem}\label{thm1}   Suppose $\Re(s_{i})$ are sufficiently large. Then the Whittaker coefficient is given by
$$W_{\mathbf m}(f,\mathbf s)=\sum_{C_1,\dots,C_r=1}^\infty
\begin{frac}{H(C_1,\dots,C_r;\mathbf m)}
{C_1^{2s_1}\dots C_r^{2s_r}}\end{frac}
\Lambda_{\mathbf m, \mathbf C}(f),$$
where 
$${\bf C}=d(C^{-1}_{1},C^{-1}_{2}C_{1},\dots,C^{-1}_{r-1}C_{r-2},C^{-1}_{r}C_{r-1}).$$ 
If $r\geq 2$, then the coefficient $H(C_1,\dots,C_r;\mathbf{m})$ satisfies
\begin{align}\label{inductive-coeff}
&H(C_{1},\dots,C_{r};m_{1},\dots,m_{r})\\
=&\sum_{D_{i}, d_{j}=1 }^\infty\,H(D_{2},\cdots,D_{r},m_{2}\frac{d_{1}d_{2r-2}}{d_{2}d_{2r-1}},\dots,m_{r}\frac{d_{r-1}^{2}}{d_{r+1}^{2}})\,\prod^{2r-1}_{k=1}d_{k}^{k}\,L_{k}^{-1}\notag\\
&\times\sum_{\substack{c_{j}\bmod L_{j}\\{(c_j,d_j)=1}}}
\psi\Big(m_{1}\frac{c_{1}}{d_{1}}+m_{2}(\frac{u_{1}c_{2}}{d_{2}}-\frac{d_{1}c_{2r-1}u_{2r-2}}{d_{2}d_{2r-1}})+\cdots
\notag\\&\quad+m_{r-1}(\frac{u_{r-2}{c_{r-1}}}{d_{r-1}}-\frac{d_{r-2}c_{r+2}u_{r+1}}{d_{r-1}d_{r+2}})
+m_r(\frac{u^{2}_{r-1}c_{r}}{d_{r}}+2\frac{d_{r-1}u_{r-1}c_{r+1}}{d_{r+1}d_{r}}+\frac{d^{2}_{r-1}c_{r+1}^{2}u_{r}}{d_{r+1}^{2}d_{r}})\Big).\notag
\end{align}
Here the outer sum is over $D_{i}$ for $2\leq i\leq r$ and $d_{j}$ for $1\leq j\leq 2r-1$ such  that 
$C_{i}=\Fd_{i}D_{i}$ (where $D_{1}=1$ for convenience)
with
$$\Fd_{i}=\prod^{2r-1}_{j=i} d_{j}\prod^{i-1}_{j=1}d_{2r-j}$$
and such that the following divisibility conditions hold:
\begin{alignat*}{2}
&d_{j+1}\vert m_{j+1} d_{j}~&\text{for}~&1\leq j\leq r-2  \\
&d_{j+1}d_{2r-j}\vert m_{j+1} d_{j}d_{2r-j-1}~~~&\text{for}~&1\leq j\leq r-2\\& d^{2}_{r+1}\vert m_{r}d^{2}_{r-1}.&& 
\end{alignat*}
Also, we have set 
$$L_i=
\begin{cases}\prod^i_{j=1}d_j&\mbox{when $1\leq i\leq r-1$}\\
(\prod^{r-1}_{j=1}d_j)^2 d_r&\text{when $i=r$}\\
d^{-1}_{i'-1}(\prod^{i'}_{j=1}d_{r+j})d_r\prod^{r-1}_{j=1}d_j&\text{when $i=r+i'$, $1\leq i'\leq r-1$}
\end{cases}$$
with $d_0=1$, and $u_j$ satisfies $c_ju_j\equiv 1\bmod d_j$.
\end{theorem}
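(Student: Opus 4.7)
The proof follows the template of the authors' earlier work on odd orthogonal covers \cite{FZ} and of Brubaker--Friedberg for $\GL_n$ \cite{Br-Fr}, adapted to the symplectic case and arranged so as to expose the reduction from rank $r$ to rank $r-1$. First I would unfold: substituting the definition of $E_f$ into $W_{\mathbf m}(f, \mathbf s)$, swapping the finite sum with the unipotent integral, and using the Bruhat decomposition of $B_\Z \backslash G_\Z$, one sees that only the open cell survives, by the genericity of $\psi_{\mathbf m}$. A representative of this cell can be chosen in the form $t J_{2r} u$ with $t \in T_\R$ and $u \in U_\R$, and the Pl\"ucker coordinates (the $U$-left-invariant minors) factor into $r$ essentially independent positive integers $C_1, \dots, C_r$, with the torus part of the Iwasawa decomposition equal to $\mathbf{C} = d(C_1^{-1}, C_1 C_2^{-1}, \dots, C_{r-1} C_r^{-1})$. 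The $(\FJ_{\mathbf s}, U)$-equivariance of $f$ collapses the archimedean integral to $\Lambda_{\mathbf m, \mathbf C}(f)$, and the residual arithmetic sum over $u$ modulo $U_\Z$ is what defines $H(C_1, \dots, C_r; \mathbf m)$.

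For the inductive identity I would single out from a coset representative the first row---equivalently, by the symplectic constraint, the last column. The positive roots pairing nontrivially with $e_1^\vee$ are $e_1 \pm e_j$ ($j > 1$) together with $2 e_1$, a total of $(r-1)+(r-1)+1 = 2r-1$; hence $2r-1$ auxiliary integers $d_1, \dots, d_{2r-1}$ naturally appear. After dividing by the relevant gcds, what remains is a representative of $B_\Z \backslash (\Sp_{2r-2})_\Z$ with Pl\"ucker coordinates $D_2, \dots, D_r$, and the factorization $C_i = \Fd_i D_i$ exactly expresses each rank-$r$ fundamental minor as the product of its rank-$(r-1)$ analogue with the first-row contribution. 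The divisibility constraints on the $d_j$ are the integrality conditions on the entries of the residual $\Sp_{2r-2}(\Z)$-matrix.

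Next one computes the character $\psi_{\mathbf m}$ against this explicit parametrization. Each entry $u_{i,i+1}$ becomes a rational number whose numerator is a residue $c_j$ modulo $L_j$, coprime to $d_j$ because the corresponding Pl\"ucker minor is primitive, and whose denominator involves the multiplicative inverse $u_j$ of $c_j$ modulo $d_j$. Counting residues produces the factor $\prod_{k=1}^{2r-1} d_k^k L_k^{-1}$, while conjugation of the inner rank-$(r-1)$ character by the peeled-off torus element produces the twists $m_{i+1} d_i d_{2r-1-i} / (d_{i+1} d_{2r-i})$ that appear as the arguments of the inner $H$.

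The main obstacle is the long-root direction. In the $\GL_n$ setting of \cite{Br-Fr} all simple roots are symmetric and only linear terms occur; here the simple root $\alpha_r = 2 e_r$ forces genuinely quadratic behavior. The Iwasawa decomposition of the middle $\Sp_2$-block introduces a squared denominator in $d_{r-1}/d_{r+1}$, which is the source both of the unusual divisibility $d_{r+1}^2 \mid m_r d_{r-1}^2$ and of the three-term expansion $m_r ( u_{r-1}^2 c_r / d_r + 2 d_{r-1} u_{r-1} c_{r+1}/(d_{r+1} d_r) + d_{r-1}^2 c_{r+1}^2 u_r /(d_{r+1}^2 d_r) )$ in the exponent of $\psi$. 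Verifying this $2 \times 2$ matrix calculation carefully is the technical heart of the argument; once it is in hand, the remaining bookkeeping---isolating the open Bruhat cell, parametrizing by Pl\"ucker coordinates, and assembling the Jacobian---parallels \cite{FZ} and \cite{Br-Fr} directly.
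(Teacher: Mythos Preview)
Your strategy matches the paper's: both proceed by induction in stages following \cite{FZ} and \cite{Br-Fr}, peel off the last row of a coset representative to pass from $\Sp_{2r}$ to $\Sp_{2r-2}$, and isolate the long-root block as the genuinely new computation. The one substantive difference is in how the ``peeling'' is made explicit. Where you invoke Pl\"ucker coordinates and gcd reductions abstractly, the paper parametrizes $P_\Z\backslash G_\Z$ via a concrete factorization $i_1(g_1)\,i_2(g_2)\,i_3(g_3)$ through three embeddings $i_1,i_3\colon SL_r\hookrightarrow G$ and $i_2\colon SL_2\hookrightarrow G$ (their Lemma~\ref{lm:decomp}), and then writes each of $g_1,g_3\in P_r(\Z)\backslash SL_r(\Z)$ as a product of $r-1$ embedded $SL_2$'s as in \cite{BBF5}. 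Bruhat-decomposing each of these $2r-1$ embedded $SL_2$'s produces the pairs $(c_j,d_j)$ directly and turns the evaluation of $\psi_{\mathbf m}(J_{2r}^{-1}\FU^{-}J_{2r})$ into a mechanical matrix computation. This buys the paper a shortcut: the $i_1$ and $i_3$ contributions are literally type-A computations already carried out in \cite{BBF5}, so only the middle $i_2$ block---exactly your ``technical heart''---requires new work. Your Pl\"ucker-coordinate framing would arrive at the same place, but you would have to redo the type-A bookkeeping rather than quote it.
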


When $d_i$ are powers of a prime $p$, the quantities $\Fd_i$ will turn out to be closely related to the weights
appearing in the crystal graph description.  There is no divisibility condition for $d_{r}$, and indeed
we will see below that $G(\Ft)$ has non-zero value for any $d_{r}$ in the totally resonant case in Section~\ref{sec:T-R}.

\begin{proof}
This is proved by an induction in stages argument,
similar to the calculations in the odd orthogonal case \cite{FZ}.  The study of maximal
parabolic Eisenstein has recently been carried out in generality by Brubaker and Friedberg \cite{Br-Fr}
and the formula given here is a consequence of that work.
However the situation considered here is easier (base field $\mathbb Q$, a specific Lie type, trivial cover),
and one can simplify the argument by appealing to a factorization and
to earlier results for type $A$.  Accordingly we sketch this simplified
proof below for the convenience of the reader.

We have an inductive formula for the Eisenstein series
$$
E_{f}(g,s)=\sum_{\gamma\in P_\Z\bks G_\Z}\sum_{\delta\in B'_\Z\bks G'_\Z}f(\gamma
\delta g),
$$
where $P$ is the standard parabolic subgroup of $G$ with the Levi subgroup $\GL_{1}\times G'$ and $G'\cong\Sp_{2r-2}$.
If $B'$ is the Borel subgroup in $G'$, then we may
identify $B_\Z\bks P_\Z$ with $B'_\Z\bks G'_\Z$.

First, we parametrize the cosets $P_\Z\bks G_\Z$ by using the last row of matrices in $G_\Z$ modulo $\pm1$.
To describe a full set of coset representatives,
consider the three embeddings $i_1:SL_r\to G$, $i_2:SL_2\to G$, $i_3:SL_r\to G$
given as follows.  The embedding $i_1$ is described via blocks:
$$i_1:\begin{pmatrix}g&v\\w&a \end{pmatrix} \hookrightarrow
\begin{pmatrix}a'&&w'&\\&g&&v\\v'&&g'&\\&w&&a \end{pmatrix},
$$
where $g, g'\in M_{(r-1)\times(r-1)}$, $v, v', w^{t}, {w'}^{t}\in M_{(r-1)\times 1}$, and $a, a'\in M_{1\times 1}$;
the primed entries are uniquely determined so that the matrix is in $G$.   The embedding $i_2$ is the embedding of $\SL_2$ into $\Sp_{2r}$:
$$i_2:\begin{pmatrix}a&b\\c&d\end{pmatrix}\hookrightarrow
\begin{pmatrix}
a&&b\\
&I_{2r-2}&\\
c&&d
\end{pmatrix}.
$$
This embedding corresponds to a long root in $\Sp_{2r}$.  
The embedding $i_3$ maps to the Levi subgroup of the Siegel parabolic of $G$:
$$i_3: h\hookrightarrow \begin{pmatrix}h'&\\&h\end{pmatrix}.$$
Let $P_r$ denote the standard parabolic subgroup of $SL_{r}$ of type $(r-1,1)$.
Similarly to Lemmas 2 and 3 in \cite{FZ}, we have the following parametrization in the symplectic case.
\begin{lemma}\label{lm:decomp}
Let ${\bf R}=(R_{1},\dots,R_{2r})\in \Z^{2r}$ and suppose that $\gcd(R_{1},\dots,R_{2r})=1$.
Then there exist $g_{1},g_{3}\in \SL_{r}(\Z)$ and $g_{2}\in \SL_{2}(\Z)$ such that $i_{1}(g_{1})i_{2}(g_{2})i_{3}(g_{3})$ has bottom row ${\bf R}$.
Moreover, a complete set of coset representative for $P_\Z\bks G_\Z$ is given by $i_{1}(g_{1})i_{2}(g_{2})i_{3}(g_{3})$ where $g_{1},g_{3}\in P_{r}(\Z)\bks \SL_{r}(\Z)$ and $g_{2}\in P_{2}(\Z)\bks \SL_{2}(\Z)$. 
\end{lemma}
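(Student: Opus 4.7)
The plan is to follow the strategy of the analogous Lemmas in \cite{FZ} for the odd orthogonal case. First I would identify $P$ explicitly: the standard parabolic with Levi $\GL_1\times\Sp_{2r-2}$ is the stabilizer of the isotropic line $\langle e_1\rangle$, so the left action of $p\in P(\Z)$ scales the bottom row of $\gamma\in G_\Z$ by $\pm 1$ (the $\GL_1$-factor of the Levi). Classical symplectic completion over $\Z$ then identifies $P_\Z\bks G_\Z$ with the set of primitive vectors in $\Z^{2r}$ modulo $\{\pm 1\}$: every bottom row of an element of $\Sp_{2r}(\Z)$ is primitive, and conversely every primitive vector occurs as such a bottom row. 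This is the invariant that drives the rest of the argument.

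Next I would compute the bottom row of $i_1(g_1) i_2(g_2) i_3(g_3)$ by direct block multiplication. Writing
$g_1=\begin{pmatrix} g & v\\ w & a\end{pmatrix}$,
$g_2=\begin{pmatrix} a_2 & b_2\\ c_2 & d_2\end{pmatrix}$, $g_3=h$, and $h'\in\SL_r(\Z)$ for the companion matrix making $i_3(g_3)$ symplectic, the bottom row works out to
$$
\Bigl((a c_2,\,w_1,\dots,w_{r-1})\cdot h',\;a d_2\cdot (g_3)_{r,*}\Bigr).
$$

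For the existence assertion, given a primitive $\mathbf{R}=(R_1,\dots,R_{2r})$ I would read off the three factors from this formula. Set $D=\gcd(R_{r+1},\dots,R_{2r})$ and pick $g_3$ with bottom row $(R_{r+1},\dots,R_{2r})/D$ (primitive, hence a valid bottom row); this pins down $g_3$ mod $P_r(\Z)$ and determines $h'$. Let $(R_1',\dots,R_r'):=(R_1,\dots,R_r)(h')^{-1}$, and note $\gcd(R_1',\dots,R_r',D)=1$ since $(h')^{-1}\in\SL_r(\Z)$ preserves the gcd of entries of a vector and $\mathbf{R}$ is primitive. Put $a=\gcd(R_1',D)$, $c_2=R_1'/a$, $d_2=D/a$: then $\gcd(c_2,d_2)=1$, giving a valid bottom row for $g_2\bmod P_2(\Z)$. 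Finally setting $w_i=R_{i+1}'$ yields $\gcd(w_1,\dots,w_{r-1},a)=\gcd(R_2',\dots,R_r',\gcd(R_1',D))=1$, so $(w,a)$ is a valid bottom row for $g_1\bmod P_r(\Z)$. Reversing these steps shows that $(g_1,g_2,g_3)$ mod the respective parabolics is uniquely recoverable from the bottom row, which combined with the bijection of the first paragraph yields the coset parametrization claimed in the lemma.

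The hard part will be the careful bookkeeping of the sign ambiguities. The bijection $P_\Z\bks G_\Z\leftrightarrow\{\text{primitive vectors}\}/\{\pm 1\}$ carries a single sign, while each of $P_r(\Z)\bks\SL_r(\Z)$ and $P_2(\Z)\bks\SL_2(\Z)$ carries its own sign ambiguity on the bottom row, and one must verify that the combined three-factor ambiguity collapses exactly to the single sign on $\mathbf{R}$. A secondary subtlety is the degenerate case $D=0$ (the last $r$ entries of $\mathbf{R}$ all vanish), where one instead takes $d_2=0$ and $c_2=\pm 1$, matching $\mathbf{R}$ using only $g_1$ and $h'$; the construction adapts without essential change. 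The precise formula for $h'$ in terms of $g_3$ (dictated by the specific form $J_{2r}$) is never needed, only the facts that $h'\in\SL_r(\Z)$ whenever $g_3\in\SL_r(\Z)$ and that $g_3\mapsto h'$ is a bijection of $\SL_r(\Z)$.
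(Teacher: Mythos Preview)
Your approach is correct and matches what the paper intends: the paper does not actually prove this lemma but simply states it as the symplectic analogue of Lemmas~2 and~3 in \cite{FZ}, which is exactly the strategy you propose to follow. Your explicit bottom-row computation $((ac_2,w)h',\,ad_2\,h_{r,*})$ and the gcd-based construction of $(g_1,g_2,g_3)$ from $\mathbf{R}$ correctly flesh out the argument the paper omits, and you have rightly flagged the sign bookkeeping (together with the degenerate case $D=0$) as the point requiring care.
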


To compute the Whittaker coefficients, we may restrict to the cosets in the big cell $BJ_{2r}B$
as the others do not contribute.
Let $\gamma$ be a representative of a coset in $P_\Z\bks G_\Z\cap (B_\R J_{2r}B_\R)$, and 
$\gamma'=\gamma J_{2r}$. 
By Lemma~\ref{lm:decomp}, we have $\gamma'=i_1(g_1)\,i_2(g_2)\,i_3(g_3)$ as above.
Moreover, $g_1$ and $g_3$ may themselves be realized as products of $r-1$ embedded $\SL_{2}$'s, as in Brubaker, Bump and Friedberg \cite{BBF5}, pg.\ 1098.  
Decomposing each $\SL_{2}$ via the Bruhat decomposition, we obtain a factorization
$\gamma'=\FU^{+}\FD\FU^{-}$ where $\FU^{+}$ is in the unipotent radical of $P$ and $\FU^{-}$ is a lower triangular matrix. Moreover, 
\begin{align*}
\psi_{{\bf m}}(J^{-1}_{2r}\FU^{-}J_{2r})
=&\psi\left(m_{1}\frac{c_{1}}{d_{1}}+\sum^{r-2}_{i=1}m_{i+1}\ppair{\frac{c_{i+1}u_{i}}{d_{i+1}}-\frac{d_{i}u_{2r-i-1}c_{2r-i}}{d_{i+1}d_{2r-i}}}\right.\\
&\left.+m_{r}\ppair{\frac{u^{2}_{r-1}c_{r}}{d_{r}}+2\frac{d_{r-1}u_{r-1}c_{r+1}}{d_{r+1}d_{r}}+\frac{d^{2}_{r-1}c_{r+1}^{2}u_{r}}{d_{r+1}^{2}d_{r}}}\right),
\end{align*}
where the $(c_i,d_i)$ are the bottom rows of the embedded $SL_2(\Z)$ matrices.
Now one passes to the double cosets $\gamma\in P_\Z\bks G_\Z\cap (B_\R J_{2r}B_\R)/ U_{P}(\Z)$
and carries out an unfolding in the usual way.
It may be checked by considering bottom rows that a full set of double coset
representatives is obtained by taking $c_{i}$ modulo $L_i$ with $L_i$ as in the Theorem.
Following the calculations of \cite{BBF5} and \cite{FZ}, the Theorem follows. 
\end{proof}

\begin{corollary}\label{factorization}  Suppose that $C_i=\prod_p p^{\gamma_{p,i}}$, $m_j=\prod_p p^{\nu_{p,j}}$ are factorizations into distinct prime powers.
Then $H(C_1,\dots,C_r;\mathbf m)=\prod_{i,j} H(p^{\gamma_{p,1}},\dots,p^{\gamma_{p,r}};p^{\nu_{p,1}},\dots,p^{\nu_{p,r}}).$
\end{corollary}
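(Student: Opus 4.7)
The plan is a straightforward induction on $r$, using the inductive formula~\eqref{inductive-coeff} of Theorem~\ref{thm1} together with the Chinese Remainder Theorem (CRT). The base case $r=1$ is not covered by~\eqref{inductive-coeff} and must be treated directly: one computes $H(C_1;m_1)$ from the Fourier expansion of the rank-one Eisenstein series, obtaining a Ramanujan-type sum
$$
H(C_1;m_1)=\sum_{\substack{c\bmod C_1\\(c,C_1)=1}}\psi\!\left(\frac{m_1 c}{C_1}\right),
$$
whose multiplicativity $H(C_1;m_1)=\prod_p H(p^{\gamma_{p,1}};p^{\nu_{p,1}})$ follows from a standard CRT argument on the summation variable.

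For the inductive step, assume the corollary holds for rank $r-1$, and factor each $d_j$ and $D_i$ in~\eqref{inductive-coeff} according to its prime decomposition. The divisibility conditions
$$
d_{j+1}\mid m_{j+1}d_j,\qquad d_{j+1}d_{2r-j}\mid m_{j+1}d_j d_{2r-j-1},\qquad d_{r+1}^2\mid m_r d_{r-1}^2
$$
are imposed prime by prime, so the outer summation over $(D_i,d_j)$ splits as an iterated sum over prime-local data. The normalizing factor $\prod_k d_k^k L_k^{-1}$ is manifestly multiplicative in the $d_j$, and the shifted arguments entering $H(D_2,\dots,D_r;\dots)$ are also multiplicative in their prime factorizations, so by the inductive hypothesis this factor splits as a product over primes.

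The main technical step, which I expect to be the principal obstacle, is the factorization of the inner character sum $\sum_{c_j\bmod L_j,\,(c_j,d_j)=1}\psi(\cdots)$. Using CRT, one writes each $c_j\bmod L_j$ as a combination of residues modulo the prime-power parts of $L_j$; the associated inverse $u_j\equiv c_j^{-1}\bmod d_j$ then decomposes into the corresponding prime-local inverses. The delicate point is to verify, term by term, that each rational expression in the argument of $\psi$, such as $u_{r-1}^2 c_r/d_r$ and $d_{r-1}^2 c_{r+1}^2 u_r/(d_{r+1}^2 d_r)$, decomposes modulo~$1$ into a sum indexed by primes once the CRT substitution is performed; this is a matter of tracking how each denominator splits and how the CRT idempotents interact with the inverses $u_j$. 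With the exponent so decomposed, the character sum factors into a product of prime-local character sums, and combining these with the inductive factorization of $H(D_2,\dots;\dots)$ yields the desired product formula.
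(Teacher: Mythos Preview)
Your proposal is correct and follows essentially the same approach as the paper: induction on $r$ using the inductive formula~\eqref{inductive-coeff}, with the Chinese Remainder Theorem to split the exponential sum into prime-local pieces. The paper's proof is only a sketch, pointing to \cite{FZ}, Ch.~5 and \cite{Br-Fr}, Section~6 for the details of the CRT manipulation you describe (and noting that in the present trivial-cover case the power residue symbols that would otherwise appear are all~$1$); your outline fills in exactly those details.
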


	\begin{proof}  This may be proved directly from \eqref{inductive-coeff} by an induction argument, making use of the Chinese remainder theorem (compare \cite{FZ}, Ch.~5, for the odd orthogonal case).
The basic argument is carried out in greater generality in
 \cite{Br-Fr}, Section 6, where it is proved that exponential sums arising in the Whittaker expansion of a maximal parabolic
metaplectic Eisenstein series on a covering group factor into prime power contributions
at the expense of certain power residue symbols.  By induction this gives a similar ``twisted multiplicativity" factorization
of $H(C_1,\dots,C_r;\mathbf m)$.  
In our case, the cover is trivial and hence the residues symbols are all identically $1$.  Thus $H$ factors into its prime power contributions.  
\end{proof}

On the other hand, after passing to the adeles using strong approximation, 
the global Whittaker coefficient $W_{\mathbf m}(f,\mathbf s)$ factors into local Whittaker coefficients given by $p$-adic integrals. 
In view of Corollary~\ref{factorization}, this local Whittaker coefficient may be expressed as 
the sum of the coefficients $H(p^{\gamma_{p,1}},\dots,p^{\gamma_{p,r}};p^{\nu_{p,1}},\dots,p^{\nu_{p,r}})$ (times powers of $p^{\gamma_{p,i}s_i}$), the sum
over $\gamma_{p,i}$.   But by the Casselman-Shalika formula \cite{C-S}, at almost all places $p$ this local Whittaker coefficient
is also a value of a character attached to an irreducible representation of a group of type B, whose highest weight depends on the $\nu_{p,j}$'s.
This will be the key to giving new combinatorial realizations of characters, but we need to say more.
 
Let $H$ be any connected reductive group defined over a non-archimedean local field $F$.  Let $B=TU$ be a Borel subgroup of $H$,
where $T$ is a maximal split torus and $U$ is the unipotent radical of $B$.
An unramified principal series for $H_F$
is a certain space of functions on $H_F$ (defined by its behavior under the left action of $B_F$, under which it transforms by a fixed character of $T_F$)
equipped with a right $H_F$-action.  Then the local Whittaker function is given by
$$W(h)=\int_{U^-_{F}} f(u h) \, \psi(u)\,du,$$
where $U^-$ is the opposite unipotent subgroup, $\psi$ is a smooth complex character of $U^-_{F}$,
and $f$ is the spherical vector.  (This converges for principal series attached to certain characters of $T_F$ and may be extended by analytic
continuation to the remaining cases.) If $h\in T_F$, the variable change $u\mapsto huh^{-1}$ allows one to remove $h$
at the expense of acting on the character $\psi$.  If we were to apply this with $H=\Sp_{2r}$, the action of $T_F$
would give only half the characters we consider, as the $\nu_{p,r}$ above would only change by even integers.
Thus we must apply the Casselman-Shalika formula on the group of symplectic similitudes $\GSp_{2r}$ (or on its quotient by the center)
in order to capture all the coefficients computed above.
Then the Casselman-Shalika formula expresses the Whittaker coefficients considered here
as values of characters of the irreducible representation $\chi_\lambda$ of the group $\Spin_{2r+1}(\C)$ of highest weight $\lambda$.  
These must be multiplied by a factor that accounts for the difference between normalized and unnormalized Whittaker functionals,
given by a deformed Weyl denominator.
Thus we obtain the following relation.  

Let $\lambda=\sum_{i=1}^r \lambda_i\epsilon_i$ be a dominant weight; hence
$\lambda+\rho=\sum_{i=1}^r \mu_i\epsilon_i$ with $\mu_i=\lambda_i+1$.
Let  $a_{0,i}=\mu_r+2\sum_{j=i}^{r-1}\mu_j$.  (The $a_{0,i}$ will be realized as
the top row of a set of GT patterns in Section~\ref{characters} below.)
Write $p^\lam=(p^{\lambda_1},\dots,p^{\lambda_r})$. 
\begin{proposition}\label{first-formula}  Let $q$ be a prime, and suppose that $z_iz^{-1}_{i+1}=q^{1-2s_i}$ for $1\leq i\leq r-1$ and $z_r=q^{1-2s_r}$. 
Then
$$
D_B(z;-q^{-1})\chi_{\lam}(z)
=\sum_{\bf k}H^\flat(p^{\bf k};p^{\lam})z_1^{k_1-\frac{a_{0,1}}{2}}z_2^{k_2-k_1-\frac{a_{0,2}}{2}}
\cdots z_r^{k_{r}-k_{r-1}-\frac{a_{0,r}}{2}}
$$
where 
 $H^{\flat}(p^{\bf k};p^{\lam})=q^{-\sum_{i=1}^r k_i} H(p^{\bf k};p^{\lam})$.
Here the sum is over ${\bf k}=(k_1,\dots,k_r)$ in $\Z^r_{\geq0}$,
and $p^{\bf k}:=(p^{k_1},\dots,p^{k_r})$.
\end{proposition}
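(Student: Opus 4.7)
The strategy is to pass to the adelic/local picture and apply Casselman--Shalika. By Corollary~\ref{factorization}, $H(C_1,\dots,C_r;\mathbf m)$ is twisted-multiplicative, so we may isolate a single prime $p=q$ and study the local generating series
$$
\sum_{\mathbf k\in\Z^r_{\geq 0}}\frac{H(p^{\mathbf k};p^{\boldsymbol\lambda})}{p^{2k_1s_1}\cdots p^{2k_rs_r}}.
$$
The text just before the Proposition tells us precisely what this series is: passing to the adeles and unfolding the global Whittaker integral, this generating series coincides, up to the Jacquet--Whittaker normalization, with the spherical unramified Whittaker function of the induced representation $I(\mathbf s)$ evaluated on the torus element whose Langlands parameter is controlled by $p^{\boldsymbol\lambda}$. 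So the plan is to identify the right side of the proposition with that local Whittaker function and to identify the left side with its Casselman--Shalika evaluation.

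The first substantive step is the observation made in the excerpt that one cannot directly apply Casselman--Shalika on $\Sp_{2r}$, because conjugation of $\psi_{\mathbf m}$ by the torus of $\Sp_{2r}$ only shifts $m_r$ by an \emph{even} amount, so half the highest weights $\lambda$ (those with odd $\lambda_r$) would be invisible. The fix is to view the same integral as the local Whittaker integral for $\GSp_{2r}$ (or $\PGSp_{2r}$), whose similitude factor supplies the missing torus action and whose dual group is again $\Spin_{2r+1}(\C)$. With this enlargement, every dominant weight $\lambda$ of $\Spin_{2r+1}(\C)$ arises, and the Satake parameters of the induced representation are exactly the $z_i$ with $z_iz_{i+1}^{-1}=q^{1-2s_i}$ for $i<r$ and $z_r=q^{1-2s_r}$, since these are the values of the inducing character on the coroots of $\Sp_{2r}$ and hence on the simple roots of the dual group.

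Next I apply the Casselman--Shalika formula \cite{C-S} for the resulting unramified principal series of $\GSp_{2r}$: the value of the \emph{normalized} spherical Whittaker function at $p^{\boldsymbol\lambda}$ equals $\chi_\lambda(z)$. The normalization factor that relates the normalized Whittaker functional to the unnormalized one used in Theorem~\ref{thm1} is the usual Macdonald product
$$
\prod_{\alpha^\vee>0}\bigl(1-q^{-1}z^{\alpha^\vee}\bigr),
$$
taken over the positive coroots of $\Sp_{2r}$, i.e.\ the positive roots of $\Spin_{2r+1}(\C)$. Inserting the specialization above and multiplying through, this product is exactly the deformed Weyl denominator $D_B(z;-q^{-1})$ written in the introduction, once the factor $\prod_i z_i^{-(2(r-i)+1)/2}$ attached to $\rho$ is absorbed. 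Thus Casselman--Shalika gives $D_B(z;-q^{-1})\chi_\lambda(z)$ as a sum over $\mathbf k$ of the unnormalized Whittaker values times monomials $z^{\mu(\mathbf k)}$.

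The remaining task is purely bookkeeping, but it is where the main technical care is needed: to show that the monomial $z^{\mu(\mathbf k)}$ produced by the Casselman--Shalika side is precisely $z_1^{k_1-a_{0,1}/2}z_2^{k_2-k_1-a_{0,2}/2}\cdots z_r^{k_r-k_{r-1}-a_{0,r}/2}$, and that the coefficient is $H^\flat(p^{\mathbf k};p^{\boldsymbol\lambda})=q^{-\sum k_i}H(p^{\mathbf k};p^{\boldsymbol\lambda})$. The exponents of the $z_i$ come from translating between the coordinates $(C_1,\dots,C_r)$ used to parametrize the Whittaker coefficients in Theorem~\ref{thm1} (via $\mathbf C=d(C_1^{-1},C_2^{-1}C_1,\dots)$) and the dual torus coordinates $z_i$, with the half-integer shifts $a_{0,i}/2$ arising from the $\delta^{1/2}$ that distinguishes the Whittaker integral from its $\delta$-less version, and the overall $q^{-\sum k_i}$ arising from the factor $\delta^{1/2}$ evaluated at $p^{\mathbf k}$. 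This last matching---verifying that the exponents line up with $a_{0,i}=\mu_r+2\sum_{j=i}^{r-1}\mu_j$ and that the signs are correct---is the main obstacle; everything else is an application of standard results. With these matchings in hand, comparing the two expressions for the local Whittaker function yields the stated identity.
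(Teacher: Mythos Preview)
Your proposal is correct and follows essentially the same approach as the paper, which presents the argument in the discussion immediately preceding the Proposition rather than in a formal proof block: factorize via Corollary~\ref{factorization}, pass to the adeles, note the $\Sp_{2r}$ versus $\GSp_{2r}$ subtlety for the torus action on $\psi_{\mathbf m}$, apply Casselman--Shalika on $\GSp_{2r}$ with dual group $\Spin_{2r+1}(\C)$, and identify the normalization factor with the deformed Weyl denominator $D_B(z;-q^{-1})$. Your write-up is in fact more explicit than the paper's about the exponent bookkeeping and the $\delta^{1/2}$ origin of the $q^{-\sum k_i}$ factor, but there is no methodological difference.
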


\section{The Sum at $p$: $G(\Ft)$}\label{sec:sumatp}

Our goal is to describe the 
coefficients $H$ at powers of $p$.  
Below the letters $m_{i}$, $d_{i}$, $L_i$  denote non-negative integers.  (They  may be regarded
as the orders at $p$ of the corresponding integers in the prior section.)  
In particular
$$L_i=
\begin{cases}\sum^i_{j=1}d_j&\mbox{when $1\leq i\leq r-1$}\\
d_r+2\sum^{r-1}_{j=1}d_j &\text{when $i=r$}\\
d_r-d_{i'-1}+\sum^{i'}_{j=1}d_{r+j}+\sum^{r-1}_{j=1}d_j&\text{when $i=r+i'$, $1\leq i'\leq r-1$}
\end{cases}$$
with  $d_0=0$.
Broadly speaking, we apply the approach in \cite{BBF11, FZ}, and use language comparable to those cases.
Let $\Ft=(d_{1},d_{2},\dots,d_{2r-1})$.

We will describe the support and give a preliminary combinatorial version of the following sum. 
Let
\begin{multline*}
G(\Ft)=\prod^{2r-1}_{k=1}p^{-L_k}\cdot\hspace{-1em}\sum_{\substack{c_{j}\bmod p^{L_{j}}, 1\leq j\leq 2r-1\\{(c_j,p)=1 \text{~if $d_j>0$}}}}\!\!
\psi\left(p^{m_{1}}\frac{c_{1}}{p^{d_{1}}}+p^{m_{2}}(\frac{u_{1}c_{2}}{p^{d_{2}}}-\frac{p^{d_{1}}c_{2r-1}u_{2r-2}}{p^{d_{2}+d_{2r-1}}})+\cdots\right.\\
\left.+p^{m_{r-1}}(\frac{u_{r-2}{c_{r-1}}}{p^{d_{r-1}}}-\frac{p^{d_{r-2}}c_{r+2}u_{r+1}}{p^{d_{r-1}+d_{r+2}}})
+p^{m_{r}}(\frac{u^{2}_{r-1}c_{r}}{p^{d_{r}}}+2\frac{p^{d_{r-1}}u_{r-1}c_{r+1}}{p^{d_{r+1}+d_{r}}}+\frac{p^{2d_{r-1}}c_{r+1}^{2}u_{r}}{p^{2d_{r+1}+d_{r}}})\right).
\end{multline*}
Here we suppose that
\begin{align*}
d_{j+1}&\leq m_{j+1} + d_{j}~\text{for}~1\leq j\leq r-2  \\
d_{j+1}+d_{2r-j}&\leq m_{j+1} +d_{j}+ d_{2r-j-1}~\text{for}~1\leq j\leq r-2,\,\,\ 2d_{r+1}\leq m_{r}+2d_{r-1},
\end{align*}
and we set $u_j=0$ if $d_j=0$.

Let $\mu$ be the $r$-tuple of positive integers given by $\mu_{i}=m_{i}+1$ for $1\leq i\leq r$.
Define $CQ^{\leq}_{1}(\mu)$ (the first set of combinatorial quantities attached to $\mu$)
to be the set of $(2r-1)$-tuples $\Ft=(d_1,\dots,d_{2r-1})$ of non-negative integers satisfying the inequalities
\begin{equation}\label{ineq:CQ-d}
\begin{cases}
d_{i}\leq \mu_{i}& 1\leq i\leq r,\\
d_{r}\leq \mu_{r}+2(d_{r-1}-d_{r+1})&\\
d_{2r-i}\leq \mu_{i+1}+d_{i}-d_{i+1}& 1\leq i\leq r-2.
\end{cases}
\end{equation} 
Let $CQ_{1}(\mu)$ be the set of $(2r-1)$-tuple of non-negative integers satisfying~\eqref{ineq:CQ-d} except possibly for the middle inequality for $d_{r}$. 
By analogy with \cite{BBF11} and \cite{FZ}, we call $\Ft$ a {\it short pattern}.

Let $\Ft$ be in $CQ_{1}(\mu)$.
The {\it weighting vector} $k(\Ft)=(k_{1}(\Ft),\dots,k_{r}(\Ft))$ is defined by
$$
k_{i}(\Ft)=\sum^{2r-1}_{j=i}d_{j}+\sum^{i-1}_{j=1}d_{2r-j} \text{ for } 1\leq i\leq r.
$$
When the choice of $\Ft$ is clear, we write $k_{i}$ in place of $k_{i}(\Ft)$.
Let
$$
\Delta(\Ft)=(\Fc_{1},\dots,{\Fc}_{2r-1})
$$
with
$
\Fc_{i}=\sum^{2r-1}_{j=i}d_{i} \text{ for } 1\leq i\leq 2r-1.
$
Thus, $\Fc_{i}-\Fc_{i+1}=d_{i} \text{ for } 1\leq i\leq 2r-1$ and
$\Fc_{i}+{\Fc}_{2r-i+1}=k_{i}$ for $1\leq i\leq r$ (with $\Fc_{2r}=0$ for convenience).  
A short pattern $\Ft$ is called {\it maximal} in $CQ_{1}(\mu)$ if $d_i=d_{2r-i}=\mu_i$ for $1\leq i\leq r$.

We will attach algebraic quantities to the entries of $\Ft$.  This is analogous to the
association of number theoretic quantities to string data in 
 \cite{BBF5}, \cite{FZ}, with the proviso that Gauss sums are all evaluable in this case.  
To keep track of the different possibilities that turn out to arise, as in \cite{BBF5}, \cite{FZ}
we `decorate,' or add circles or boxes, to the entries of $\Delta(\Ft)$ as follows:
\begin{enumerate}
\item  The entry $\Fc_{i}$  is {\it circled} if $d_{i}=0$.
\item The entry $\Fc_{i}$ is {\it boxed} if an equality holds in the upper-bound inequalities~\eqref{ineq:CQ-d}, as follows.
When $1\leq i\leq r$ the entry $\Fc_{i}$ is boxed if $d_{i}=\mu_{i}$;
the entry ${\Fc}_{r+1}$ is boxed if $d_{r}=\mu_{r}+2(d_{r-1}-d_{r+1})$;
when $1\leq i\leq r-2$ the entry ${\Fc}_{2r-i}$ is boxed if $d_{2r-i}=\mu_{i+1}+d_{i}-d_{i+1}$.
\end{enumerate}
For the entry $a=\Fc_{i}$ ($1\leq i\leq 2r-1$), set
$$
\gamma(a)=\begin{cases}
1-q^{-1}& \text{ if $a$ is neither boxed nor circled,}\\
-q^{-1}& \text{ if $a$ is boxed but not circled,}\\
1& \text{ if $a$ is circled but not boxed,}\\
0& \text{ if $a$ is both boxed and circled.}
\end{cases}
$$
Define $G_{\Delta}(\Ft)=\prod^{2r-1}_{i=1}\gamma(\Fc_{i})$.
It is easy to check that that $G_{\Delta}(\Ft)$ is supported in $CQ^{\leq}_{1}(\mu)$.

We now have the following evaluation of the exponential sum $G(\Ft)$ in terms of $G_\Delta(\Ft)$.

\begin{proposition}\label{pro:G}
\begin{enumerate}
\item 
If $\Ft$ is not in $CQ_{1}(\mu)$, then $G(\Ft)=0$.

\item If $\Ft$ is in $CQ_{1}^{\leq}(\mu)$, then $G(\Ft)=G_{\Delta}(\Ft)$.

\item If $d_{r-1}=d_{r+1}$ and $d_{r}-\mu_{r}$ is a positive odd integer, then
$$
G(\Ft)=(1-q^{-1})q^{-\frac{(d_{r}-\mu_{r}+1)}{2}}\prod_{\substack{1\leq i\leq 2r-1\\ i\ne r, r+1}}\gamma(\Fc_{i}),
$$
where $\Fc_{i}$ is the decorated entry in $\Delta(\Ft)$;
if $d_{r-1}=d_{r+1}$ and $d_{r}-\mu_{r}$ is a positive even integer, then, $G(\Ft)=0$.
\end{enumerate}

\end{proposition}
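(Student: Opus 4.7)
The plan is to evaluate the multivariable exponential sum $G(\Ft)$ by iteratively summing over the variables $c_j$, in an order adapted to the structure of the exponents, in the style of the Bruhat-type analysis of \cite{BBF5} and \cite{FZ}. The variables $c_j$ and $u_j=c_j^{-1}$ couple between adjacent positions via the linear character arguments, and at the middle $m_r$ block the coupling becomes quadratic in $c_{r+1}$.

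I would first handle the two outer chains by a careful variable-by-variable summation. Each position is reduced to a classical Ramanujan sum
\[
\sum_{c\bmod p^L,\,(c,p)=1}\psi(c/p^e)=\begin{cases}(1-q^{-1})p^L&e\leq 0,\\-q^{-1}p^L&e=1,\\0&e\geq 2,\end{cases}
\]
together with the analogous value $p^L$ when the coprimality constraint is dropped (the circled $d_j=0$ case). ``Adjacent boxed'' positions (with $d_j=\mu_j$ and $d_{j+1}=\mu_{j+1}$), where naive single-variable summation would give a Kloosterman sum, are handled by swapping the order of summation and exploiting the fact that $\sum_{c,(c,p)=1}\psi(uc/p)=-p^{L-1}$ is independent of the unit $u$, so the two sums factorize and give $(-q^{-1})^2=q^{-2}$. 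Matching $e$ with the $p$-adic valuation of the coefficient of $c_j$ reproduces the decoration rule: strict inequality in \eqref{ineq:CQ-d} yields $e\leq 0$ (generic, $1-q^{-1}$); equality yields $e=1$ (boxed, $-q^{-1}$); $d_j=0$ is circled ($1$); and violation of the upper bound forces $e\geq 2$ (zero). Iterating inward on both chains, the outer contribution factorizes as $\prod_{j\neq r,r+1}\gamma(\Fc_j)\,p^{L_j}$, establishing (1) at the outer positions.

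Next I would treat the middle block. Using $u_rc_r\equiv 1\bmod p^{d_r}$, the completing-the-square identity
\[
u_{r-1}^2c_r+2p^{d_{r-1}-d_{r+1}}u_{r-1}c_{r+1}+p^{2(d_{r-1}-d_{r+1})}u_rc_{r+1}^2=p^{2(d_{r-1}-d_{r+1})}u_r\bigl(c_{r+1}+p^{d_{r+1}-d_{r-1}}u_{r-1}c_r\bigr)^2
\]
reduces the middle block to $p^{e''}u_rx^2$ with $e''=m_r+2(d_{r-1}-d_{r+1})-d_r$ and $x=c_{r+1}+p^{d_{r+1}-d_{r-1}}u_{r-1}c_r$. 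For $\Ft\in CQ_1^{\leq}(\mu)$ the effective denominator exponent $e'=-e''$ satisfies $e'\leq 1$: when $e'\leq 0$ the $x$-sum is trivial and produces the generic or circled contribution; when $e'=1$ (boxed boundary) the resulting mod-$p$ quadratic Gauss sum combined with the subsequent $c_r$-sum produces cancellation via the Legendre orthogonality $\sum_{c_r\bmod p}\bigl(\tfrac{c_r}{p}\bigr)=0$, and only the excluded-class Ramanujan contribution survives, delivering $\gamma(\Fc_r)\gamma(\Fc_{r+1})$. This assembles with the outer factor to give (2).

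For part (3), in the resonant case $d_{r-1}=d_{r+1}$ with excess $k:=d_r-\mu_r\geq 1$, one has $e'=k+1\geq 2$, and the $x$-sum becomes the classical quadratic Gauss sum $g(u_r;p^{k+1})$ modulo $p^{k+1}$. For $k$ odd ($k+1$ even), $g(u_r;p^{k+1})=\pm p^{(k+1)/2}$ is rational, and careful bookkeeping of the excluded class together with the $c_r$-sum yields the claimed factor $(1-q^{-1})q^{-(k+1)/2}$. For $k$ even ($k+1$ odd), $g(u_r;p^{k+1})$ is proportional to $\bigl(\tfrac{u_r}{p}\bigr)\sqrt{p}$, and the orthogonality $\sum_{(c_r,p)=1}\bigl(\tfrac{c_r}{p}\bigr)=0$ forces $G(\Ft)=0$. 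The main obstacle will be the bookkeeping around the adjacent-boxed and resonant cases: tracking the parity of the quadratic Gauss sum modulus, keeping careful account of the excluded-class contribution arising from the substitution $x\mapsto c_{r+1}+p^{d_{r+1}-d_{r-1}}u_{r-1}c_r$, and verifying that the Legendre cancellations proceed exactly as in the double-cover quadratic Gauss sum computations of \cite{FZ}.
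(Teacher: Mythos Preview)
Your treatment of the two outer chains is essentially the paper's: iterated Ramanujan sums, with the order-swap device handling adjacent boxed positions, producing $\prod_{i\neq r,r+1}\gamma(\Fc_i)$ and the vanishing outside $CQ_1(\mu)$.

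The middle block is where your route diverges. The paper does \emph{not} complete the square. Instead it makes the unit substitution $c_r\mapsto c_{r-1}c_rc_{r+1}$ (valid when $d_{r-1},d_{r+1}>0$), which turns the argument into
\[
\psi\Bigl(\frac{p^{m_r}u_{r-1}u_rc_{r+1}(c_r+1)^2}{p^{d_r}}\Bigr).
\]
Now the dependence on $c_{r+1}$ is \emph{linear}, so the $c_{r+1}$-sum is a Ramanujan sum depending only on $\ord_p(c_r+1)$. One then stratifies the $c_r$-sum by $m=\ord_p(c_r+1)$, and each stratum is an elementary geometric computation; no quadratic Gauss sums or Legendre symbols ever enter. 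This gives $\gamma(\Fc_r)\gamma(\Fc_{r+1})$ for $d_r\le\mu_r$, the factor $(1-q^{-1})q^{-(d_r-\mu_r+1)/2}$ for $d_r-\mu_r$ odd positive, and $0$ for $d_r-\mu_r$ even positive, uniformly and with no case analysis on $\text{sgn}(d_{r-1}-d_{r+1})$.

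Your completing-the-square approach is plausible and thematically nice, but as written it has two loose ends. First, the identity you state with $x=c_{r+1}+p^{d_{r+1}-d_{r-1}}u_{r-1}c_r$ assumes $d_{r+1}\ge d_{r-1}$; for $d_{r-1}>d_{r+1}$ the squared variable is instead $p^{d_{r-1}-d_{r+1}}c_{r+1}+u_{r-1}c_r$, and the coprimality bookkeeping changes. Second, and more seriously, in the resonant even-$k$ case your Legendre orthogonality $\sum_{(c_r,p)=1}(\tfrac{c_r}{p})=0$ kills only the full-$x$ Gauss sum term. After the shift, the constraint $(c_{r+1},p)=1$ becomes $x\not\equiv u_{r-1}c_r\bmod p$, a $c_r$-dependent excluded class; writing $x=u_{r-1}c_r+py$ one finds the excluded contribution
\[
\psi\!\Bigl(\tfrac{c_ru_{r-1}^2}{p^{k+1}}\Bigr)\sum_{y}\psi\!\Bigl(\tfrac{2u_{r-1}y}{p^{k}}+\tfrac{u_ry^2}{p^{k-1}}\Bigr),
\]
which still depends on $c_r$ through $u_r$ and is not killed by the Legendre orthogonality. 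You would need to evaluate this recursively (another completing-the-square) and show the pieces cancel; the same recursion is needed for odd $k$ to see that the excluded class contributes nothing beyond the main term. This can likely be done, but it is precisely the ``main obstacle'' you flag, and the paper's unit substitution bypasses it completely.
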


\begin{proof}
This follows from a long but elementary explicit calculation of the exponential sums. 
First, we evaluate the sum over $c_{i}\bmod p^{L_{i}}$ for $r+2\leq i\leq 2r-1$. 
Similar to the calculations in \cite{BBF5} and \cite{FZ}, it is straightforward to obtain the term $\prod^{2r-1}_{i=r+2}\gamma(\Fc_{i})$.

Next, we compute the terms containing the long root of length 2.  Let
$$
G_{2}(\Ft):=q^{-L_{r}-L_{r+1}}\sideset{}{'}\sum_{\substack{c_{r}\bmod p^{L_{r}}\\ c_{r+1}\bmod p^{L_{r+1}}}}
\psi\left( p^{m_{r}}\ppair{\frac{u^{2}_{r-1}c_{r}}{p^{d_{r}}}+2\frac{p^{d_{r-1}}u_{r-1}c_{r+1}}{p^{d_{r}+d_{r+1}}}+\frac{p^{2d_{r-1}}c_{r+1}^{2}u_{r}}{p^{d_r+2d_{r+1}}}}\right)
$$
where the prime indicates that $(c_r,p)=1$ if $d_r>0$ and that $(c_{r+1},p)=1$ if $d_{r+1}>0$.
We suppress the possible dependence of this sum on $u_{r-1}$ from the notation.  The point is that since this sum involves squares, we must handle it
differently.  

If $d_{r-1}=d_{r+1}=0$ and $d_{r}\ne 0$, then  we may set $u_{r-1}=u_{r+1}=0$.  Thus 
\begin{align*}
q^{L_{r}+L_{r+1}}G_{2}(\Ft)=&  \sum_{\substack{c_{r}\bmod p^{L_r}\\(c_{r},p)=1}}
\,\sum_{ \ c_{r+1}\bmod p^{L_{r+1}}} \psi(c^{2}_{r+1}u_{r}p^{m_{r}-d_{r}})\\
=&\sum^{L_{r+1}-1}_{m=0}\sum_{\substack{c'_{r+1}\bmod p^{L_{r+1}-m}\\(c'_{r+1},p)=1}}\sum_{\substack{c_{r}\bmod p^{L_{r}}\\(c_{r},p)=1}} \psi(c'^{2}_{r+1}u_{r}p^{2m+m_{r}-d_{r}})
+\sum_{\substack{c_{r}\bmod p^{L_{r}}\\(c_{r},p)=1}} 1.
\end{align*}
These sums are easily evaluable. When $d_{r}>\mu_{r}$ and $d_{r}\not\equiv \mu_{r}\bmod{2}$, we get
\begin{align*}
q^{L_{r}+L_{r+1}}G_{2}(\Ft)=&\sum^{L_{r+1}-1}_{m=\frac{d_{r}-\mu_{r}+1}{2}}(1-q^{-1})q^{L_{r+1}-m}\cdot q^{L_{r}}(1-q^{-1})+q^{L_{r}}(1-q^{-1})\\
=&q^{L_{r}+L_{r+1}-\frac{(d_{r}-\mu_{r}+1)}{2}}(1-q^{-1}).
\end{align*}
Similarly, when $d_{r}\geq \mu_{r}$ and $d_{r}\equiv \mu_{r}\bmod{2}$, we get instead
\begin{align*} 
&q^{L_{r}+L_{r+1}}G_{2}(\Ft)\\
=&q^{L_{r+1}}(1-q^{-1})\cdot\Big(q^{-\frac{d_{r}-\mu_{r}}{2}}q^{L_{r}}(-q^{-1})+
\sum^{L_{r+1}-1}_{m=\frac{d_{r}-\mu_{r}}{2}+1} q^{-m}\cdot q^{L_{r}}(1-q^{-1})\Big)\\
&\qquad
+q^{L_{r}}(1-q^{-1})\\
=&0.
\end{align*}
Finally, when $0\leq d_{r}<\mu_{r}$, it is easy to verify that $G_{2}(\Ft)= \gamma(\Fc_{r})\gamma(\Fc_{r+1})$.

Next, suppose that $d_{r-1}=d_{r+1}\ne 0$ and $d_{r}\ne 0$. 
After changing $c_{r}$ to $c_{r-1}c_{r}c_{r+1}$, we have
$$
G_{2}(\Ft)=q^{-L_{r}-L_{r+1}} \sum_{\substack{c_{r}\bmod p^{L_{r}}\\ (c_{r},p)=1}}  \sum_{\substack{c_{r+1}\bmod p^{L_{r+1}}\\ (c_{r+1},p)=1}}
\psi\left( \frac{p^{m_{r}}u_{r-1}u_{r}c_{r+1}}{p^{d_{r}}}(c_{r}+1)^{2}\right).
$$
In order to calculate $G_{2}(\Ft)$, we split the sum over $c_{r}$ into the following three pieces: (i) $(c_{r}+1,p^{L_{r}})=1$; (ii) $(c_{r}+1,p^{L_{r}})=p^{m}$ for $0<m<L_{r}$; (iii) $(c_{r}+1,p^{L_{r}})=p^{L_{r}}$. 
Denote the summation over each piece by $G_{2,i}(\Ft)$, $G_{2,ii}(\Ft)$ and $G_{2,iii}(\Ft)$ respectively.
First, 
$$
G_{2,i}(\Ft)=q^{-L_{r}}\gamma(\Fc_{r+1}) \sum_{\substack{c_{r}\bmod p^{L_{r}}\\ (c_{r},p^{L_{r}})=(c_{r}+1,p^{L_{r}})=1}}1 
=\gamma(\Fc_{r+1})(1-2q^{-1}).
$$
Second,  for $(c_{r}+1,p^{L_{r}})=p^{m}$ with $0<m<L_{r}$, write $c'_{r}=p^{-m}(c_{r}+1)$.  Then
$$
G_{2,ii}(\Ft)=q^{-L_{r}-L_{r+1}}\sum^{L_{r}-1}_{m=1}
\sum_{\substack{c'_{r}\bmod p^{L_{r}-m}\\ (c'_{r},p^{L_{r}-m})=1}}
\sum_{\substack{c_{r+1}\bmod p^{L_{r+1}}\\ (c_{r+1},p^{L_{r+1}})=1}}
\psi\ppair{\frac{p^{m_{r}+2m}u_{r-1}u_{r}c_{r+1}c'^{2}_{r}}{p^{d_{r}}}}.
$$
Similarly to the above, we consider separately the three cases $d_{r}>\mu_{r}$ and $d_{r}\not\equiv \mu_{r}\bmod{2}$; 
$d_{r}> \mu_{r}$ and $d_{r}\equiv \mu_{r}\bmod{2}$; $d_{r}\leq \mu_{r}$. After a short calculation, one finds that 
$$
G_{2,ii}(\Ft)=\begin{cases}
(1-q^{-1})(q^{}-q^{-L_{r}})& \text{ if $d_{r}>\mu_{r}$ and $d_{r}\not\equiv \mu_{r}\bmod{2}$} \\
-q^{-L_{r}}(1-q^{-1})& \text{ if $d_{r}> \mu_{r}$ and $d_{r}\equiv \mu_{r}\bmod{2}$}\\
(1-q^{-1})(q^{-\frac{(d_{r}-\mu_{r}+1)}{2}}-q^{-L_{r}})& \text{ if $d_{r}\leq \mu_{r}$}.
\end{cases}
$$
Third, for the piece $(c_{r}+1,p^{L_{r}})=p^{L_{r}}$, 
$$
G_{2,iii}(\Ft)=q^{-L_{r}-L_{r+1}}\sum_{\substack{c_{r+1}\bmod p^{L_{r+1}}\\ (c_{r+1},p^{L_{r+1}})=1}}1=q^{-L_{r}}(1-q^{-1}).
$$
Combining the formulas, we obtain
\begin{align*}
G_{2}(\Ft)=&G_{2,i}(\Ft)+G_{2,ii}(\Ft)+G_{2,iii}(\Ft)\\
=&\begin{cases}
\gamma(\Fc_{r})\gamma(\Fc_{r+1})& \text{ if $d_{r}\leq \mu_{r}$}\\
0& \text{ if $d_{r}> \mu_{r}$ and $d_{r}\equiv \mu_{r}\bmod{2}$}\\
(1-q^{-1})q^{-\frac{d_{r}-\mu_{r}+1}{2}} & \text{ if $d_{r}>\mu_{r}$ and $d_{r}\ne \mu_{r}\bmod{2}$.}
\end{cases}
\end{align*}

In the remaining cases, the evaluation of $G_{2}(\Ft)$ is straighforward.
We then compute the remaining summands over $c_i$ with $i<r$ by an elementary computation.  The Proposition follows.
\end{proof}

\section{The Totally Resonant Case} \label{sec:T-R}
In this section, we will consider  {\it  totally resonant} short patterns $\Ft$.  By definition, these are the ones
satisfying $d_{i}=d_{2r-i}$ for all $i$, or equivalently $k_{i}=k_{i+1}$ for $1\leq i\leq r-1$.  When $\Ft$ is totally
resonant we only record $(d_1,\dots,d_r)$ since the remaining $d_j$ are determined from this data.
A totally resonant short pattern $\Ft$ in $CQ_{1}(\mu)$ is maximal if $\Ft=\mu$. 

For  $1\leq i\leq r$, define the sets
$$
\Omega^{\land}_{i}(\mu)=\{(d_{1},\dots,d_{r})\in \Z^{r}_{\geq 0}\mid d_{j}\leq \mu_{j}\text{ for }j\leq i-1,
d_{i}\land\mu_{i} \text{ and } d_{j}=\mu_{j} \text{ for }j\geq i+1\},
$$
 where $\land$ may be replaced by $<$, $=$, $\geq$ and so on. 
Let $\Omega_{i}(\mu)=\Omega^{<}_{i}(\mu)\cup\Omega^{\geq}_{i}(\mu)$.
For each $\Ft=(d_{1},\dots,d_{r})$ in $\Omega^{\land}(\mu)$, define the {\it weightings} of Type A and Type B, 
$k_{A}(\Ft)=\sum^{r}_{i=1}d_{i}$ and $k_{B}(\Ft)=d_{r}+2\sum^{r-1}_{i=1}d_{i}$.
Let $\Omega^{\land}_{i,R}(\mu,k)$ be the subset of $\Omega^{\land}(\mu)$ consisting of elements $\Ft$ such that $k_{R}(\Ft)=k$ where $R$ is $A$ or $B$.
For convenience we write $\Omega^{
 \land}(\mu):=\Omega^{\land}_{r}(\mu)$ and $\Omega^{\land}_{R}(\mu,k):=\Omega^{\land}_{r,R}(\mu,k)$.
For each non-maximal totally resonant short pattern $\Ft=(d_{1},d_{2},\dots,d_{r})\in \Omega(\mu)$,
let $i_{\Box}(\Ft)$ or simply $i_{\Box}$ denote the quantity $\max\{i\mid 1\leq i\leq r, d_{i}< \mu_{i}\}$.
Define $\Omega(\Ft)$ to be  the subset of $\Omega^{\leq}(\mu)$ 
such that for each $(x_{1},\dots,x_{r})\in \Omega(\Ft)$,
$x_{i}\leq d_{i}$ for $i_{\Box}(\Ft)\leq i\leq r$ and $x_{i}=d_{i}$ for $1\leq i< i_{\Box}(\Ft)$. 
If $\Ft$ is maximal, let $\Omega(\Ft)=\Omega^{\leq}(\mu)$.
Also, if $\Fs=(b_{1},\dots,b_{r})$ is a short pattern, 
let $\Fs_{i}=(b_{1},\dots,b_{i})$ denote the first $i$ components of  $\Fs$.

In order to handle the contribution outside $CQ_{1}^{\leq}(\mu)$, we introduce the following lemma.
\begin{lemma}\label{pro:Omega-G-A}
Let $\Fs=(b_{1},b_{2},\dots,b_{r})$ be a short pattern in $\Omega^{\leq}(\mu)$. 
\begin{enumerate}
\item
If $\Fs$ is maximal, then $\sum_{\Ft\in\Omega(\Fs)}G_{\Delta}(\Ft)q^{k_{A}(\Ft)}=0$.
\item
If $\Fs$ is not maximal, then
\begin{align}\label{eq:Omega-G-A}
\sum_{\Ft\in\Omega(\Fs)}G_{\Delta}(\Ft)q^{k_{A}(\Ft)}
=& q^{k_{A}(\Fs)-(r-i_{\Box})}G_{\Delta}(\Fs_{i_{\Box}})\begin{cases}
(1-q^{-1})^{-1}  & \text{ if $b_{i_{\Box}}>0$,}\\
1 & \text{ if $b_{i_{\Box}}=0$}.
\end{cases}
\end{align}
Here $i_{\Box}=i_{\Box}(\Fs)$ and $\Fs_{i_{\Box}}$ is in $\Omega^{\leq}((\mu_{1},\dots,\mu_{i_{\Box}}))$.
In particular, if $r=1$, then
$$
\sum_{\Ft\in\Omega(\Fs)}G_{\Delta}(\Ft)q^{k_{A}(\Ft)}=\begin{cases}
0 & \text{ if $\Fs$ is maximal,}\\
q^{k_{A}(\Fs)}   & \text{ otherwise.}
\end{cases}
$$
\end{enumerate}
\end{lemma}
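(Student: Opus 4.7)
The plan is to induct on $r$, taking the base case $r = 1$ as the explicit formula at the end of the lemma statement (a direct geometric series collapse in the single-variable sum). For the induction step with $r \geq 2$, my first move is to isolate the summation over the last coordinate $x_r$. Inspecting the decoration rules in Section~\ref{sec:sumatp} in the totally resonant setting $d_{2r-j} = d_j$ reveals that the boxed/circled status of every entry $\Fc_i$ with $i \notin \{r, r+1\}$ depends only on $(x_1, \dots, x_{r-1})$, and that these conditions match (after an index shift by $2$ past the middle) the decorations of the $(r-1)$-dimensional short pattern on $(x_1, \dots, x_{r-1})$ with parameters $\mu' := (\mu_1, \dots, \mu_{r-1})$. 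Hence
\[
\sum_{\Ft \in \Omega(\Fs)} G_\Delta(\Ft)\, q^{k_A(\Ft)}
\;=\; \sum_{(x_1, \dots, x_{r-1})} G_\Delta^{(r-1)}(x_1, \dots, x_{r-1})\, q^{x_1 + \cdots + x_{r-1}}\, I(x_{r-1}),
\]
where $I(e) := \sum_{x_r = 0}^{X} \gamma(\Fc_r)\,\gamma(\Fc_{r+1})\, q^{x_r}$, with $X = \mu_r$ when $b_r = \mu_r$ and $X = b_r$ when $i_{\Box}(\Fs) = r$.

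Next I would evaluate $I(e)$ by splitting on whether $e = 0$, which toggles the circled status of $\Fc_{r+1}$. A short geometric-series calculation gives the crucial uniformity $I(e) = q^{\mu_r - 1}$ whenever $X = \mu_r$, independent of $e$. In the truncated regime $X = b_r < \mu_r$ the answer becomes $e$-dependent: $I(e) = (1 - q^{-1}) q^{b_r}$ for $e > 0$, and $I(e) = q^{b_r}$ for $e = 0$, with the natural degenerations when $b_r = 0$.

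With $I(e)$ in hand, the induction closes in three cases. When $\Fs$ is maximal, $I(e) = q^{\mu_r - 1}$ pulls out of the outer sum, and what remains is the maximal $(r-1)$-dimensional sum, which vanishes by induction. When $\Fs$ is non-maximal with $i_{\Box}(\Fs) < r$, the same factorization reduces to the non-maximal $(r-1)$-dimensional sum for $\Fs' := (b_1, \dots, b_{r-1})$ (which has $i_{\Box}(\Fs') = i_{\Box}(\Fs)$); the induction hypothesis then gives the claimed shape with exponents balancing via $k_A(\Fs') + \mu_r - 1 - (r - 1 - i_{\Box}) = k_A(\Fs) - (r - i_{\Box})$. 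Finally, when $i_{\Box}(\Fs) = r$, the outer sum collapses to the single term $(x_1,\dots,x_{r-1}) = (b_1, \dots, b_{r-1})$, and I compare $I(b_{r-1})$ with the factor $\gamma(\Fc_r)(b_r)\,\gamma(\Fc_{r+1})(b_r, b_{r-1})$ that promotes the remaining outer product into $G_\Delta(\Fs_{i_{\Box}}) = G_\Delta(\Fs)$; the quotient evaluates to $q^{b_r}(1 - q^{-1})^{-1}$ when $b_r > 0$ and to $1$ when $b_r = 0$, exactly the factor appearing on the RHS.

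The main obstacle I anticipate is the careful case analysis needed to establish the evaluation of $I(e)$. The key structural fact is the uniformity $I(e) = q^{\mu_r - 1}$ in the untruncated regime $X = \mu_r$: without this independence of $e$, the outer sum would not decouple cleanly into the $(r-1)$-dimensional setting, and the induction would stall. In the truncated regime $X = b_r < \mu_r$, the residual $e$-dependence of $I(e)$ must cancel precisely against the $e$-dependence of $\gamma(\Fc_{r+1})(b_r, e)$ already built into $G_\Delta(\Fs)$; verifying this compatibility is what produces the clean final factor depending only on whether $b_{i_{\Box}}$ is zero.
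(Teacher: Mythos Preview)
Your proposal is correct and follows essentially the same approach as the paper's own proof. Both arguments isolate the sum over the last coordinate $d_r$, compute the inner sum $I(e)$ by the same case split on $e=d_{r-1}$ (obtaining the key uniformity $I(e)=q^{\mu_r-1}$ when $X=\mu_r$ and the $e$-dependent values $(1-q^{-1})q^{b_r}$ or $q^{b_r}$ when $X=b_r<\mu_r$), and then reduce to rank $r-1$; the only cosmetic difference is that the paper iterates the peeling explicitly down to rank $i_{\Box}$ (or rank $1$ in the maximal case), whereas you package the same reduction as a formal induction on $r$.
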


\begin{proof}
If $r=1$, then $\Fs=b_{1}=k_{A}(\Fs)$, $\Omega(\Fs)=\{d_{1}\mid 0\leq d_{1}\leq k_{A}(\Fs)\}$ and $G_{\Delta}(\Ft)=\gamma_{A}(\Fc_{1})$. If $b_{1}<\mu_{1}$,  
$$
\sum_{\Ft\in\Omega(\Fs)}G_{\Delta }(\Ft)q^{k_{A}(\Ft)}=1+\sum^{b_{1}}_{d_{1}=1}(1-q^{-1})q^{d_{1}}=q^{b_{1}}.
$$
If $b_{1}=\mu_{1}$, then 
$$
\sum_{\Ft\in\Omega(\Fs)}G_{\Delta }(\Ft)q^{k_{A}(\Ft)}=1+\sum^{\mu_{1}-1}_{d_{1}=1}(1-q^{-1})q^{d_{1}}+(-q^{-1})q^{\mu_{1}}=
q^{\mu_{1}-1}+(-q^{-1})q^{\mu_{1}}=0.
$$

Suppose that $r\geq 2$.
We have two cases, $i_{\Box}=r$ and $i_{\Box}<r$. 
Let $\Ft=(d_{1},\dots,d_{r})$ be in $\Omega(\Fs)$.

Suppose $i_{\Box}=r$.
 Then $d_{i}=b_{i}$ for $1\leq i\leq r-1$ and $d_{r}\leq b_{r}$.
 Recall that
$$
G_{\Delta}(\Ft)=G_{\Delta}(\Ft_{r-1})\gamma(\Fc_{r})\gamma(\Fc_{r+1}),
$$
and  then
$$
\sum_{\Ft\in\Omega(\Fs)}G_{\Delta}(\Ft)q^{k_{A}(\Ft)}
=\sum^{b_{r}}_{d_{r}=0}G_{\Delta}(\Ft)q^{k_{A}(\Ft)}
=G_{\Delta}(\Fs_{r-1})q^{k_{A}(\Fs_{r-1})}\sum^{b_{r}}_{d_{r}=0}
\gamma(\Fc_{r}) \gamma(\Fc_{r+1})q^{d_{r}}.
$$
Since $d_{r}\leq b_{r}<\mu_{r}$, the entries $\Fc_{r}$ and  $\Fc_{r+1}$ are not boxed. 
When $b_{r}=0$, it is easy to see that $\Omega(\Fs)=\{\Fs\}$ and Eqn.~\eqref{eq:Omega-G-A} holds. 
Let us assume that $b_{r}>0$.
If $b_{r-1}=0$, then ${\Fc}_{r+1}$ is circled, $\gamma({\Fc}_{r+1})=1$
and
$$
\sum^{b_{r}}_{d_{r}=0}\gamma(\Fc_{r}) \gamma({\Fc}_{r+1})q^{d_{r}}=1+\sum^{b_{r}}_{d_{r}=1}(1-q^{-1})q^{d_{r}}=q^{b_{r}}.
$$
In this case, $\gamma(\Fs_{r}) \gamma({\Fs}_{r+1})=1-q^{-1}$.
If $0<b_{r-1}\leq \mu_{r-1}$, 
then ${\Fc}_{r+1}$ is neither circled nor boxed, $\gamma({\Fc}_{r+1})=1-q^{-1}$, and 
$$
\sum^{b_{r}}_{d_{r}=0}\gamma(\Fc_{r}) \gamma({\Fc}_{r+1})q^{d_{r}}=1\cdot (1-q^{-1})+\sum^{b_{r}}_{d_{r}=1}(1-q^{-1})^{2}q^{d_{r}}
=(1-q^{-1})q^{b_{r}}.
$$
In this case, $\gamma(\Fs_{r}) \gamma({\Fs}_{r+1})=(1-q^{-1})^{2}$. 
In summary, if $i_{\Box}=r$, then
\begin{equation}\label{eq:Omega-pf-unbox}
\sum^{b_{r}}_{d_{r}=0}G_{\Delta}(\Ft)q^{k_{A}(\Ft)}=\begin{cases}
q^{k_{A}(\Fs)}(1-q^{-1})^{-1}G_{\Delta}(\Fs) &\text{ if $b_{r}>0$;}\\
q^{k_{A}(\Fs)}G_{\Delta}(\Fs) &\text{ if $b_{r}=0$.}
\end{cases}
\end{equation}

Suppose instead that $i_{\Box}<r$. Then  $b_{r}=\mu_{r}$ and the entry $\Fs_{r}$ is boxed but not circled.
First,  we compute
$$
\sum^{\mu_{r}}_{d_{r}=0}G_{\Delta}(\Ft)q^{k_{A}(\Ft)}=G_{\Delta}(\Ft_{r-1})q^{k_{A}(\Ft_{r-1})}\sum^{\mu_{r}}_{d_{r}=0}
\gamma(\Fc_{r}) \gamma({\Fc}_{r+1})q^{d_{r}}.
$$
If $d_{r-1}=0$, then ${\Fc}_{r+2}$ is not boxed and 
$$
\sum^{\mu_{r}}_{d_{r}=0}\gamma(\Fc_{r}) \gamma({\Fc}_{r+1})q^{d_{r}}=1+\sum^{\mu_{r}-1}_{d_{r}=1}(1-q^{-1})q^{d_{r}}+0=q^{\mu_{r}-1}.
$$
If $0<d_{r-1}\leq \mu_{r-1}$, then
$$
\sum^{\mu_{r}}_{d_{r}=0}\gamma(\Fc_{r}) \gamma({\Fc}_{r+1})q^{d_{r}}
=(1-q^{-1})+\sum^{\mu_{r}-1}_{d_{r}=1}(1-q^{-1})^{2}q^{d_{r}}+q^{-2}q^{\mu_{r}}=q^{\mu_{r}-1}.
$$
Thus we obtain an inductive formula
\begin{equation}
\sum^{\mu_{r}}_{d_{r}=0}G_{\Delta}(\Ft)q^{k_{A}(\Ft)}
=G_{\Delta}(\Ft_{r-1})q^{k_{A}(\Ft_{r-1})+\mu_{r}-1}.
\end{equation}

If $\Fs$ is maximal, by this formula
$$
\sum_{\Ft\in\Omega(\Fs)}G_{\Delta}(\Ft)q^{k_{A}(\Ft)}=q^{\sum_{i=2}^{r} (\mu_{i}-1)}\sum_{\Ft'\in\Omega(\Fs_{1})}G_{\Delta}(\Ft')q^{k_{A}(\Ft')}.
$$
Referring to the case $r=1$, since $\Fs_{1}=\mu_{1}$ is maximal, 
$\sum_{\Ft\in\Omega(\Fs)}G_{\Delta}(\Ft)q^{k_{A}(\Ft)}=0.
$
If $\Fs$ is not maximal, by the definition of $\Omega(\Fs)$, we have
$$
\sum_{\Ft\in\Omega(\Fs)}G_{\Delta}(\Ft)q^{k_{A}(\Ft)}=
q^{\sum^{r}_{i=i_{\Box}+1}(\mu_{i}-1)}\sum_{\Ft'\in\Omega(\Fs_{i_{\Box}})} G_{\Delta}(\Ft')q^{k_{A}(\Ft')}.
$$
Referring to~\eqref{eq:Omega-pf-unbox}, we have
$$
\sum_{\Ft\in\Omega(\Fs)}G_{\Delta}(\Ft)q^{k_{A}(\Ft)}=\begin{cases}
q^{k_{A}(\Fs)-(r-i_{\Box})}(1-q^{-1})^{-1}G_{\Delta}(\Fs_{i_{\Box}}) & \text{ if $b_{i_{\Box}}>0$,}\\
q^{k_{A}(\Fs)-(r-i_{\Box})} G_{\Delta}(\Fs_{i_{\Box}}) &\text{ if $b_{i_{\Box}}=0$.}
\end{cases}
$$
This completes the proof of Lemma~\ref{pro:Omega-G-A}.
\end{proof}

To continue the analysis, it is convenient to separate the cases $k_r-\mu_r$ even and $k_r-\mu_r$ odd.  The even case turns out
to be much easier and we defer this.  So we suppose until further notice that $k_r-\mu_r$ is odd.
With this hypothesis $k_{r}\ne \mu_r+2\sum^{r-1}_{i=1}\mu_i$
and $\Omega^{=}_{B}(\mu,k_{r})=\emptyset$.  Let $\mu'=\upsilon(\mu)$
(cf.\ \eqref{upsilon}).
For $\Fs=(b_{1},\dots,b_{r})\in\Omega_{A}(\mu',k_{r})$,
let $\Delta_{C}(\Fs)=(\Fb_{1},\dots,\Fb_{2r-1})$ be the 
decorated array given as follows:  $\Fb_{1}=2k_{r}$ and,
for $1\leq i\leq r-1$, 
$\Fb_{i+1}=2k_{r}-\sum^{i}_{j=1}b_{j}$,
${\Fb}_{2r-i}=\sum^{i}_{j=1}b_{j}$.
The array is decorated following the same rules as above, with the 
$d_i$ replaced by $b_i$ and $\mu$ by $\mu'$.  (So for example 
when $1\leq i\leq r$ the entry $\Fb_{i}$ is boxed if $b_{i}=\mu_{i}'$.)
Note that this gives a
$\Delta_{C}$-decorated array in the sense of \cite{FZ}.

We associate different algebraic quantities to this array, ones associated with the values
of quadratic Gauss sums, and so ones that reflect the 
hidden role of the double cover in this combinatorial identity.
For the entry $a=\Fb_{i}$ ($1\leq i\leq 2r-1$), let
$$
\tilde{\gamma}(a)=\begin{cases}
1-q^{-1}& \text{ if $a$ is neither boxed nor circled and $a$ is even,}\\
-q^{-1}& \text{ if $a$ is boxed but not circled and $a$ is even,}\\
q^{-\frac{1}{2}} & \text{ if $a$ is boxed but not circled and $a$ is odd,}\\
1& \text{ if $a$ is circled but not boxed,}\\
0& \text{ if $a$ is both boxed and circled.}
\end{cases}
$$
Define $G_{\Delta_{C}}(\Fs)=\prod^{2r-1}_{i=1}\tilde{\gamma}(\Fb_{i})$.
Since $\Fb_{i+1}$ and ${\Fb}_{2r-i}$ have the same parity for $1\leq i\leq r-1$ and since $\Fb_{1}$ is even, it follows that $G_{\Delta_{C}}(\Fs)$ is in $\Z[q^{-1}]$.
Also note that $\tilde{\gamma}(b)=\gamma(a)$ if  $a$ and $b$ have the same decorations and $b$ is even.
This simple fact will be used repeatedly in the calculations below. 

\begin{lemma} \label{lm:C-total-parity}
Let $k$ a non-negative integer and suppose that $\Fs=(b_{1},\dots,b_{r})\in\Omega_{A}(\mu',k)$. 
\begin{enumerate}
\item\label{part1}
If $b_{r}<\mu_{r}$, then $G_{\Delta_{C}}(\Fs)=0$ unless $b_{i}$ is even for $1\leq i\leq r-1$.
\item\label{part2}
If $b_{r}=\mu_{r}$, then $G_{\Delta_{C}}(\Fs)=0$ unless $b_{i_{\Box}}\equiv k-\mu_{r}\bmod{2}$ and $b_{i}$ is even for $1\leq i\ne i_{\Box}\leq r-1$, 
where $i_{\Box}=i_{\Box}(\Fs)$.
\end{enumerate}
\end{lemma}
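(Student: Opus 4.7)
The plan is to prove both parts by showing directly that, under the stipulated parity violation, some entry $\Fb_j$ of $\Delta_C(\Fs)$ is simultaneously odd and neither boxed nor circled. By the convention implicit in the piecewise definition of $\tilde\gamma$ (the five listed cases do not cover ``neither boxed nor circled and odd'', so such entries contribute $\tilde\gamma=0$), this forces the product $G_{\Delta_C}(\Fs)=\prod_i\tilde\gamma(\Fb_i)$ to vanish. Two preliminary observations drive the argument: from the formulas for $\Delta_C(\Fs)$, both $\Fb_{i+1}$ and $\Fb_{2r-i}$ have the parity of the partial sum $\sigma_i:=\sum_{j=1}^i b_j\pmod{2}$; and under the inherited type C rules from \cite{FZ}, for $1\leq i\leq r-1$ the entry $\Fb_{2r-i}$ is circled iff $b_i=0$ and is boxed iff $b_{i+1}=\mu'_{i+1}$ (where $\mu'_j=2\mu_j$ for $j<r$ and $\mu'_r=\mu_r$, so that $\Fb_{r+1}=\Fb_{2r-(r-1)}$ is boxed iff $b_r=\mu_r$).

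For Part (1), I suppose $b_r<\mu_r$ and that some $b_{i_0}$ with $i_0\leq r-1$ is odd. Let $j^\ast=\min\{j:1\leq j\leq r-1,\ \sigma_j\text{ is odd}\}$; then $b_{j^\ast}$ itself is odd and positive. I run a walk: at each step $j$, starting at $j=j^\ast$, examine the odd entry $\Fb_{2r-j}$. If $\Fb_{2r-j}$ is not boxed, then it is also not circled (because $b_j$ is positive, being either $b_{j^\ast}$ odd-positive at the start, or $b_j=\mu'_j>0$ at later steps by the propagation rule of the walk), so $\tilde\gamma(\Fb_{2r-j})=0$ and we are done. Otherwise $b_{j+1}=\mu'_{j+1}$ is even and positive, $\sigma_{j+1}$ remains odd, and we advance to $j+1$. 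The walk must terminate in the ``not boxed'' case at some $j\leq r-1$: it cannot continue past $j=r-1$, and at $j=r-1$ the hypothesis $b_r<\mu_r$ itself breaks boxing at $\Fb_{r+1}$.

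For Part (2), assume $b_r=\mu_r$. The identity $k=\sum_{j=1}^r b_j$, combined with $b_r=\mu_r$ and $b_j=\mu'_j=2\mu_j$ even for $i_\Box<j<r$, shows that $b_{i_\Box}\equiv k-\mu_r\pmod{2}$ is equivalent to $b_i$ being even for all $1\leq i<i_\Box$, so the two conditions in the lemma statement coincide. It then suffices to derive $G_{\Delta_C}(\Fs)=0$ from the assumption that some $b_{j_0}$ with $j_0\neq i_\Box$, $1\leq j_0\leq r-1$, is odd. Since $b_j=2\mu_j$ is even for $i_\Box<j\leq r-1$, necessarily $j_0<i_\Box$, and I would apply the same walk as in Part (1): it may propagate past $i_\Box$ through the maxed-out entries, but must terminate at $j=i_\Box-1$ at the latest, because the strict inequality $b_{i_\Box}<\mu'_{i_\Box}$ breaks the boxing of $\Fb_{2r-(i_\Box-1)}$. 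The terminal entry is again odd and neither boxed nor circled, so $G_{\Delta_C}(\Fs)=0$. The main technical obstacle will be verifying the ``not circled'' condition at each termination point---this requires tracking that the relevant $b_j$ is positive along the walk---and carefully handling the edge cases $j^\ast=r-1$ in Part (1) and $j^\ast=i_\Box-1$ in Part (2), in which the walk is trivial and the argument applies directly to the initial entry.
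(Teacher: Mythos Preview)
Your proof is correct and uses essentially the same parity-propagation mechanism as the paper, just run in the contrapositive direction: the paper assumes $G_{\Delta_C}(\Fs)\neq 0$ and pushes evenness of the entries $\Fb_{2r-i}$ outward from the unboxed entry at index $r+1$ (or $2r-i_\Box+1$), while you assume a parity violation and walk forward from the first odd partial sum until you hit an odd, unboxed, uncircled entry. Two small wording fixes: in Part~(2) the walk cannot ``propagate past $i_\Box$''---since $j^\ast<i_\Box$ and $\Fb_{2r-(i_\Box-1)}$ is unboxed, it terminates at or before $i_\Box-1$, as you yourself state---and the two conditions in Part~(2) do not literally coincide; rather, evenness of all $b_i$ with $i\neq i_\Box$ \emph{implies} the congruence $b_{i_\Box}\equiv k-\mu_r\pmod 2$, which is all your argument needs.
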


\begin{proof}
To prove part~\ref{part1}, suppose that $G_{\Delta_{C}}(\Fs)\ne 0$ and
$b_{r}<\mu_{r}$. 
Then the entry ${\Fb}_{r+1}$ in the $\Delta_{C}(\Fs)$-decorated array is not boxed and ${\Fb}_{r+1}$ is even. 
When $i=1$, $\Fb_1=2k_r$ is even.
For $2\leq i\leq r-1$, if $b_{i}<2\mu_{i}$, then ${\Fb}_{2r-i+1}$ is unboxed and even; 
if $b_{i}=2\mu_{i}$, then ${\Fb}_{2r-i+1}={\Fb}_{2r-i}-b_{i}$ is even. 
Hence ${\Fb}_{2r-i+1}$ is even if ${\Fb}_{2r-i}$ is even. 
Since ${\Fb}_{r+1}$ is even,  ${\Fb}_{2r-i}$ is even for each $1\leq i\leq r-1$. Since ${\Fb}_{2r-i}-{\Fb}_{2r-i+1}=b_{i}$, $b_{i}$ is even for  $1\leq i\leq r-1$.

As for part~\ref{part2}, suppose that $b_{r}=\mu_{r}$. 
 By the definition of $i_{\Box}$,  ${\Fb}_{2r-i-1}$ is boxed and $b_{i}=\mu'_{i}$ is even for $i_{\Box}<i\leq r-1$.
Since $b_{i_{\Box}}<2\mu_{i_{\Box}}$, 
similarly to the case $b_{r}<\mu_{r}$,
 $b_{i}$ and $\Fb_{i}$ are even for $1\leq i< i_{\Box}$. Since 
$$
k=\Fb_{i_{\Box}-1}+b_{i_{\Box}}+2\sum^{r-1}_{i=i_{\Box}+1}\mu_{i}+\mu_{r},
$$
it follows that $b_{i_{\Box}}\equiv k-\mu_{r}\bmod{2}$ and $\Fb_{i}\equiv k-\mu_{r}\bmod{2}$ for $i_{\Box}\leq i\leq r-1$.
\end{proof}
  
  \begin{lemma}\label{lm:disjoint}
Let $\Ft$ and $\Fs$ be distinct patterns in $\Omega^{\leq}_{A}(\mu,k)$.  Then $\Omega(\Ft)\cap \Omega(\Fs)=\emptyset$.
\end{lemma}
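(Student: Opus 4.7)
The plan is to argue by direct case analysis on the box indices $i_{\Box}(\Ft)$ and $i_{\Box}(\Fs)$, leveraging the shared weighting $k_A(\Ft)=k_A(\Fs)=k$ as the key constraint. I first rule out the extremal situation in which one of the patterns equals the maximal pattern $\mu$: if $\Ft=\mu$ then $k=\sum_i\mu_i$, and any other element of $\Omega^{\leq}_A(\mu,k)$ satisfies $\sum_i d_i=\sum_i\mu_i$ with $d_i\leq\mu_i$, which forces $\Fs=\mu=\Ft$, contradicting the hypothesis. Hence I may assume both $\Ft$ and $\Fs$ are non-maximal, so that $i_{\Box}(\Ft)$ and $i_{\Box}(\Fs)$ are well-defined.

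Next, consider the case $i_{\Box}(\Ft)=i_{\Box}(\Fs)=:i_{\Box}$. Any $x\in\Omega(\Ft)\cap\Omega(\Fs)$ satisfies $x_i=d_i(\Ft)=d_i(\Fs)$ for $1\leq i<i_{\Box}$, so $d_i(\Ft)=d_i(\Fs)$ on that range. Since both patterns have $d_i=\mu_i$ for $i>i_{\Box}$ and their total sums $k_A$ agree, the single remaining entry $d_{i_{\Box}}$ must also match, giving $\Ft=\Fs$ and contradicting $\Ft\neq\Fs$.

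The main remaining case, WLOG, is $i_{\Box}(\Ft)<i_{\Box}(\Fs)$. Pick any $x\in\Omega(\Ft)\cap\Omega(\Fs)$. Membership in $\Omega(\Fs)$ gives $x_i=d_i(\Fs)$ for all $i<i_{\Box}(\Fs)$, in particular for every $i\leq i_{\Box}(\Ft)$. Membership in $\Omega(\Ft)$ gives $x_i=d_i(\Ft)$ for $i<i_{\Box}(\Ft)$ and $x_{i_{\Box}(\Ft)}\leq d_{i_{\Box}(\Ft)}(\Ft)$. Comparing the two descriptions yields $d_i(\Ft)=d_i(\Fs)$ for $i<i_{\Box}(\Ft)$ together with the inequality $d_{i_{\Box}(\Ft)}(\Fs)\leq d_{i_{\Box}(\Ft)}(\Ft)$. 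Now expand $k_A(\Ft)=k_A(\Fs)$ using $d_i(\Ft)=\mu_i$ for $i>i_{\Box}(\Ft)$ and $d_i(\Fs)=\mu_i$ for $i>i_{\Box}(\Fs)$; cancelling the common terms $\sum_{i<i_{\Box}(\Ft)}d_i$ and $\sum_{i>i_{\Box}(\Fs)}\mu_i$ rearranges the identity to
$$d_{i_{\Box}(\Ft)}(\Ft)-d_{i_{\Box}(\Ft)}(\Fs)=\sum_{i_{\Box}(\Ft)<i<i_{\Box}(\Fs)}\bigl(d_i(\Fs)-\mu_i\bigr)+\bigl(d_{i_{\Box}(\Fs)}(\Fs)-\mu_{i_{\Box}(\Fs)}\bigr).$$
The right-hand side is a sum of nonpositive quantities whose last summand is strictly negative by the very definition of $i_{\Box}(\Fs)$, so the right-hand side is negative; this contradicts the inequality $d_{i_{\Box}(\Ft)}(\Fs)\leq d_{i_{\Box}(\Ft)}(\Ft)$ established above.

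The main obstacle is really just bookkeeping: unpacking the tight definition of $\Omega(\Ft)$ and keeping the indices aligned between two short patterns whose box indices differ. Once that is done, the single rearrangement of the $k_A$-equality yields the contradiction essentially immediately.
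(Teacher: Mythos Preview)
Your proof is correct and follows essentially the same approach as the paper: both argue by cases on whether $i_{\Box}(\Ft)$ and $i_{\Box}(\Fs)$ coincide, handle the equal case identically, and in the unequal case derive a contradiction from the constraint $k_A(\Ft)=k_A(\Fs)=k$. The only cosmetic difference is that in the unequal-index case the paper bounds the partial sum $\sum_{j<i_{\Box}(\Ft)}x_j$ directly, whereas you rearrange $k_A(\Ft)=k_A(\Fs)$ into an explicit identity whose right-hand side is visibly negative; these are the same computation written two ways.
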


\begin{proof}
Let $\Ft=(d_{1},\dots,d_{r})$ and $\Fs=(b_{1},\dots,b_{r})$ be in $\Omega^{\leq}_{A}(\mu,k)$.
If $k=\sum^r_{i=1}\mu_i$, then $\Omega^{\leq}_{A}(\mu,k)$ contains only one element.
Without loss of generality, suppose that $k<\sum^r_{i=1}\mu_i$ and $i_{\Box}(\Ft)\geq i_{\Box}(\Fs)$.  

The first possibility is $i_{\Box}=i_{\Box}(\Ft)= i_{\Box}(\Fs)$.
If there exists $\Fx=(x_{1},\dots,x_{r})\in \Omega(\Ft)\cap \Omega(\Fs)$, then $x_{i}=b_{i}=d_{i}$ for $1\leq i< i_{\Box}$. Since $b_{i}=d_{i}=\mu_{i}$ for $i_{\Box}<i\leq r$ and $\sum^{r}_{i=1}b_{i}=\sum^{r}_{i=1}d_{i}=k$, we have
$b_{i_{\Box}}=d_{i_{\Box}}$.  Then $\Fs=\Ft$, which contradicts our hypothesis.

Suppose instead that $i_{\Box}(\Ft)\ne i_{\Box}(\Fs)$.
Let $(x_{1},\dots,x_{r})$ be in $\Omega(\Fs)$.  Then  $\sum^{i_{\Box}(\Fs)}_{j=1}x_{j}\leq k-\sum^{r}_{j=i_{\Box}(\Fs)+1}\mu_{j}$, and $x_{j}\leq \mu_{j}$ for $i_{\Box}(\Fs)<j\leq i_{\Box}(\Ft)-1$. Thus
$$
\sum^{i_{\Box}(\Ft)-1}_{j=1}x_{j}\leq k-\sum^{r}_{j=i_{\Box}(\Ft)}\mu_{j}.
$$
For $\Ft$, we have $\sum^{i_{\Box}(\Ft)-1}_{j=1}d_{j}>k-\sum^{r}_{j=i_{\Box}(\Ft)}\mu_{j}$ and 
then 
$$
\sum^{i_{\Box}(\Ft)-1}_{j=1}x_{j}\leq k-\sum^{r}_{j=i_{\Box}(\Ft)}\mu_{j}< \sum^{i_{\Box}(\Ft)-1}_{j=1}d_{j}.
$$
The inequality implies $(x_1,\dots,x_r)\notin \Omega(\Ft)$.
Indeed, if $(x_1,\dots,x_r)\in \Omega(\Ft)$, then $x_j=d_j$ for $1\leq j\leq i_{\Box}(\Ft)-1$.
Hence $\Omega(\Ft)\cap \Omega(\Fs)=\emptyset$.  
\end{proof}

Write $\mu^{T}=(\mu_{1},\dots,\mu_{r-1})$ and $k^{\circ}=\frac{k_{r}-(\mu_{r}+1)}{2}$.
\begin{lemma}\label{lemma:disjoint-union}
If $k_r<\mu_r+2\sum^{r-1}_{i=1}\mu_i$, then
we have the disjoint union
\begin{equation}\label{eq:odd-omega}
\Omega^{>}_{B}(\mu,k_{r})=\sqcup_{\Fs\in \Omega^{\leq}_{A}(\mu^{T},k^{\circ})}\Omega^{>}_{B}(\Fs,k_{r}),
\end{equation}
where
$$
\Omega^{>}_{B}(\Fs,k_{r})=\cpair{(\Fs',d_{r})\in\Omega^{>}(\mu)\mid \Fs'\in\Omega(\Fs),~ d_{r}=k_{r}-2k_{A}(\Fs')}.
$$
\end{lemma}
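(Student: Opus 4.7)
The plan is to verify the asserted identity by an explicit construction of $\Fs$ from elements of $\Omega^{>}_B(\mu,k_r)$, and then to invoke Lemma~\ref{lm:disjoint} to upgrade the union to a disjoint one.

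I would first check the easy containment $\bigsqcup_{\Fs}\Omega^{>}_B(\Fs,k_r)\subseteq\Omega^{>}_B(\mu,k_r)$. For $\Fs\in\Omega^{\leq}_A(\mu^T,k^{\circ})$ and $\Fs'\in\Omega(\Fs)$, the definition of $\Omega(\Fs)$ gives $k_A(\Fs')\leq k_A(\Fs)=k^{\circ}$, so $d_r:=k_r-2k_A(\Fs')\geq k_r-2k^{\circ}=\mu_r+1>\mu_r$; combined with $\Fs'\in\Omega^{\leq}(\mu^T)$, this shows $(\Fs',d_r)\in\Omega^{>}(\mu)$ with $k_B((\Fs',d_r))=k_r$.

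For the reverse containment, given $\Ft=(d_1,\dots,d_r)\in\Omega^{>}_B(\mu,k_r)$, write $\Ft'=(d_1,\dots,d_{r-1})$. Since $d_r\equiv k_r\pmod{2}$ and $k_r-\mu_r$ is odd by the running hypothesis of the section, $d_r-\mu_r$ is a positive odd integer, so $d_r\geq\mu_r+1$ and $k_A(\Ft')=(k_r-d_r)/2\leq k^{\circ}$. I would then construct the required $\Fs$ explicitly as follows. Define
$$f(i)=\sum_{j<i}d_j+\sum_{j\geq i}\mu_j\quad(1\leq i\leq r-1),\qquad f(r)=k_A(\Ft').$$
The identity $f(i+1)-f(i)=d_i-\mu_i\leq 0$ shows $f$ is weakly decreasing. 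The hypothesis $k_r<\mu_r+2\sum_{i<r}\mu_i$ gives $k^{\circ}<\sum_j\mu_j=f(1)$, while $f(r)\leq k^{\circ}$, so there is a unique $i^{*}\in\{1,\dots,r-1\}$ with $f(i^{*}+1)\leq k^{\circ}<f(i^{*})$. Setting $b_j=d_j$ for $j<i^{*}$, $b_{i^{*}}=k^{\circ}-f(i^{*})+\mu_{i^{*}}$, and $b_j=\mu_j$ for $j>i^{*}$, the two defining inequalities for $i^{*}$ translate (via $f(i+1)=f(i)-\mu_i+d_i$) to $d_{i^{*}}\leq b_{i^{*}}<\mu_{i^{*}}$. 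A direct check then shows $\Fs=(b_1,\dots,b_{r-1})\in\Omega^{\leq}_A(\mu^T,k^{\circ})$, $i_{\Box}(\Fs)=i^{*}$, and $\Ft'\in\Omega(\Fs)$, so $\Ft\in\Omega^{>}_B(\Fs,k_r)$.

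For disjointness, if $(\Fs',d_r)\in\Omega^{>}_B(\Fs_1,k_r)\cap\Omega^{>}_B(\Fs_2,k_r)$ with $\Fs_1,\Fs_2\in\Omega^{\leq}_A(\mu^T,k^{\circ})$, then $\Fs'\in\Omega(\Fs_1)\cap\Omega(\Fs_2)$, and Lemma~\ref{lm:disjoint} applied with $(\mu^T,k^{\circ})$ in place of $(\mu,k)$ forces $\Fs_1=\Fs_2$. The main obstacle is identifying the correct breakpoint $i^{*}$ and verifying $d_{i^{*}}\leq b_{i^{*}}<\mu_{i^{*}}$; although this is routine algebra once the monotonicity of $f$ is in place, it is the single point where the hypothesis $k_r<\mu_r+2\sum_{i<r}\mu_i$ (guaranteeing $k^{\circ}<f(1)$, hence existence of $i^{*}$) and the parity assumption $k_r-\mu_r$ odd (ensuring $k^{\circ}\in\Z$ and $d_r\geq\mu_r+1$ strictly, rather than merely $d_r>\mu_r$) both enter essentially.
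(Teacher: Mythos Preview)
Your proof is correct and takes essentially the same approach as the paper. The paper also locates the breakpoint index (called $i_0$ there) by examining the same weakly decreasing sequence you package as $f$, constructs the identical $\Fs$, and invokes Lemma~\ref{lm:disjoint} for disjointness; your write-up is slightly more complete in that you also verify the easy containment $\bigcup_{\Fs}\Omega^{>}_B(\Fs,k_r)\subseteq\Omega^{>}_B(\mu,k_r)$ explicitly.
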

Notice that if $k_r>\mu_r+2\sum^{r-1}_{i=1}\mu_i$, then $\Omega_{B}(\mu,k_{r})=\Omega^{>}_{B}(\mu,k_{r})$.

\begin{proof}
Let $\Ft=(d_{1},\dots,d_{r})$ be  in $\Omega^{>}_{B}(\mu,k_{r})$. Then $k_{A}(\Ft_{r-1})=\sum^{r-1}_{i=1}d_i\leq k^{\circ}$. 
As $\sum^{r-1}_{i=1}d_i\leq k^{\circ}<\sum^{r-1}_{i=1}\mu_i$, in the decreasing sequence
$$
\mu_1+\cdots+\mu_{r-1},d_1+\mu_2+\cdots+\mu_{r-1},\cdots,d_1+\cdots+d_{r-2}+\mu_{r-1},d_1+\cdots+d_{r-2}+d_{r-1}
$$
there exists a minimal index $i_{0}$  such that  
\begin{equation}\label{eq:Omega-pf}
\sum^{i_{0}-1}_{j=1}d_{j}>k^{\circ}-\sum^{r-1}_{j=i_{0}}\mu_{j}\text{ and }
\sum^{i_{0}}_{j=1}d_{j}\leq k^{\circ}-\sum^{r-1}_{j=i_{0}+1}\mu_{j}.
\end{equation}
We have $i_{0}\geq 1$ as $k^{\circ}<\sum^{r-1}_{i=1}\mu_i$. 
Let $\Fs=(b_{1},\dots,b_{r-1})$ defined by $i_{\Box}(\Fs)=i_{0}$, $b_{i}=d_{i}$ for $1\leq i\leq i_{0}-1$ and $b_{i_{0}}=k^{\circ}-\sum^{r}_{j=i_{0}+1}\mu_{j}-\sum^{i_{0}-1}_{j=1}b_{j}$. 
By~\eqref{eq:Omega-pf}, $0\leq b_{i_{0}}<\mu_{i_{0}}$ and $d_{i_{0}}\leq b_{i_{0}}$. Thus $\Ft_{r-1}$ is in $\Omega(\Fs)$.
Then using Lemma \ref{lm:disjoint}, we obtain the disjoint union \eqref{eq:odd-omega}.\end{proof}

Define the set $\Omega^{=}_{A,\Box}(\mu',k_{r})$ to be
$$
\{\Fs=(b_{1},\dots,b_{r})\in\Omega^{=}_{A}(\mu',k_{r})\mid  b_{i_{\Box}(\Fs) }+k_{r}-\mu_{r} \equiv b_{i}\equiv 0 \bmod{2} \text{ for }1\leq i\ne i_{\Box}(\Fs) <r\},
$$
and
$$
\Omega^{<}_{A,\Box}(\mu',k_{r})=\cpair{\Fs=(b_{1},\dots,b_{r})\in\Omega^{<}_{A}(\mu',k_{r})\mid k_{r}-b_{r}\equiv b_{i}\equiv 0\bmod{2} \text{ for }1\leq i<r}.
$$

Let $\varrho\colon \Omega^{=}_{A,\Box}(\mu',k_{r})\to \Omega^{\leq}_{A}(\mu^{T},k^{\circ})$ be the map defined by
$$
\varrho\colon \Fs=(b_{1},b_{2},\dots,b_{r-1},\mu_{r})
\mapsto \left(\frac{b_{1}}{2},\dots,\frac{b_{i_{\Box}(\Fs )-1}}{2},\frac{b_{i_{\Box}(\Fs )}-1}{2},\frac{b_{i_{\Box}(\Fs )+1}}{2},\dots,\frac{b_{r-1}}{2}\right).
$$ 
Indeed, since $k_{r}-\mu_{r}$ is odd, $b_{i_{\Box}(\Fs)}-1$ is even.
As $k_{r}\not\equiv \mu_r\bmod{2}$, $k_r\ne 2\sum^{r-1}_{i=1}\mu_i+\mu_r$,
$\mu'$ is not in $\Omega^{=}_{A,\Box}(\mu',k_{r})$ and $\mu^{T}$ is not in $\Omega^{\leq}_{A}(\mu^{T},k^{\circ})$.
Thus $\varrho$ is a bijective map from $\Omega^{=}_{A,\Box}(\mu',k_{r})$ to $\Omega^{\leq}_{A}(\mu^{T},k^{\circ})$. 
For  $\Ft =(d_{1},\dots,d_{r-1})\in \Omega^{\leq}_{A}(\mu^{T},k^{\circ})$, $i_{\Box}(\Ft)>0$, and the inverse $\varrho^{-1}(\Ft)$ is 
given by
$$
(2d_{1},\dots,2d_{i_{\Box}(\Ft)-1},2d_{i_{\Box}(\Ft)}+1,2d_{i_{\Box}(\Ft)+1},\dots,2d_{r-1},\mu_{r}).
$$
This is in $\Omega^{=}_{A,\Box}(\mu',k_{r})$ as $2d_{i_{\Box}(\Ft )}+1<2\mu_{i_{\Box}(\Ft)}$.
Note that  $i_{\Box}(\Ft)=i_{\Box}(\varrho^{-1}(\Ft))$.

Extend the domain of $\varrho$ by defining $\varrho\colon \Omega^{<}_{A,\Box}(\mu',k_{r})\to \Omega^{<}_{B}(\mu,k_{r})$ by
$$
\varrho(b_{1},\dots,b_{r})\mapsto \ppair{\frac{b_{1}}{2},\dots,\frac{b_{r-1}}{2},b_{r}}.
$$
Again, $\varrho$ is a bijection.

By Lemma~\ref{lm:C-total-parity},
for $\Fs\in\Omega^{\leq}_{A}(\mu',k_{r})$,
 $G_{\Delta_{C}}(\Fs)=0$ unless $\Fs\in \Omega^{=}_{A,\Box}(\mu',k_{r})\cup \Omega^{<}_{A,\Box}(\mu',k_{r})$.

\begin{lemma}\label{lm:n-odd-dr}
If $k_{r}<\mu_{r}+2\sum^{r-1}_{i=1}\mu_{i}$ and $k_{r}-\mu_{r}$ is odd, 
then for $\Fs\in \Omega^{\leq}_{A}(\mu^{T},k^{\circ})$
\begin{equation}\label{eq:n-odd-dr}
G_{\Delta_{C}}(\varrho^{-1}(\Fs))=
\sum_{\Ft\in\Omega^{>}_{B}(\Fs,k_{r})}G(\Ft).
\end{equation}
Moreover,
$$
\sum_{\Ft\in\Omega_{B}(\mu,k_{r})}G(\Ft)
=\sum_{\Ft\in\Omega^{\leq}_{A}(\mu',k_{r})}G_{\Delta_{C}}(\Ft).
$$
\end{lemma}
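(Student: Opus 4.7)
The plan is to prove the first equation pointwise for each $\Fs\in\Omega^{\leq}_{A}(\mu^{T},k^{\circ})$, then deduce the ``moreover'' statement by combining it with the direct identification of $G_{\Delta_C}$ on $\Omega^{<}_{A,\Box}$ with $G$ on $\Omega^{<}_{B}$.

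Fix $\Fs=(s_1,\dots,s_{r-1})\in\Omega^{\leq}_{A}(\mu^{T},k^{\circ})$. For each $\Fs'\in\Omega(\Fs)$, the pattern $\Ft=(\Fs',d_r)\in\Omega^{>}_{B}(\Fs,k_r)$ has $d_r=k_r-2k_A(\Fs')$ with $d_r-\mu_r\geq 1$ odd (since $k_A(\Fs')\leq k^\circ$ and $k_r-\mu_r$ is odd). Proposition~\ref{pro:G}(3) therefore applies and, using $(d_r-\mu_r+1)/2=k^\circ+1-k_A(\Fs')$, yields
\begin{equation*}
G(\Ft)=(1-q^{-1})\,q^{k_A(\Fs')-k^\circ-1}\prod_{i\neq r,r+1}\gamma(\Fc_i).
\end{equation*}
The crucial observation is that in the totally resonant rank-$r$ pattern $\Ft$, the product $\prod_{i\neq r,r+1}\gamma(\Fc_i)$ equals $G_{\Delta}(\Fs')$ when $\Fs'$ is viewed as a rank-$(r-1)$ totally resonant short pattern bounded by $\mu^{T}$. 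This is proved by comparing decorations position-by-position between the two arrays: for $1\leq i\leq r-1$, $\Fc_i$ of the rank-$r$ array matches the $i$-th decorated entry of $\Fs'$ (both boxed iff $x_i=\mu_i$, circled iff $x_i=0$); and for $2\leq j\leq r-1$, $\Fc_{r+j}$ matches the $(r-2+j)$-th entry of $\Fs'$ (both boxed iff $x_{r-j+1}=\mu_{r-j+1}$, circled iff $x_{r-j}=0$). Summing over $\Fs'\in\Omega(\Fs)$ and applying Lemma~\ref{pro:Omega-G-A} with $r$ replaced by $r-1$ and $\mu$ by $\mu^{T}$ then gives
\begin{equation*}
\sum_{\Ft\in\Omega^{>}_{B}(\Fs,k_r)}G(\Ft)=q^{-(r-i_{\Box})}\,G_{\Delta}(\Fs_{i_{\Box}})\cdot\Theta,
\end{equation*}
where $i_\Box=i_\Box(\Fs)$, $\Theta=1$ if $s_{i_\Box}>0$, and $\Theta=1-q^{-1}$ if $s_{i_\Box}=0$.

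It remains to verify that $G_{\Delta_C}(\varrho^{-1}(\Fs))$ equals the same expression. Writing $\varrho^{-1}(\Fs)=(2s_1,\dots,2s_{i_\Box-1},2s_{i_\Box}+1,2\mu_{i_\Box+1},\dots,2\mu_{r-1},\mu_r)$, a parity check shows that the entries $\Fb_{i_\Box+1},\dots,\Fb_{2r-i_\Box}$ are exactly the odd ones and each is forced to be boxed (the relevant $b'_j$ equals $2\mu_j$ or $\mu_r$), contributing $q^{-1/2}$ apiece, i.e.\ a total factor $q^{-(r-i_\Box)}$. The remaining $2i_\Box-1$ entries are even; among them, those at positions $\{1,\dots,i_\Box-1\}\cup\{2r-i_\Box+2,\dots,2r-1\}$ have the same boxing/circling as the $2i_\Box-3$ non-middle decorated entries of the rank-$i_\Box$ type-B decorated array of $\Fs_{i_\Box}$, so they contribute the corresponding factor of $G_\Delta(\Fs_{i_\Box})$. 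Finally, a short case analysis on whether $s_{i_\Box}$ and $s_{i_\Box-1}$ vanish compares the two remaining even entries $\Fb_{i_\Box}$ and $\Fb_{2r-i_\Box+1}$ (which bracket the odd block) with the two middle entries of the rank-$i_\Box$ decorated array of $\Fs_{i_\Box}$ and produces exactly the factor $\Theta$, completing the first equation.

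For the ``moreover'', decompose $\Omega^{\leq}_{A}(\mu',k_r)=\Omega^{<}_{A}\sqcup\Omega^{=}_{A}$. By Lemma~\ref{lm:C-total-parity}, $G_{\Delta_C}$ vanishes outside $\Omega^{<}_{A,\Box}\cup\Omega^{=}_{A,\Box}$. On $\Omega^{<}_{A,\Box}$, all entries of $\Delta_C(\Fs)$ are even with $\Fb_i=2\Fc_i$ for the corresponding entry of $\varrho(\Fs)\in\Omega^{<}_{B}(\mu,k_r)$, and the boxing/circling conditions transfer directly, so $G_{\Delta_C}(\Fs)=G_\Delta(\varrho(\Fs))=G(\varrho(\Fs))$ by Proposition~\ref{pro:G}(2); since $\varrho$ restricts to a bijection onto $\Omega^{<}_{B}(\mu,k_r)$, these contributions match. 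On $\Omega^{=}_{A,\Box}$, the bijection $\varrho^{-1}$ with $\Omega^{\leq}_{A}(\mu^{T},k^{\circ})$, combined with the first equation of the lemma and the disjoint union in Lemma~\ref{lemma:disjoint-union}, gives $\sum_{\Omega^{=}_{A,\Box}}G_{\Delta_C}=\sum_{\Omega^{>}_{B}(\mu,k_r)}G$. Adding the two, and using $\Omega^{=}_{B}(\mu,k_r)=\emptyset$ so that $\Omega_B=\Omega^{<}_B\sqcup\Omega^{>}_B$, yields the desired identity. The main technical obstacle is the explicit decoration and parity bookkeeping underlying the identification $\prod_{i\neq r,r+1}\gamma(\Fc_i)=G_\Delta(\Fs')$ and the direct evaluation of $G_{\Delta_C}(\varrho^{-1}(\Fs))$; these are routine but require careful case analysis.
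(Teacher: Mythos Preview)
Your proposal is correct and follows essentially the same route as the paper: apply Proposition~\ref{pro:G}(3), recognize the remaining factor as $G_{\Delta}(\Ft_{r-1})$, sum via Lemma~\ref{pro:Omega-G-A}, and then compute $G_{\Delta_{C}}(\varrho^{-1}(\Fs))$ directly by separating odd (boxed) from even entries. The only cosmetic difference is bookkeeping in the final comparison: the paper singles out just $\Fb_{i_{\Box}}$ (showing $\tilde\gamma(\Fb_{i_{\Box}})=1-q^{-1}$ and matching the remaining $2i_{\Box}-2$ even $\Fb$-entries term-by-term with $\Fc_i,\Fc_{2r-i}$ for $1\le i\le i_{\Box}-1$), whereas you set aside both $\Fb_{i_{\Box}}$ and $\Fb_{2r-i_{\Box}+1}$ and run a four-case check. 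Both organizations yield the same identity. One small caution: your phrasing ``$2i_{\Box}-3$ non-middle entries'' and the reference to $s_{i_{\Box}-1}$ break down when $i_{\Box}=1$; in that boundary case there is a single even entry $\Fb_1$ and the verification is immediate, so you should note this separately rather than folding it into the general description.
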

\begin{proof}
If $k_{r}<\mu_{r}$, the proof is straightforward.
So we consider the case $k_{r}>\mu_{r}$. First, we have
$$
\sum_{\Ft\in\Omega_{B}(\mu,k_{r})}G(\Ft)=
\sum_{\Ft\in\Omega^{<}_{B}(\mu,k_{r})}G(\Ft)
+\sum_{\Ft\in\Omega^{>}_{B}(\mu,k_{r})}G(\Ft).
$$

Suppose that $\Ft=(d_{1},\dots,d_{r})$ is in $\Omega^{<}_{B}(\mu,k_{r})$. Let $\Fs=\varrho^{-1}(\Ft)$, which is in $\Omega^{<}_{A,\Box}(\mu',k_{r})$.
Then $G(\Ft)=G_{\Delta}(\Ft)$.
Let $\Delta(\Ft)=(\Fc_{1},\dots,\Fc_{2r-1})$ and $\Delta_{C}(\Fs)=(\Fb_{1},\dots,\Fb_{2r-1})$ be the corresponding decorated arrays. 
By definition,  for all $i$, $\Fb_{i}=2\Fc_{i}$, and $\Fb_{i}$ and $\Fc_{i}$ have the same decoration. 
Then $\gamma(\Fc_{i})=\tilde{\gamma}(\Fb_{i})$ for all $i$ and $G(\Ft)=G_{\Delta_{C}}(\Ft)$.
Since 
$$
\sum_{\Ft\in\Omega^{<}_{A}(\mu',k_{r})}G_{\Delta_{C}}(\Ft)=\sum_{\Ft\in\Omega^{<}_{A,\Box}(\mu',k_{r})}G_{\Delta_{C}}(\Ft)
=\sum_{\Fs\in\Omega^{<}_{B}(\mu,k_{r})}G(\Ft),
$$
we only need to verify Eqn.~\eqref{eq:n-odd-dr}.

By Lemma~\ref{lemma:disjoint-union},
$$
\sum_{\Ft\in\Omega^{>}_{B}(\mu,k_{r})}G(\Ft)=\sum_{\Fs\in\Omega^{\leq}_{A}(\mu^{T},k^{\circ})}\sum_{\Ft\in\Omega^{>}_{B}(\Fs,k_{r})}G(\Ft).
$$
For  $\Ft\in \Omega^{>}_{B}(\mu,k_{r})$, 
recall that
$$
G(\Ft)=(1-q^{-1})q^{-k^{\circ}-1}G_{\Delta}(\Ft_{r-1})q^{k_{A}(\Ft_{r-1})}.
$$
Thus for $\Fs\in\Omega^{\leq}_{A}(\mu^{T},k^{\circ})$, we have 
$$
\sum_{\Ft\in\Omega^{>}_{B}(\Fs,k_{r})}G(\Ft)
=(1-q^{-1})q^{-k^{\circ}-1}
\sum_{\Ft_{r-1}\in\Omega(\Fs)}G_{\Delta}(\Ft_{r-1})q^{k_{A}(\Ft_{r-1})}.
$$
By Lemma \ref{pro:Omega-G-A}, we have 
\begin{equation}\label{eq:n-odd-G-g}
\sum_{\Ft\in\Omega^{>}_{B}(\Fs,k_{r})}G(\Ft)=\begin{cases}
(1-q^{-1})q^{-(r-i_{\Box})}G_{\Delta}(\Fs_{i_{\Box}})& \text{ if }b_{i_{\Box}} =0\\
q^{-(r-i_{\Box})}G_{\Delta}(\Fs_{i_{\Box}})
& \text{ if }b_{i_{\Box}}>0,
\end{cases}
\end{equation}
where $i_{\Box}=i_{\Box}(\Fs)$.
Let $\Delta(\Fs_{i_{\Box}})=(\Fc_{1},\dots,\Fc_{i_{\Box}},\Fc_{2r+1-i_{\Box}},\dots,{\Fc}_{2r-1})$.
As the entry $\Fc_{i_{\Box}}$ is unboxed, 
$$
\gamma(\Fc_{i_{\Box}})=\begin{cases}
1& \text{ if } b_{i_{\Box}}=0\\
1-q^{-1}& \text{ if } 0<b_{i_{\Box}}<\mu_{i_{\Box}}.
\end{cases}
$$
Thus, using Eqn.~\eqref{eq:n-odd-G-g}, we obtain
\begin{equation}\label{eq:G-B}
\sum_{\Ft\in\Omega^{>}_{B}(\Fs,k_{r})}G(\Ft)=(1-q^{-1})q^{-(r-i_{\Box})}\prod^{i_{\Box}-1}_{i=1}\gamma(\Fc_{i})\gamma({\Fc}_{2r-i}).
\end{equation}

Let $\Fx=\varrho^{-1}(\Fs)$ and let $\Delta_{C}(\Fx)=(\Fb_{1},\dots,\Fb_{2r-1})$ be the $\Delta_{C}$-decorated array.
Recall that $\Fx=(2b_{1},\dots,2b_{i_{\Box}}+1,\dots,2b_{r-1},\mu_{r})$, where $b_{i}=\mu_{i}$ for $i_{\Box}<i\leq r$ and $0<2b_{i_{\Box}}+1<2\mu_{i_{\Box}}$. 
The entries $\Fb_{i+1}$ and ${\Fb}_{2r-i}$ are boxed and uncircled for $i_{\Box}\leq i\leq r-1$.
Since $\Fb_{i+1}$ and ${\Fb}_{2r-i}$ are odd for $i_{\Box}\leq i\leq r-1$, 
$$
\prod^{r-1}_{i=i_{\Box}} \tilde{\gamma}(\Fb_{i+1})\,\tilde{\gamma}({\Fb}_{2r-i})=q^{-(r-i_{\Box})}.
$$
As the entry $\Fb_{i_{\Box}}$ is unboxed, uncircled and even, $\tilde{\gamma}(\Fb_{i_{\Box}})=1-q^{-1}$.
Thus
\begin{equation}\label{eq:G-C}
G_{\Delta_{C}}(\Fx)=(1-q^{-1})q^{-(r-i_{\Box})}\prod^{i_{\Box}-1}_{i=1}\tilde{\gamma}(\Fb_{i})\,\tilde{\gamma}_{A}({\Fb}_{2r-i}).
\end{equation}
Since $\Fb_{i}$ and ${\Fb}_{2r-i}$ are even for $1\leq i\leq i_{\Box}-1$ and have the same decorations as $\Fc_{i}$ and ${\Fc}_{2r-i}$ respectively, we have
$\tilde{\gamma}(\Fb_{i})=\gamma(\Fc_{i})$ and $\tilde{\gamma}({\Fb}_{2r-i})=\gamma({\Fc}_{2r-i})$ for $1\leq i\leq i_{\Box}-1$.
Comparing with \eqref{eq:G-B} and \eqref{eq:G-C}, one obtains Eqn.~\eqref{eq:n-odd-dr}.
\end{proof}

Next, we obtain a formula for $\sum_{\Ft\in\Omega_{B}(\mu,k_{r})}G(\Ft)$ in terms of $\Delta_{C}$-decorated arrays. 
Here we consider both possible parities for $k_r-\mu_r$.
\begin{proposition}\label{pro:odd}
Let $k_{r}$ be a non-negative integer. 
Then
$$
\sum_{\Ft\in\Omega_{B}(\mu,k_{r})}G(\Ft)=\sum_{\Ft\in\Omega^{\leq}_{A}(\mu',k_{r})}G_{\Delta_{C}}(\Ft).
$$
\end{proposition}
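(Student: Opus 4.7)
The plan is to split the proof according to the parity of $k_r-\mu_r$.

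\emph{The odd case.} When $k_r<\mu_r+2\sum^{r-1}_{i=1}\mu_i$, Lemma~\ref{lm:n-odd-dr} supplies the identity directly. When $k_r\geq\mu_r+2\sum^{r-1}_{i=1}\mu_i$, the right-hand side vanishes since $\Omega^\leq_A(\mu',k_r)$ is empty (no type-A pattern with $k_A=k_r$ is available). For the left-hand side, $\Omega_B(\mu,k_r)=\Omega^>_B(\mu,k_r)$, and each such totally resonant $\Ft$ satisfies $d_{r-1}=d_{r+1}$ with $d_r-\mu_r$ a positive odd integer, so Proposition~\ref{pro:G}(3) gives $G(\Ft)=(1-q^{-1})q^{-(d_r-\mu_r+1)/2}\prod_{i\neq r,r+1}\gamma(\Fc_i)$. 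Substituting $d_r=k_r-2k_A(\Ft_{r-1})$ and pulling out the common factor of $q$, the sum reduces to a multiple of $\sum_{\Ft_{r-1}\in\Omega(\mu^T)}G_\Delta(\Ft_{r-1})q^{k_A(\Ft_{r-1})}$, which vanishes by Lemma~\ref{pro:Omega-G-A}(1) since $\mu^T$ is maximal.

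\emph{The even case, reducing the left-hand side.} When $k_r-\mu_r$ is even, any $\Ft\in\Omega^>_B(\mu,k_r)$ has $d_{r-1}=d_{r+1}$ by total resonance and $d_r-\mu_r=k_r-\mu_r-2\sum_{i<r}d_i$ a positive even integer, so Proposition~\ref{pro:G}(3) forces $G(\Ft)=0$. The left-hand side thus collapses to $\sum_{\Ft\in\Omega^\leq_B(\mu,k_r)}G_\Delta(\Ft)$. On the right-hand side, Lemma~\ref{lm:C-total-parity} tells us the sum is supported on those $\Fs=(b_1,\dots,b_r)$ with every $b_i$ ($i<r$) even (and, on the $\Omega^=_A$ stratum, with the additional constraint $b_{i_\Box}\equiv k_r-\mu_r\bmod 2$); the identity $\sum b_i=k_r$ together with $k_r\equiv\mu_r\bmod 2$ then forces $b_r\equiv\mu_r\bmod 2$ as well.

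\emph{The even case, the halving bijection.} Introduce the map $\eta\colon\Omega^\leq_B(\mu,k_r)\to\Omega^\leq_A(\mu',k_r)$ defined by $(d_1,\dots,d_{r-1},d_r)\mapsto(2d_1,\dots,2d_{r-1},d_r)$. One checks that $\eta$ preserves the total weight, in the sense that $k_A(\eta(\Ft))=2\sum_{i<r}d_i+d_r=k_B(\Ft)=k_r$, and that $\eta$ is a bijection onto the parity-constrained support of $G_{\Delta_C}$ identified in the previous step. Comparing the decorated arrays, the entries of $\Delta_C(\eta(\Ft))$ are exactly twice the corresponding entries of $\Delta(\Ft)$, and the box/circle decorations transport directly under doubling: the condition $b_j=\mu'_j=2\mu_j$ corresponds to $d_j=\mu_j$, while $b_j=0$ corresponds to $d_j=0$. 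Since every entry $\Fb_i$ of $\Delta_C(\eta(\Ft))$ is even, we have $\tilde\gamma(\Fb_i)=\gamma(\Fc_i)$ for each $i$, hence $G_{\Delta_C}(\eta(\Ft))=G_\Delta(\Ft)$. Summing over $\Ft\in\Omega^\leq_B(\mu,k_r)$ then identifies the two sides.

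\emph{Main obstacle.} The delicate point is the bookkeeping at the middle entry $\Fb_{r+1}$ and the matching of the $\Omega^<_A$-versus-$\Omega^=_A$ stratification against the $\Omega^\leq_B$ stratification under $\eta$, in particular for patterns with $b_r=\mu_r$. No new idea beyond those already used in Lemma~\ref{lm:n-odd-dr} is required, but one must verify carefully that the parity-constrained subset described by Lemma~\ref{lm:C-total-parity} is exactly the image of $\eta$, and that the $i_\Box$ parameter on the type-C side matches the correct boxed position on the type-B side under doubling.
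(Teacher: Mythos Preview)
Your proof is correct and follows essentially the same approach as the paper: the same lemmas (Proposition~\ref{pro:G}, Lemma~\ref{pro:Omega-G-A}, Lemma~\ref{lm:C-total-parity}, Lemma~\ref{lm:n-odd-dr}) and the same halving bijection (your $\eta$ is the paper's $\varrho'^{-1}$) carry the argument. The only difference is organizational: the paper splits first by the size of $k_r$ relative to $\mu_r+2\sum_{i<r}\mu_i$ (treating the equality case by a one-line direct computation) and then by parity, whereas you split first by parity and handle all sizes of $k_r$ uniformly in the even case via $\eta$; both routes are equally valid.
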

\begin{proof}
We analyze separately three cases based on the comparison of the quantities $k_r$, $\mu_{r}+2\sum^{r-1}_{i=1}\mu_{i}$.

Suppose first that $k_{r}>\mu_{r}+2\sum^{r-1}_{i=1}\mu_{i}$. 
Then $\Omega_{B}(\mu,k_{r})=\Omega^{>}_{B}(\mu,k_{r})$ and the set $\Omega^{\leq}_{A}(\mu',k_{r})$ is empty.
We need only show that $\sum_{\Ft\in\Omega_{B}(\mu,k_{r})}G(\Ft)=0$. 
By Prop.~\ref{pro:G}, for $\Ft\in \Omega^{>}_{B}(\mu,k_{r})$, $G(\Ft)=0$ if $k_{r}-\mu_{r}$ is even. 
Suppose that $k_{r}-\mu_{r}$ is odd. 
For $\Ft=(\Fs,d_{r})\in\Omega^{>}_{B}(\mu,k_{r})$, let $\Delta(\Ft)=(\Fc_{1},\dots,\Fc_{2r-1})$ be the corresponding decorated array.
By Prop.~\ref{pro:G}, 
$$
\sum_{\Ft\in\Omega_{B}(\mu,k)}G(\Ft)=(1-q^{-1})q^{-\frac{(k_{r}-\mu_{r}+1)}{2}}\sum_{\Fs\in\Omega^{\leq}(\mu^{T})}G_{\Delta}(\Fs)q^{k_{A}(\Fs)}.
$$
By Lemma~\ref{pro:Omega-G-A},  $\sum_{\Fs\in\Omega^{\leq}(\mu^{T})}G_{\Delta}(\Fs)q^{k_{A}(\Fs)}=0$ and so 
$\sum_{\Ft\in\Omega_{B}(\mu,k)}G(\Ft)=0$, as desired.
 
Suppose that $k_{r}=\mu_{r}+2\sum^{r-1}_{i=1}\mu_{i}$.
Then
$$
\sum_{\Ft\in\Omega_{B}(\mu,k_{r})}G(\Ft)=G_{\Delta}(\mu)+\sum_{\Ft\in\Omega^{>}_{B}(\mu,k_{r})}G(\Ft)
=-q^{1-2r}+0=G_{\Delta_{C}}(\mu').
$$

Finally, suppose that $k_{r}<\mu_{r}+2\sum^{r-1}_{i=1}\mu_{i}$.
If $k_{r}-\mu_{r}$ is odd, the result follows from Lemma \ref{lm:n-odd-dr}. 
So supose that  $k_{r}-\mu_{r}$ is even. 
For $\Ft \in \Omega^{>}_{B}(\mu,k_{r})$, $G(\Ft)=0$.
We need only consider the set $\Omega^{\leq}_{B}(\mu,k_{r})$.

Let 
$$
\Omega^{\leq}_{A,e}(\mu',k_{r})=\{(b_{1},\dots,b_{r})\in \Omega^{\leq}_{A}(\mu',k_{r})\mid
b_{i}\text{ is even for }1\leq i<r\}.
$$
By Lemma~\ref{lm:C-total-parity}, $G_{\Delta_{C}}(\Fs)=0$ unless $\Fs\in\Omega^{\leq}_{A,e}(\mu',k_{r})$.
Let $\varrho'\colon \Omega^{\leq}_{A,e}(\mu',k_{r}) \to \Omega^{\leq}_{B}(\mu,k_{r})$ be the map
$$
\varrho'\colon (b_{1},\dots,b_{r})\mapsto
\ppair{\frac{b_{1}}{2},\dots,\frac{b_{r-1}}{2},b_{r}}.
$$
Then $\varrho'$ is bijective. 

Let $\Ft=(d_{1},\dots,d_{r})$ be in $\Omega^{\leq}_{B}(\mu,k_{r})$ and $\Fs=\varrho'^{-1}(\Ft)=(2d_{1},\dots,2d_{r-1},d_{r})$.
Let $\Delta(\Ft)=(\Fc_{1},\dots,\Fc_{2r-1})$ and $\Delta_{C}(\Fs)=(\Fb_{1},\dots,\Fb_{2r-1})$ be the corresponding decorated arrays. 
Then $\Fb_{i}=2\Fc_{i}$, and $\Fb_{i}$ and  $\Fc_{i}$ have the same decoration for all $i$. 
Thus we have $\gamma(\Fc_{i})=\tilde{\gamma}(\Fb_{i})$ for all $i$.
Hence $G_{\Delta_{C}}(\Fs)=G_{\Delta}(\Ft)$.
The Proposition follows. 
\end{proof}

\section{The General Case} \label{sec:general}

Our goal in this section is to reduce the general case to the totally resonant case.  A similar reduction was
carried out in the dual case in \cite{FZ}, Section 8 (and earlier for type A in \cite{BBF11}), and we both use and adapt those arguments here.  Recall that $\mu'=\upsilon(\mu)$.
Define 
 $CQ_{C}(\mu')$ (the set used to give the type C combinatorial quantities attached to $\mu'$) 
 to be the set of $(2r-1)$-tuples $(d_{1},\dots,d_{2r-1})$ of non-negative integers satisfying the inequalities
\begin{equation}
\begin{cases}
d_{j}\leq \mu'_{j} & 1\leq j\leq r,\\
d_{j+1}+d_{2r-j}\leq \mu'_{j+1}+d_{j}& 1\leq j\leq r-1.
\end{cases}
\end{equation}
(This is the set $CQ_{1}(\mu'')$ in \cite{FZ}, Eqn.~(25), with $\mu''_i=\mu'_{r+1-i}$.)
For $\Ft=(d_{1},\dots,d_{2r-1}) \in CQ_{C}(\mu')$, let $\Delta_{C}(\Ft)=(\Fc_{1},\dots,\bar{\Fc}_{1})$ (where $\bar{\Fc}_i:=\Fc_{2r-i}$ for convenience)
be the decorated array
with entries
\begin{equation*}
\bar{\Fc}_{j}=\sum^{j}_{i=1}d_{2r-i},~\Fc_{r}=\sum^{r}_{i=1}d_{2r-i}+d_{r}, \text{ and } \Fc_{j}=\Fc_{r}+\sum^{r-1}_{i=j}d_{i}, \text{ for } 1\leq j<r,
\end{equation*}
and decorations as follows:
\begin{enumerate}
\item The entry $\Fc_{j}$ is circled if $d_{j}=0$.
\item The entry $\Fc_{j}$ for $j\leq r$ is boxed if $d_{j}=\mu'_{j}$. The entry $\bar{\Fc}_{j}$ for $j<r$ is boxed if $d_{j+1}=\mu'_{j+1}+d_{j}-d_{2r-j}$.
\end{enumerate}

The weight vector ${\bf k}_{C}(\Ft)=(k_{1},\dots,k_{r})$, defined in \cite{FZ}, Eqn.~(26), is
\begin{equation*} 
k_{r}(\Ft)=\sum^{r}_{j=1} d_{2r-j} \text{ and }
k_{i}(\Ft)=\sum^{2r-1}_{j=i}d_{j}+d_{r}+\sum^{i-1}_{j=1}d_{2r-j}, \text{ for } 1\leq i<r.
\end{equation*}
(In this paper we use the phrase ``weighting vector" to avoid confusion with weights in the Lie-algebraic sense.)
Let $CQ_{C}(\mu',{\bf k}')$ be the subset of  $CQ_{C}(\mu')$ of short patterns $\Ft$ with ${\bf k}_C(\Ft)={\bf k}'$.
Define $G_{\Delta_{C}}(\Ft)=\prod^{2r}_{i=1}\tilde{\gamma}(\Fc_{i})$.
We remark that the arrays considered here are the same as those that arise in the type $C$ case treated in  \cite{FZ}, Section 6, but
the circling rule here is different than the one used in \cite{FZ}, Lemma 5.  However, it gives the same $G_{\Delta_{C}}(\Ft)$. 

Given a vector ${\bf k}=(k_1,\dots,k_r)\in \Z_{\geq0}^r$
we associate a graph on the index set $\{1,2,\dots,r\}$ as follows.
Each $i$ is a vertex.
Two vertices $i$ and $j$  are connected by an edge if and 
only if $j=i+1$ and $k_{i}=k_{j}$.  This is the graph whose edges correspond to those of  $\upsilon({\bf k})=(k_1',\dots,k_r')$ in \cite{FZ}, Section 8.
For a connected component $(i_1,i_1+1,\dots,i_2)$ of this graph, 
let $\CE=(k_{i_{1}},k_{i_{1}+1},\dots,k_{i_{2}})$  be the corresponding subsequence of ${\bf k}$,
also called a component of ${\bf k}$, and  $\CE'=(k'_{i_{1}},k'_{i_{1}+1},\dots,k'_{i_{2}})$ be the corrresponding
subsequence of $\upsilon({\bf k})$, also called a component of $\upsilon({\bf k})$.
Define $\ell_{\CE}=i_{1}$ and $r_{\CE}=i_{2}$.  Suppose the graph has $h$ connected components. 
They give a disjoint partition of $\upsilon({\bf k})$, 
$$
\upsilon({\bf k})=(2\CE_{1},\cdots,2\CE_{h-1},\upsilon(\CE_{h})),
$$
ordered by $\ell_{\CE_{i+1}}=r_{\CE_{i}}+1$ for $1\leq i<h$.

For each connected component $\CE$, $k_{i}$
satisfies the properties that 
$k_{i}=k_{j}$ for all $\ell_{\CE}\leq i,j\leq r_{\CE}$; either $k_{\ell_{\CE}-1}\ne k_{\ell_{\CE}}$ or $\ell_{\CE}=1$; and either $k_{r_{\CE}}\ne k_{r_{\CE}+1}$ or $r_{\CE}=r$. 
If $\ell_{\CE}=1$, let $a_{\CE}=0$, and if $r_{\CE}=r$, let $b_{\CE}=0$.  Otherwise let
$a_{\CE}=|k_{\ell_{\CE}-1}-k_{\ell_{\CE}}|$ and $b_{\CE}=|k_{r_{\CE}}-k_{r_{\CE}+1}|$.
If $r_{\CE}\neq \ell_{\CE}$, define $\mu(\CE)=(\mu(\CE)_{1},\mu(\CE)_{2},\dots,\mu(\CE)_{m(\CE)})$ by
specifying that $\mu(\CE)_{i}=\mu_{\ell_{\CE}+i}$ for all $1<i< m(\CE)$ and
that
$$\mu(\CE)_{1}=\begin{cases}\mu_{\ell_{\CE}}&\text{if 
$ k_{\ell_{\CE}-1}> k_{\ell_{\CE}}$}\\
\mu_{\ell_{\CE}}-a_{\CE}&\text{if  
$ k_{\ell_{\CE}-1}< k_{\ell_{\CE}}$,}
\end{cases}
$$
$$
\mu(\CE)_{m(\CE)}=\begin{cases}\mu_{r_{\CE}}-b_{\CE}&\text{if 
$k_{r_{\CE}}>k_{r_{\CE}+1}$}\\
\mu_{r_{\CE}}&\text{if $ k_{r_{\CE}}<k_{r_{\CE}+1}$.}
\end{cases}
$$

When $r_{\CE}=\ell_{\CE}\ne r$ (i.e.\ $m(\CE)=1$ and $\CE\ne \CE_{h}$), define
$$
\mu(\CE)_{1}=\begin{cases}
\mu_{r_{\CE}}-b_{\CE} &\text{if 
$ k_{r_{\CE}}> k_{r_{\CE}+1}$ and $ k_{\ell_{\CE}-1}> k_{\ell_{\CE}}$}\\
\mu_{r_{\CE}}-b_{\CE}-a_{\CE} &\text{if 
$ k_{r_{\CE}}> k_{r_{\CE}+1}$ and $ k_{\ell_{\CE}-1}< k_{\ell_{\CE}}$}\\
\mu_{r_{\CE}} &\text{if $ k_{r_{\CE}}< k_{r_{\CE}+1}$ and $ k_{\ell_{\CE}-1}> k_{\ell_{\CE}}$}\\
\mu_{r_{\CE}}-a_{\CE} &\text{if $ k_{r_{\CE}}< k_{r_{\CE}+1}$ and $ k_{\ell_{\CE}-1}< k_{\ell_{\CE}}$.}   
\end{cases}
$$
When $r_{\CE}=\ell_{\CE}=r$ (i.e.\ $m(\CE)=1$ and $\CE=\CE_{h}$), define
$$
(\mu_{\CE})_{1}=\begin{cases}
\mu_{r}-2a_{\CE}& \text{ if } k_{r-1}<k_{r}\\
\mu_{r}& \text{ if } k_{r-1}>k_{r}.
\end{cases}
$$

Following the treatment of type C in  \cite{FZ}, Section 8, let
$$
\Xi'_{\upsilon({\bf k})}=\cpair{{\bf x}'=(x'_{1},\dots,x'_{h})\in\Z^{h}_{\geq 0}\mid \sum^{h}_{i=1}x'_{i}+\sum^{h-1}_{i=1}(b_{\CE_{i}}-k_{r_{\CE_{i}}}+k_{r_{\CE_{i}}+1})=k_{r}}.
$$
Then there is a bijective map (see \cite{FZ}, Section 8)
$$
\Psi'_{\upsilon({\bf k})}\colon CQ_{C}(\mu',\upsilon({\bf k}))\mapsto
\bigsqcup_{{\bf x}'\in\Xi'_{\upsilon({\bf k})}}\prod^{h-1}_{i=1}\Omega^{\leq}_{A}(2\mu(\CE_{i}),x'_{i})\times\Omega_{A}(\upsilon(\mu(\CE_{h})),x'_{h}).
$$

\begin{lemma}\label{lm:odd-parity}
Let $\Ft=(d_{1},\dots,d_{2r-1})$ be in $CQ_{C}(\mu',\upsilon({\bf k}))$.
Then $G_{\Delta_{C}}(\Ft)=0$ unless $d_{i}$ and $d_{2r-i}$ are even for all $i$, $1\leq i< \ell_{\CE_{h}}$.
Moreover, $x'_{i}$ in the associated vector ${\bf x}'$ under the map $\Psi'_{\upsilon({\bf k})}$ is even for $i$, $1\leq i<h$.
\end{lemma}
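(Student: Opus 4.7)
The plan is to prove part 1 by induction on $j$, establishing that $\Fc_j$ (and hence by pairing $\bar{\Fc}_{j-1}$) is even for all $1 \leq j \leq \ell_{\CE_h}$; once this is shown, the identities $d_i = \Fc_i - \Fc_{i+1}$ and $d_{2r-i} = \bar{\Fc}_i - \bar{\Fc}_{i-1}$ immediately yield that both $d_i$ and $d_{2r-i}$ are even for $i < \ell_{\CE_h}$. The basic ingredients are the parity identities $\Fc_j \equiv \bar{\Fc}_{j-1} \pmod{2}$ for $1 \leq j \leq r$ (for $j < r$, from $\Fc_j + \bar{\Fc}_{j-1} = k_j(\Ft) = 2k_j$; for $j = r$, from $\Fc_r = \bar{\Fc}_{r-1} + 2d_r$), the congruence $d_j \equiv d_{2r-j} \pmod{2}$ for $1 \leq j \leq r-1$ (since $d_j - d_{2r-j} = 2(k_j - k_{j+1})$), and the equality $d_j = d_{2r-j}$ whenever $j$ and $j+1$ lie in the same component. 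The base case $j = 1$ is immediate: $\Fc_1 = 2k_1$ and $\bar{\Fc}_0 = 0$.

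For the inductive step, assume $\Fc_i, \bar{\Fc}_{i-1}$ are even for $i \leq j$ with $j < \ell_{\CE_h}$; it suffices to show $d_j$ is even, since then $\Fc_{j+1} = \Fc_j - d_j$ and $\bar{\Fc}_j = \bar{\Fc}_{j-1} + d_{2r-j}$ are both even. If $\Fc_j$ is circled then $d_j = 0$; if $\Fc_j$ is boxed then $d_j = \mu'_j = 2\mu_j$ (using $j < \ell_{\CE_h} \leq r$ so $j < r$); in both cases $d_j$ is even. Otherwise $\Fc_j$ is unboxed and uncircled, and $G_{\Delta_C}(\Ft) \neq 0$ already forces $\Fc_j$ to be even (consistent with the IH); I then examine the decoration of $\bar{\Fc}_j$. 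If $\bar{\Fc}_j$ is circled then $d_{2r-j} = 0$, so $d_j$ is even by parity linking; if $\bar{\Fc}_j$ is unboxed and uncircled then $\bar{\Fc}_j$ is forced to be even (by $G_{\Delta_C}(\Ft) \neq 0$), and combined with $\bar{\Fc}_{j-1}$ even from the IH this yields $d_{2r-j}$ even and hence $d_j$ even.

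The remaining subcase---the main technical obstacle---is when $\Fc_j$ is unboxed-uncircled while $\bar{\Fc}_j$ is boxed, i.e., $d_{j+1} = \mu'_{j+1} + d_j - d_{2r-j}$. Here, as $j < \ell_{\CE_h}$ places $j$ inside a non-terminal component $\CE_i$ (with $i < h$), the equality $d_j = d_{2r-j}$ (valid when $j < r_{\CE_i}$) gives $d_{j+1} = \mu'_{j+1}$, so $\Fc_{j+1}$ is also boxed. Supposing toward contradiction that $d_j$ is odd, both $\Fc_{j+1}$ and $\bar{\Fc}_j$ are odd-boxed (with $\tilde{\gamma} = q^{-1/2}$), and iterating the case analysis at the next index shows that the $\Fc$-side and $\bar{\Fc}$-side entries are again forced to be boxed---otherwise the unboxed-uncircled-odd configuration gives $\tilde{\gamma} = 0$---so the saturation $d_l = \mu'_l$ cascades through the remaining indices of $\CE_i$. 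Tracking this cascade across successive component boundaries, where the discrepancy $d_{r_{\CE_{i'}}} - d_{2r - r_{\CE_{i'}}} = 2(k_{r_{\CE_{i'}}} - k_{r_{\CE_{i'}}+1})$ reshuffles the saturation conditions, one eventually arrives at the first index of $\CE_h$; here the transition from doubled to undoubled weighting in $\upsilon(\mathbf{k})$ (since $\upsilon(\mathbf{k})_j = 2k_j$ for $j<r$ but $\upsilon(\mathbf{k})_r = k_r$) destroys the structural symmetry sustaining the odd cascade, producing an entry whose decoration admits no nonzero $\tilde{\gamma}$ and contradicting $G_{\Delta_C}(\Ft) \neq 0$.

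For part 2, the map $\Psi'_{\upsilon(\mathbf{k})}$ attaches to each non-terminal component $\CE_i$ (with $i < h$) a type-A short pattern $\Ft^{(i)} \in \Omega^{\leq}_A(2\mu(\CE_i), x'_i)$ whose entries are built from the $d_j$'s (and their differences) for $j$ in or at the left boundary of $\CE_i$; all such indices satisfy $j < \ell_{\CE_h}$. By part 1, each of these $d_j$ is even, hence every component of $\Ft^{(i)}$ is even, and consequently $x'_i = k_A(\Ft^{(i)})$ is a sum of even integers and is itself even.
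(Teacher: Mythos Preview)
Your forward induction is clean until the ``hard subcase'' (when $\Fc_j$ is generic and $\bar{\Fc}_j$ is boxed), and there the argument does not close. You claim that, once $\Fc_{j+1}$ and $\bar{\Fc}_j$ are odd, the subsequent entries are ``forced to be boxed---otherwise the unboxed-uncircled-odd configuration gives $\tilde\gamma=0$.'' But an odd entry can also be \emph{circled} and unboxed, in which case $\tilde\gamma=1$ and there is no contradiction at that entry. Concretely, if $j=r_{\CE_i}$ is a component boundary with $k_j<k_{j+1}$, then $\bar{\Fc}_j$ boxed reads $d_{j+1}=\mu'_{j+1}+d_j-d_{2r-j}=\mu'_{j+1}-2(k_{j+1}-k_j)$, and one can perfectly well have $d_{j+1}=0$, so that $\Fc_{j+1}$ is circled and odd. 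Your cascade of boxes is therefore not forced, and the asserted contradiction at $\ell_{\CE_h}$ is not established. The appeal to ``the transition from doubled to undoubled weighting in $\upsilon(\mathbf{k})$'' is also misplaced: that transition is at index $r$, not at $\ell_{\CE_h}$, and in any case $\Fc_r=\bar{\Fc}_{r-1}+2d_r$ shows the parity pairing $\Fc_l\equiv\bar{\Fc}_{l-1}\pmod 2$ persists there, so no asymmetry is introduced. Finally, you explicitly flag ``(valid when $j<r_{\CE_i}$)'' but never return to treat $j=r_{\CE_i}$.

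The paper avoids this difficulty by running the induction in the opposite direction and anchoring it at $\ell_{\CE_h}$ rather than at $1$. It invokes a structural fact from \cite{FZ}, Section~8: at the boundary index, one of $\bar{\Fc}_{\ell_{\CE_h}-1}$ and $\Fc_{\ell_{\CE_h}}$ is \emph{not boxed}; since their sum is even, both are even. From this anchor the paper inducts \emph{downward} on $i$ along the $\bar{\Fc}$ side only: if $\bar{\Fc}_{i-1}$ is unboxed it is even; if $\bar{\Fc}_{i-1}$ is boxed then $d_i=2\mu_i+d_{i-1}-d_{2r-i+1}$, and since $d_{i-1}\equiv d_{2r-i+1}\pmod 2$ this gives $d_i$ (hence $d_{2r-i}$) even, so $\bar{\Fc}_{i-1}=\bar{\Fc}_i-d_{2r-i}$ is even. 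The backward direction never encounters your hard subcase, because the boxed case feeds directly on the inductive hypothesis rather than requiring a look-ahead cascade. The missing ingredient in your argument is precisely this boundary fact from \cite{FZ}; without it, the forward cascade has no place to terminate.
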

\begin{proof}
Since $d_{i}\equiv d_{2r-i}\bmod{2}$ for all $i$, it is enough to show that $d_{2r-i}$ is even for all $i$, $1\leq i< \ell_{\CE_{h}}$.
If $h=1$, then $\ell_{\CE_{h}}=1$ and the statement holds. 
Suppose that $h>1$ and $G_{\Delta_{C}}(\Ft)\ne 0$.
By \cite{FZ}, Section 8, one of the entries $\bar{\Fc}_{\ell_{\CE_{h}}-1}$ and $\Fc_{\ell_{\CE}}$ is not boxed, and then 2 divides $\bar{\Fc}_{\ell_{\CE_{h}}-1}$ or $\Fc_{\ell_{\CE}}$.
Since $\bar{\Fc}_{\ell_{\CE_{h}}-1}+\Fc_{\ell_{\CE}}=2k_{\ell_{\CE_{h}}-1}$, $\bar{\Fc}_{\ell_{\CE_{h}}-1}$ and $\Fc_{\ell_{\CE}}$ are even. 
Assume that $\bar{\Fc}_{i}$ is  even for some $i\leq \ell_{\CE_{h}}-1$.
If $\bar{\Fc}_{i-1}$ is unboxed, then $2n$ divides $\bar{\Fc}_{i-1}$ with $n$, the degree of the cover in \cite{FZ}, equal to 1 here, and so $\bar{\Fc}_{i-1}$ is even. 
If $\bar{\Fc}_{i-1}$ is boxed, then $d_{2r-i}=2\mu_{i}+d_{i-1}-d_{2r-i+1}$ is even and $\bar{\Fc}_{i-1}=\bar{\Fc}_{i}-d_{2r-i+1}$ is even.
In sum, if $\bar{\Fc}_{i}$ is even, then $\bar{\Fc}_{i-1}$ is even. 
By induction, $\bar{\Fc}_{i}$ is even for all $i$, $1\leq i\leq \ell_{\CE_{h}}-1$.
By definition, $d_{2r-i}$ is even for all $i$, $1\leq i\leq \ell_{\CE_{h}}-1$ and $x'_{i}$ is even for all $i$, $1\leq i\leq h-1$.
\end{proof}

Next we show that we only need to consider vectors of the form $\upsilon({\bf k})$,
that is, those with the first $r-1$ entries even, in evaluating $G_{\Delta_{C}}(\Ft)$.
For an arbitrary vector ${\bf k}\in\Z^r_{\geq0}$, 
write $\ell({\bf k}):=\ell_{\CE_h}$, the index such that
$\ell_{\CE_h}=r$ when $k_{r-1}\ne 2k_r$, or
$k_{\ell_{\CE_h}-1}\ne k_{\ell_{\CE_h}}$ and $k_j=2k_r$ for $\ell_{\CE_h}\leq j<r$ when $k_{r-1}=2k_r$.

\begin{lemma}\label{lm:k-even}
Let ${\bf k}=(k_{1},\dots,k_{r})$ in $\Z^{r}_{\geq 0}$.
If $\Ft\in CQ_{C}(\mu',{\bf k})$, then $G_{\Delta_{C}}(\Ft)=0$ 
unless $k_{i}$ is even for all $i$, $1\leq i\leq r-1$.
\end{lemma}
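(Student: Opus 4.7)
The plan is to prove the contrapositive: assuming $G_{\Delta_C}(\Ft) \neq 0$, I show that every $k_i$ with $1 \leq i \leq r-1$ is even, i.e., ${\bf k} = \upsilon({\bf k}')$ for some ${\bf k}'$.

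First I would perform a parity reduction. Starting from $k_i(\Ft) = \sum_{j=i}^{2r-1} d_j + d_r + \sum_{j=1}^{i-1} d_{2r-j}$ for $i<r$, a direct calculation modulo $2$ gives
$$k_i(\Ft) \equiv \sum_{j=i}^{r-1} d_j + \sum_{j=r+1}^{2r-i} d_j \pmod{2},$$
so that $k_i - k_{i+1} \equiv d_i + d_{2r-i} \pmod 2$ for $1 \leq i \leq r-2$ and $k_{r-1} \equiv d_{r-1} + d_{r+1} \pmod 2$. Hence it is equivalent to prove that $d_j \equiv d_{2r-j} \pmod{2}$ for all $1 \leq j \leq r-1$ whenever $G_{\Delta_C}(\Ft) \ne 0$.

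Next I would adapt the argument used in the proof of Lemma~\ref{lm:odd-parity} (itself based on \cite{FZ}, Section 8) and proceed by downward induction on $j$ from $r-1$ to $1$. The key observation is a parity propagation principle for entries of $\Delta_C(\Ft)$: if $a$ is unboxed and uncircled then $\tilde\gamma(a) \ne 0$ forces $a$ to be even (the odd case gives a vanishing contribution); if $a$ is circled, the corresponding step $d_\ell$ is $0$ and parity is preserved; if $a$ is boxed but not circled, the defining equality of the boxing rule, combined with the fact that $\mu'_j = 2\mu_j$ is even for $j<r$, gives a parity identity among the relevant $d_\ell$'s. The base of the induction comes from $\Fc_r - \bar\Fc_{r-1} = 2d_r$, so $\Fc_r \equiv \bar\Fc_{r-1} \pmod 2$; analyzing the decorations of this pair (via the \cite{FZ} Section 8 argument that at least one of them is unboxed) yields $d_{r-1} \equiv d_{r+1} \pmod 2$. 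For the inductive step, knowing $d_{j'} \equiv d_{2r-j'} \pmod 2$ for $j' > j$, the case analysis on the pair $(\Fc_j, \bar\Fc_j)$ together with the boxing equation $d_{j+1} = \mu'_{j+1} + d_j - d_{2r-j}$ (now with $\mu'_{j+1}$ even) yields $d_j \equiv d_{2r-j} \pmod 2$.

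The main obstacle is the inductive step, where the boxing condition for $\bar\Fc_j$ couples the three parities of $d_{j+1}$, $d_j$, and $d_{2r-j}$, so one must combine the decoration information on both halves of the array with the inductive hypothesis to extract the desired equality. A secondary delicate point is the behavior near the center: since $\mu'_r = \mu_r$ need not be even (in contrast to $\mu'_j = 2\mu_j$ for $j<r$), the parity information is transmitted across the middle not through $\mu'_r$ but through the factor of $2$ in $\Fc_r - \bar\Fc_{r-1} = 2d_r$.
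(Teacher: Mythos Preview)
Your parity reduction (reformulating the statement as $d_j\equiv d_{2r-j}\pmod 2$ for $1\le j\le r-1$) is correct, and the propagation principle for undecorated entries is the right tool.  However, the inductive scheme you outline does not close.

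For the base case you assert that analyzing the decorations of the pair $(\Fc_r,\bar\Fc_{r-1})$, together with $\Fc_r-\bar\Fc_{r-1}=2d_r$, yields $d_{r-1}\equiv d_{r+1}$.  But the decorations of this pair constrain only $d_r,d_{r+1}$ and the combination $d_{r-1}-d_{r+1}$ in the ``both boxed'' case; when, say, both are undecorated you learn only that $\Fc_r$ and $\bar\Fc_{r-1}$ are even, which carries no information about the parities of $d_{r-1}$ and $d_{r+1}$ separately.  The \cite{FZ}, Section~8 fact you invoke (``at least one of them is unboxed'') is a statement about entries at a component boundary, and $(\Fc_r,\bar\Fc_{r-1})$ is such a boundary only when $d_{r-1}\ne d_{r+1}$; when $d_{r-1}=d_{r+1}$ both may well be boxed.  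The inductive step has the same defect: knowing $d_{j'}\equiv d_{2r-j'}$ for $j'>j$ and the decorations of $(\Fc_j,\bar\Fc_j)$ does not pin down $d_j+d_{2r-j}\pmod 2$.  For instance, if $\bar\Fc_j$ is boxed you get $d_{j+1}\equiv d_j+d_{2r-j}$ (using $\mu'_{j+1}$ even), which is useless unless you already know $d_{j+1}$ is even---information not provided by your hypothesis.

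The paper's argument avoids this by \emph{not} trying to prove $d_j\equiv d_{2r-j}$ directly.  Instead it singles out the index $\ell({\bf k})$ (the left end of the component containing $r$), where the \cite{FZ} argument genuinely applies to give $\Fc_{\ell({\bf k})}$ and $\bar\Fc_{\ell({\bf k})-1}$ even; it then propagates evenness of $\Fc_i$ downward to all $i\le\ell({\bf k})$ (this works because the boxing rule for $\Fc_i$ reads $d_i=2\mu_i$, purely on the $\Fc$-side).  Finally it uses $k_i=\Fc_i+\bar\Fc_{i-1}$ and a separate case split on $\bar\Fc_{i-1}$ to show that for every $i$ either $k_i$ is even or $k_{i-1}-k_i$ is even, which together with $k_1=\Fc_1$ even forces all $k_i$ even.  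The key point your proposal is missing is this boundary index $\ell({\bf k})$ as the anchor of the induction.
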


\begin{proof}
If $h=1$, then $k_i=2k_{r}$ for all $i$, $1\leq i<r$. So we need only consider the case $h>1$ and $G_{\Delta_C}(\Ft)\ne 0$.
For $\Ft\in CQ_{C}(\mu',{\bf k})$, let $\Delta_{C}(\Ft)=(\Fc_{1},\dots,\bar{\Fc}_{1})$.
First, we show that $\Fc_i$ is even for all $i$, $1\leq i\leq \ell({\bf k})$.
 When $i=\ell({\bf k})$, we have $\Fc_i+\bar{\Fc}_{i-1}=2k_r$.
Since one of the entries $\Fc_i$ and $\bar{\Fc}_{i-1}$ is not boxed, either $\Fc_i$ or $\bar{\Fc}_{i-1}$ is even.
Since $\Fc_i+\bar{\Fc}_{i-1}=2k_r$, we conclude that in fact both $\Fc_i$ and $\bar{\Fc}_{i-1}$ are even. 
Suppose that $i<\ell({\bf k})$. Then $i<r$.
If $\Fc_i$ is not boxed, then 2 divides $\Fc_i$ by \cite{FZ}, Lemma 9.
If $\Fc_i$ is boxed, then necessarily $\Fc_i-\Fc_{i+1}=d_i=2\mu_i$, and hence $\Fc_i\equiv\Fc_{i+1}\bmod 2$.
Hence $\Fc_i$ is even for all $i$, $1\leq i\leq\ell({\bf k})$
and $d_i$ is even for all $i$, $1\leq i<\ell({\bf k})$.

Now, since $\Fc_1=k_1$ is even,
it is enough to show that for $1<i<r$ either $k_i$ or $k_{i-1}-k_i$ is even. 
If $\bar{\Fc}_{i-1}$ is not boxed, then $\bar{\Fc}_{i-1}$ is even.
Since $2\mid \Fc_{i}$, we see that $k_i=\Fc_i+\bar{\Fc}_{i-1}$ is even.
If $\bar{\Fc}_{i-1}$ is boxed, then $d_i=2\mu_i+k_{i-1}-k_i$.
Since $d_i$ is even for $1\leq i<\ell({\bf k})$, $k_{i-1}-k_i$ is even. 
When $\ell({\bf k})\leq i<r$, $k_i=2k_r$ is even. 
\end{proof}

Define $\Xi_{\bf k}$ to be the subset of $\Z^{h}_{\geq 0}$ consisting of the vectors $(x_{1},x_{2},\cdots,x_{h})$ satisfying
$$
x_{i}\leq \sum^{m(\CE_{i})}_{j=1}\mu(\CE_i)_{j} \text{ for $1\leq i<h$, and }
\sum^{h-1}_{i=1}(2x_{i}+b_{\CE_{i}})+x_{h}=k_{1}.
$$
Similarly to \cite{FZ},  Lemma 18, we have a bijective map
$$
\Psi_{{\bf k}}\colon CQ_{1}(\mu,{\bf k})\to \bigsqcup_{{\bf x}\in\Xi_{\bf k}}\prod^{h-1}_{i=1}\Omega^{\leq}_{A}(\mu(\CE_{i}),x_{i})\times\Omega_{B}(\mu(\CE_{h}),x_{h})
$$
given by $\Psi_{\bf k}((d_{1},\dots,d_{2r-1}))=(\Ft(\CE_{1}),\dots,\Ft(\CE_{r}))$, where
$$
\Ft(\CE_{i})=(d_{\ell_{\CE_{i}}},d_{\ell_{\CE_{i}}+1},\dots,d_{r_{\CE_{i}}-1}, d'_{r_{\CE_{i}}}).
$$
Here $d'_i=\min\{d_i,d_{2r-i}\}$ as in \cite{FZ}.
(In particular, if $\ell_{\CE}=r_{\CE}=1$, then $\Ft(\CE)=(d'_{1})$.)
It is easy to see that $\Psi_{{\bf k}}$ restricts to a bijection from $CQ^{\leq}_{1}(\mu,{\bf k})$ to
$$
\bigsqcup_{{\bf x}\in\Xi_{\bf k}}\prod^{h-1}_{i=1}\Omega^{\leq}_{A}(\mu(\CE_{i}),x_{i})\times\Omega^{\leq}_{B}(\mu(\CE_{h}),x_{h}).
$$

Also, let $v$ be the injective map $\upsilon$ on $\Xi_{\bf k}$  given by
$$
\upsilon\colon (x_{1},\dots,x_{h-1},x_{h}) \mapsto
(2x_{1},\dots,2x_{h-1},x_{h}).
$$
It is easy to verify that $\upsilon({\bf x})$ is in $\Xi'_{\upsilon({\bf k})}$.
By Lemma~\ref{lm:odd-parity}, $G_{\Delta_{C}}(\Ft)$ is supported on the patterns $\Ft$ whose associated vector ${\bf x}'$
is in $\upsilon(\Xi_{{\bf k}})$.

For $\CE=\CE_i$ with $r_\CE<r$, define
$$
c_\CE=\begin{cases}
\bar{c}_{r_\CE} & \text{ if } k_{r_\CE}>k_{r_\CE+1},\\
c_{r_\CE} & \text{ if } k_{r_\CE}<k_{r_\CE+1}.
\end{cases}
$$
No entry $c_{\CE_{i}}$   is either boxed  or circled 
except  when $\mu_{r_{\CE}}=k_{r_{\CE}}-k_{r_{\CE}+1}$.
For ${\bf x}=(x_{1},\dots,x_{h})$ in $\Xi_{\bf k}$, by \cite{FZ}, Section 8,  we have
$$
\sum_{\Ft\in\prod^{h-1}_{i=1}\Omega^{\leq}_{A}(2\mu(\CE_{i}),2x_{i})\times \Omega^{\leq}_{A}(\upsilon(\mu(\CE_{h})),x_{h})}
G_{\Delta_C}(\Ft)=
\prod^{h-1}_{i=1}
\sum_{\Ft(\CE_{i})}G_{\Delta_C}(\Ft(\CE_{i}))
\cdot\tilde{\gamma}(c_{\CE_{i}}) 
\times \sum_{\Ft(\CE_{h})} G_{\Delta_{C}}(\Ft(\CE_{h})),
$$
where $\Ft(\CE_{i})$ runs over $\Omega^{\leq}_{A}(2\mu(\CE_{i}),2x_{i})$ when $i\ne h$ and $\Omega_{A}(\upsilon(\mu(\CE_{h})),x_{h})$ when $i=h$.  Similarly,
$$
\sum_{\Ft\in\prod^{h-1}_{i=1}\Omega^{\leq}_{A}(\mu(\CE_{i}),x_{i})\times\Omega_{B}(\mu(\CE_{h}),x_{h})}
G(\Ft)=
\prod^{h-1}_{i=1}
\sum_{\Ft(\CE_{i})}G_{\Delta}(\Ft(\CE_{i}))
\cdot \gamma(\Fc_{\CE_{i+1}})
\times\sum_{\Ft(\CE_{h})}G(\Ft(\CE_{h})),
$$
where now $\Ft(\CE_{i})$ runs over $\Omega^{\leq}_{A}(\mu(\CE_{i}),x_{i})$ when $i\ne h$ and $\Omega_{B}(\mu(\CE_{h}),x_{h})$ when $i=h$.

By Lemma~\ref{lm:odd-parity}, 
for $i\ne h$,
$G_{\Delta_C}(\Ft'(\CE_{i}))$ is supported on the patterns $\Ft'(\CE_{i})$
which are in the image of the map  
 $\Ft(\CE_{i})\mapsto 2\Ft(\CE_{i})$ from $\Omega^{\leq}_{A}(\mu(\CE_{i}),x_{i})$ 
to $\Omega^{\leq}_{A}(2\mu(\CE_{i}),2x_{i})$.
Also by definition, $G_{\Delta}(\Ft(\CE_{i}))=G_{\Delta_C}(2\Ft(\CE_{i}))$.
Suppose $\CE=\CE_{h}$.
Let $\Ft(\CE)=(d_{1},\dots,d_{m(\CE)})$.
If $x_{h}-\mu_{r}$ is even then, by Lemma~\ref{lm:C-total-parity}, $G_{\Delta_{C}}(\Ft(\CE))=0$ unless $d_{i}$ is even for $1\leq i<m(\CE)$.
Define the map $\upsilon$ from $\Omega^{\leq}_{B}(\mu(\CE),x_{h})$ to $\Omega^{\leq}_{A}(\mu(\CE'),x_{h})$ by
$
\upsilon\colon (b_{1},\dots,b_{m(\CE)})\to 
(2b_{1},\dots,2b_{m(\CE)-1},b_{m(\CE)}).
$
Then 
$G(\Ft(\CE))=G_{\Delta_{C}}(\upsilon(\Ft(\CE)))$ by Prop.~\ref{pro:G}.
If $x_{h}-\mu_{r}$ is odd, then $x_{h}$ is not maximal
and, by Prop.~\ref{pro:odd}, 
$$
\sum_{\Ft(\CE)\in \Omega_{B}(\mu(\CE),x_{h})}G (\Ft(\CE))=\sum_{\Ft(\CE)\in \Omega^{\leq}_{A}(\upsilon(\mu(\CE)),x_{h})}G_{\Delta_{C}}(\Ft(\CE)).
$$
Then, for all $i$
\begin{equation*} 
\sum_{\Ft(\CE_{i})}G_{\Delta_C}(\Ft(\CE_{i}))=\sum_{\Ft(\CE_{i})}G(\Ft(\CE_{i})),
\end{equation*}
where on  the left-hand side, $\Ft(\CE_{i})$ runs over $\Omega^{\leq}_{A}(2\mu(\CE_{i}),2x_{i})$ when $i\ne h$ and over $\Omega_{A}(\upsilon(\mu(\CE_{h})),x_{h})$ when $i=h$;
on the right-hand side, $\Ft(\CE_{i})$ runs over $\Omega^{\leq}_{A}(\mu(\CE_{i}),x_{i})$ when $i\ne h$ and over $\Omega_{B}(\mu(\CE_{h}),x_{h})$ when $i=h$.

Since
$$
\sum_{\substack{\Ft\in CQ_{1}(\mu)\\ k_{B}(\Ft)={\bf k}}}G (\Ft)=\sum_{{\bf x}\in\Xi_{\bf k}}\sum_{\Ft}G(\Ft)
\quad\text{and}\quad
\sum_{\substack{\Ft\in CQ_{C}(\upsilon(\mu))\\ k_{C}(\Ft)=\upsilon({\bf k})}}G_{\Delta_C}(\Ft)=
\sum_{\upsilon({\bf x})\in\upsilon(\Xi_{{\bf k}})}\sum_{\Ft}G_{\Delta_C}(\Ft),
$$
we arrive at the following identity.
\begin{proposition} \label{pro:general}
$$
\sum_{\substack{\Ft\in CQ_{1}(\mu)\\ k_{B}(\Ft)={\bf k}}}G(\Ft)
=\sum_{\substack{\Ft\in CQ_{C}(\upsilon(\mu))\\ k_{C}(\Ft)=\upsilon({\bf k})}}G_{\Delta_C}(\Ft).
$$
\end{proposition}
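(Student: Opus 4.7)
The plan is to reduce both sides to products over connected components of the graph attached to ${\bf k}$, and then match these factorizations component by component, using the case analysis already developed in this section.

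First I would apply the bijection $\Psi_{\bf k}$ to rewrite the left-hand side as a sum indexed by ${\bf x}\in\Xi_{\bf k}$, with each short pattern decomposed into pieces $\Ft(\CE_i)$ in $\Omega^{\leq}_A(\mu(\CE_i),x_i)$ for $i<h$ together with $\Ft(\CE_h)\in\Omega_B(\mu(\CE_h),x_h)$. Under this decomposition the weight $G$ factors as a product of per-component weights times boundary decoration factors $\gamma(\Fc_{\CE_{i+1}})$. In parallel I would apply $\Psi'_{\upsilon({\bf k})}$ to the right-hand side, decomposing patterns into $\Omega^{\leq}_A(2\mu(\CE_i),x'_i)$ for $i<h$ and $\Omega_A(\upsilon(\mu(\CE_h)),x'_h)$, with $G_{\Delta_C}$ factoring through boundary factors $\tilde\gamma(c_{\CE_i})$. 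Lemma~\ref{lm:odd-parity} then shows that $G_{\Delta_C}$ vanishes unless ${\bf x}'\in\upsilon(\Xi_{\bf k})$, so the right-hand sum is effectively indexed by the image of $\upsilon$, matching the indexing of the left.

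The middle components $i<h$ match up by a direct doubling argument: the map $\Ft(\CE_i)\mapsto 2\Ft(\CE_i)$ sends $\Omega^{\leq}_A(\mu(\CE_i),x_i)$ to $\Omega^{\leq}_A(2\mu(\CE_i),2x_i)$, preserves all box and circle decorations, and since each doubled entry is even one has $\gamma(\Fc)=\tilde\gamma(2\Fc)$ entry by entry. The boundary factors $\gamma(\Fc_{\CE_{i+1}})$ and $\tilde\gamma(c_{\CE_i})$ are identified under the same doubling, so the per-component factorizations agree on this range.

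The terminal component $\CE_h$ is the crux, and I would split it by the parity of $x_h-\mu_r$. When $x_h-\mu_r$ is even, Lemma~\ref{lm:C-total-parity} forces the nonzero $\Delta_C$-patterns to be of the form $\upsilon(\Ft(\CE_h))$, and Proposition~\ref{pro:G}(2) gives $G(\Ft(\CE_h))=G_{\Delta_C}(\upsilon(\Ft(\CE_h)))$ entry by entry. The odd case is where the main difficulty lies: there is no term-by-term match, since the B-side involves a nontrivial quadratic Gauss evaluation from Proposition~\ref{pro:G}(3). Here I would invoke Proposition~\ref{pro:odd} wholesale, which packages exactly the totally resonant odd-parity identity established in Section~\ref{sec:T-R} via the delicate rearrangement of Lemma~\ref{lm:n-odd-dr}. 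Multiplying the resulting component identities and summing over ${\bf x}\in\Xi_{\bf k}$ then yields the claim.
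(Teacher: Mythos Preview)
Your proposal is correct and follows essentially the same route as the paper: decompose both sides via $\Psi_{\bf k}$ and $\Psi'_{\upsilon({\bf k})}$, use Lemma~\ref{lm:odd-parity} to restrict the $C$-side to $\upsilon(\Xi_{\bf k})$, match the non-terminal components by doubling, and handle the terminal component by the parity split with Lemma~\ref{lm:C-total-parity}/Proposition~\ref{pro:G} in the even case and Proposition~\ref{pro:odd} in the odd case. One small point to make explicit in the even-parity terminal step: the $B$-side sum ranges over all of $\Omega_B(\mu(\CE_h),x_h)$, so you also need Proposition~\ref{pro:G}(3) to kill the contribution from $\Omega^{>}_B$ before the term-by-term match via $\upsilon$ on $\Omega^{\leq}_B$ applies.
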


\section{Proof of the Main Theorem}\label{characters}

Using these results, we now prove Theorem~\ref{main-theorem}.
We first combine Proposition~\ref{pro:general} and Theorem~\ref{thm1} to  obtain an inductive formula for
the prime power coefficients $H(p^{\bf k};p^{\lam})$ that arise in the Whittaker coefficient $W_{\bf m}(f,{\bf s})$.
This gives the following equality.
\begin{lemma}\label{lm:inductive-H}
Let ${\bf k}$, $\lam$ be in $\Z^r_{\geq 0}$.  Then
$$
H(p^{\bf k};p^{\lambda})=\sum_{\substack{{\bf k}',{\bf k}''\\ {\bf k}'+(0,{\bf k}'')=\upsilon({\bf k})}}
\sum_{\substack{\Ft\in CQ_{C}(\upsilon(\lam+\rho))\\ k_{C}(\Ft)={\bf k}'}}q^{k'_{r}+\frac{1}{2}\sum^{r-1}_{i=1}k'_{i}}G_{\Delta_C}(\Ft)H(p^{\upsilon^{-1}({\bf k}'')},p^{\nu})
$$
where the outer sum is over vectors ${\bf k}'=(k'_{1},\dots,k'_{r})$, 
 and ${\bf k}''=(k''_{1},\dots,k''_{r-1})$ of non-negative integers
 with $k'_{i}$  even for $1\leq i\leq r-1$ such that ${\bf k}'+(0,{\bf k}'')=\upsilon({\bf k})$  and 
$$
\nu=(\lam_{2}+\frac{k'_{1}}{2}+\frac{k'_{3}}{2}-k'_{2},\dots,\lam_{r-1}+\frac{k'_{r-2}}{2}+k'_{r}-k'_{r-1},\lam_{r}+k'_{r-1}-2k'_{r}).
$$
\end{lemma}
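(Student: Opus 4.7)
The plan is to specialize Theorem~\ref{thm1} at $C_i = p^{k_i}$, $m_j = p^{\lambda_j}$, and then reorganize the resulting sum via Proposition~\ref{pro:general}. By Corollary~\ref{factorization} we may restrict the summation variables to prime powers, writing $D_i = p^{D_i'}$ and $d_j = p^{d_j'}$, so that $\Ft = (d_1',\dots,d_{2r-1}')$ is a short pattern in the sense of Section~\ref{sec:sumatp}. The divisibility conditions of Theorem~\ref{thm1} translate directly into the inequalities defining $CQ_1(\lambda+\rho)$ (as in Proposition~\ref{pro:G}), and the identity $C_i = \Fd_i D_i$ at the exponent level reads
\begin{equation*}
D_i' = k_i - k_i(\Ft), \qquad 1\leq i\leq r, \quad D_1'=0.
\end{equation*}

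I would then carry out two elementary computations. First, the prefactor $\prod_k d_k^k L_k^{-1}$ combined with the exponential sum $\sum_{c_j}\psi(\cdots)$ collapses to $p^{\sum_{i=1}^r k_i(\Ft)}\,G(\Ft)$, with $G(\Ft)$ as in Section~\ref{sec:sumatp}: the $p^{\pm\sum_k L_k}$ factors cancel, and one invokes the telescoping identity $\sum_{k=1}^{2r-1} k\,d_k' = \sum_{i=1}^r k_i(\Ft)$. Second, using $k_{i+1}(\Ft)-k_i(\Ft) = d_{2r-i}'-d_i'$, the adjusted $H$-arguments $m_{j+1}\frac{d_j d_{2r-j-1}}{d_{j+1} d_{2r-j}}$ (for $1\leq j\leq r-2$) and $m_r\frac{d_{r-1}^2}{d_{r+1}^2}$ have $p$-valuations $\lambda_{j+1}+k_j(\Ft)+k_{j+2}(\Ft)-2k_{j+1}(\Ft)$ and $\lambda_r+2k_{r-1}(\Ft)-2k_r(\Ft)$, respectively.

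The core step is to group the sum according to the weighting vector $k_B(\Ft)=(k_1(\Ft),\dots,k_r(\Ft))$ and apply Proposition~\ref{pro:general} to rewrite $\sum_{\Ft:\,k_B(\Ft)=k_B} G(\Ft)$ as $\sum_{\Ft\in CQ_C(\upsilon(\lambda+\rho)),\,k_C(\Ft)=\upsilon(k_B)} G_{\Delta_C}(\Ft)$. Setting ${\bf k}':=\upsilon(k_B(\Ft))$, so $k_i'=2k_i(\Ft)$ for $i<r$ and $k_r'=k_r(\Ft)$ (making $k_1',\dots,k_{r-1}'$ automatically even), and defining ${\bf k}''$ by $k''_{i-1}:=2D_i'$ for $2\leq i\leq r-1$ and $k''_{r-1}:=D_r'$, the constraints $D_i'=k_i-k_i(\Ft)$ become exactly ${\bf k}'+(0,{\bf k}'')=\upsilon({\bf k})$; the rank-$(r-1)$ exponent $(D_2',\dots,D_r')$ equals $\upsilon^{-1}({\bf k}'')$; the prefactor $p^{\sum k_i(\Ft)}$ becomes $q^{k_r'+\frac{1}{2}\sum_{i=1}^{r-1}k_i'}$; and the adjusted $\lambda$-values from the previous paragraph rewrite as the components of $\nu$. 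The main obstacle is the careful coordination of three layers of indexing---the Theorem~\ref{thm1} summation variables $(D_i,d_j)$, the short-pattern weightings $k_i(\Ft)$, and the $\upsilon$-image vectors $({\bf k}',{\bf k}'')$---together with checking that the range of summation is faithfully recovered by the constraint ${\bf k}'+(0,{\bf k}'')=\upsilon({\bf k})$ with $k_i'$ even for $i<r$. Once these dictionaries are aligned, the lemma follows by direct substitution.
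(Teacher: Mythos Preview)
Your proposal is correct and follows exactly the approach indicated in the paper, which simply states that the lemma is obtained by combining Proposition~\ref{pro:general} with Theorem~\ref{thm1}. You have filled in the bookkeeping details---the identity $\sum_{k=1}^{2r-1} k\,d_k' = \sum_{i=1}^r k_i(\Ft)$, the translation of the adjusted $H$-arguments into the components of $\nu$, and the dictionary between $(D_i',k_i(\Ft))$ and $({\bf k}',{\bf k}'')$---all of which check out.
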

Note that since $k'_{i}$ is even for $1\leq i\leq r-1$ and ${\bf k}'+(0,{\bf k}'')=\upsilon({\bf k})$, we also have $k''_i$ is even for $1\le i\leq r-2$.  Thus $\upsilon^{-1}({\bf k}'')$ is a
vector of nonnegative integers, and the right-hand-side is well-defined.  (In fact, 
by Lemma \ref{lm:k-even} the factor $G_{\Delta_{C}}(\Ft)$ is also zero unless $k'_{i}$ is even for $1\leq i\leq r-1$.)

Recall that 
$GT^{\circ}(\mu')$  denotes the subset of GT-patterns $P$ in $GT(\mu')$ satisfying $\Fc(x)\equiv 0\bmod{2}$
for all generic entries $x$ in $P$.
A straightforward calculation shows that
\begin{lemma}\label{lm:GT}
A GT-pattern $P$ is in $GT^{\circ}(\mu')$ if and only if $P$ satisfies the following two conditions:
\begin{enumerate}
\item $a_{i,j}\equiv \mu_r \bmod{2}$;
\item In each row of $b$'s, there is at most one entry $b_{i,j_0}\not\equiv \mu_r\bmod{2}$. 
If such an entry exists, then $b_{i,j}=a_{i,j}=a_{i-1,j}$ for all $j$ such that $j_0<j\leq r$.
\end{enumerate}
\end{lemma}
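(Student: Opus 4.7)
The plan is to verify the two directions in turn.

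For the direction $(\Leftarrow)$, assume conditions (1) and (2). For any generic entry $x$ in row $i$, I would compute $\Fc(x)\bmod 2$ directly using (1): since every $a$-entry is $\equiv \mu_r\bmod 2$, both $\Fc(a_{i,j})$ and $\Fc(b_{i,j})$ reduce to
\[
\sum_{m=i}^{j-1}b_{i,m}+(j-i)\mu_r \bmod 2,
\]
the tail sum $\sum_{k>j}(a_{i-1,k}+a_{i,k})$ that distinguishes the two reducing to $2(r-j)\mu_r\equiv 0$. Condition (2) rules out generic entries in columns $>j_0$, because the clause $b_{i,j}=a_{i,j}=a_{i-1,j}$ for $j>j_0$ makes these entries maximal. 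Hence any generic $x$ sits at some column $j\le j_0$, and every $b_{i,m}$ appearing in the displayed sum has $m<j\le j_0$ and is thus $\equiv \mu_r$. This yields $\Fc(x)\equiv 2(j-i)\mu_r\equiv 0\bmod 2$.

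For the direction $(\Rightarrow)$, I would induct on the row $i$. The base $i=0$ follows from the top-row formula \eqref{eq:top-row} applied to $\upsilon(\mu)=(2\mu_1,\ldots,2\mu_{r-1},\mu_r)$, which gives $a_{0,j}=\mu_r+2\sum_{k=j}^{r-1}\mu_k\equiv \mu_r\bmod 2$. For the inductive step, assume $a_{i-1,k}\equiv \mu_r\bmod 2$ for every $k$. A short direct calculation with the definitions, combined with this hypothesis, yields the key parity recursion
\[
\Fc(b_{i,j})-\Fc(b_{i,j+1})\equiv b_{i,j}+\mu_r\bmod 2,
\]
which crucially does not depend on the still-unknown parities of the $a_{i,k}$. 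Together with the always-valid relation $\Fc(a_{i,j+1})-\Fc(a_{i,j})\equiv b_{i,j}+a_{i,j+1}\bmod 2$, this supports a descending induction on $j$ inside row $i$ that propagates parity information. The analysis case-splits on whether $b_{i,j}$ and $a_{i,j}$ are maximal, minimal, or generic: maximal and minimal values are fixed by row $i-1$ entries (hence $\equiv \mu_r$), or equal $0$ only when $b_{i,r}$ is minimal, while generic entries supply the equation $\Fc\equiv 0\bmod 2$ which the above recursion propagates.

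The main obstacle is the boundary case analysis at $j=r$, whose correct handling simultaneously yields uniqueness of the exception and its structural consequences. The minimal value $b_{i,r}=0$ has parity $\not\equiv \mu_r$ precisely when $\mu_r$ is odd, which is the one permissible mechanism producing an exception under condition (2). Suppose instead there were two exceptions $j_1<j_2$ in the same row, with intermediate $b_{i,m}$ all $\equiv \mu_r$; iterating the parity recursion gives $\Fc(b_{i,j_1})-\Fc(b_{i,j_2})\equiv 1\bmod 2$. But any $b_{i,j}\not\equiv \mu_r$ with $j<r$ must be generic (since maximal or minimal non-$r$ values inherit parity $\mu_r$ from row $i-1$), which forces $\Fc(b_{i,j_1})=\Fc(b_{i,j_2})=0$, a contradiction. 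Finally, once $j_0$ is identified as the unique exception, the interlacing inequalities together with the recursion rule out any strict inequality in columns $j>j_0$: such an inequality would produce a generic entry whose $\Fc$-residue, now depending on $b_{i,j_0}\not\equiv \mu_r$, is $\equiv 1\bmod 2$, contradicting $P\in GT^\circ(\mu')$. This forces $b_{i,j}=a_{i,j}=a_{i-1,j}$ for all $j>j_0$ and closes the induction.
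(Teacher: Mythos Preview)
The paper does not supply a proof of this lemma beyond the phrase ``A straightforward calculation shows that'', so there is nothing to compare against; your write-up is in fact more detailed than the paper's. Your $(\Leftarrow)$ direction is correct, and the key parity recursion
\[
\Fc(b_{i,j})-\Fc(b_{i,j+1})\equiv b_{i,j}+\mu_r\pmod 2
\]
(independent of the $a_{i,k}$) is exactly the right engine for the $(\Rightarrow)$ direction.

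There is, however, a genuine gap in your uniqueness argument. You assert that two exceptions $j_1<j_2$ force $\Fc(b_{i,j_1})=\Fc(b_{i,j_2})=0$, but your justification (``any $b_{i,j}\not\equiv\mu_r$ with $j<r$ must be generic'') only covers $j<r$. When $j_2=r$ and $b_{i,r}=0$ is \emph{minimal} (possible when $\mu_r$ is odd), $b_{i,r}$ is not generic and you have no direct control over $\Fc(b_{i,r})$. In this boundary case the recursion gives $\Fc(b_{i,r})\equiv 1$, and the contradiction must instead be extracted from the $a_{i,\cdot}$ row: since $\Fc(a_{i,r})=\Fc(b_{i,r})\equiv 1$, the entry $a_{i,r}$ is forced to be maximal or minimal, and one then propagates downward/leftward through the $a$ and $b$ entries until a generic entry with odd $\Fc$ appears. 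Your sketch (``descending induction on $j$ inside row $i$'') gestures at this, but the written argument does not actually carry it out.

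A similar issue affects your derivation of the structural clause. Knowing that $b_{i,j}$ for $j>j_0$ is not generic only tells you it is maximal \emph{or minimal}; being minimal ($b_{i,j}=a_{i-1,j+1}$) does not by itself ``produce a generic entry'', contrary to what you assert. To pin down $b_{i,j}=a_{i-1,j}$ and $a_{i,j}=b_{i,j}$ one must again interleave the $b$- and $a$-entry analysis: for instance, if $b_{i,j}$ were minimal one checks that the resulting parity of $a_{i,j}$ or of an entry further along forces a generic entry with odd $\Fc$. This is routine but needs to be written out; as it stands the final paragraph is an outline rather than a proof.
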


We now turn to the proof of Theorem~\ref{main-theorem}. In Proposition~\ref{first-formula}, we have 
seen that 
$$
D_B(z;-q^{-1})\chi_{\lam}(z)
=\sum_{\bf k}H^{\flat}(p^{\bf k};p^{\lam})z_1^{k_1-\frac{a_{0,1}}{2}}z_2^{k_2-k_1-\frac{a_{0,2}}{2}}
\cdots z_r^{k_{r}-k_{r-1}-\frac{a_{0,r}}{2}}
$$
where $(a_{0,1},\dots,a_{0,r})$ is the top row of the GT-patterns in $GT(\upsilon(\lam+\rho))$ and
 $H^{\flat}(p^{\bf k};p^{\lam})=q^{-\sum_{i=1}^r k_i} H(p^{\bf k};p^{\lam})$.
On the right-hand side of Eqn.~\eqref{eq:T-B}, we have
\begin{equation}
\sum_{P\in GT^{\circ}(\upsilon(\lam+\rho))}G(P)z^{-\frac{\wt(P)}{2}}=
\sum_{\wt}\sum_{\substack{P\in GT^{\circ}(\upsilon(\lam+\rho))\\wt(P)=-wt}}G(P)z^{\frac{\wt(P)}{2}}.
\end{equation}
Note that by Lemma 10 the outer sum only runs through over vectors $\wt$ such that $\wt_i-a_{0,i}$ is even.
Now let $\wt$ be the specific vector (depending on ${\bf k}$, $\mu$) with coordinates
$\wt_i=2k_{i}-2k_{i-1}-a_{0,i}$ for $1\leq i\leq r$ (with $k_0=0$).
Set $t=-q^{-1}$. 
To complete the proof of Theorem~\ref{main-theorem}, it is sufficient to show that
\begin{equation}\label{eq:G=H}
H^{\flat}(p^{\bf k};p^{\bf m})=\sum_{\substack{P\in GT^{\circ}(\upsilon(\lam+\rho))\\ \wt(P)=-\wt}}G(P).
\end{equation}
Indeed, once we have established this, we will have shown that for fixed $q$ both sides of \eqref{eq:T-B} agree as polynomials
in the $z_i$.  Then each coefficient of their difference is a polynomial in $t$ with infinitely many roots, hence is zero.

We have already exhibited an inductive formula for the left-hand side of \eqref{eq:G=H}.
To establish this equality, we give a matching inductive formula for the right-hand side. 
This is based on the type C theory in \cite{FZ}.
Recall that a short GT-pattern of type C is an array of non-negative integers
$$
P_1=\begin{pmatrix}
a_{0,1}&&a_{0,2}&&\cdots&&a_{0,r}&\\
&b_{1,1}&&b_{1,2}&\cdots&b_{1,r-1}&&b_{1,r}\\
&&a_{1,2}&&\cdots&&a_{1,r}&
\end{pmatrix}
$$
such that the rows interleave.
Denote $GT_1(\mu')$ be the set of short GT-patterns of type C with the top row \eqref{eq:top-row}.
Similarly, we have the subset $GT^{\circ}_1(\mu')$. 
For ${\bf k}'\in\Z^r_{\geq0}$, let $GT^{\circ}_1(\mu',{\bf k}')$ be the set of short GT-patterns of type C with bottom row
\begin{equation}
(a_{0,2}+k'_1-k'_2,\dots,a_{0,r-1}+k'_{r-2}-k'_{r-1},a_{0,r}+k'_{r-1}-2k'_r),
\end{equation}
and such that $\sum^r_{i=1}a_{0,i}-2\sum^r_{i=1}b_{1,i}+\sum^r_{i=2}a_{1,i}=a_{0,1}-k'_1$.
For each short GT-pattern $P_1$, we similarly associate a polynomial $G(P_1)$.
Then assigning to each $P$ a pair $(P_1,Q)$ where $P_1$ consists of the top three rows of $P$ and $Q$ is
the GT pattern of rank one less obtained by removing the top two rows, we obtain an inductive formula
$$
\sum_{\substack{P\in GT^{\circ}(\mu')\\ \wt(P)=-\wt}}G(P)
=\sum_{\substack{{\bf k}',{\bf k}''\\ {\bf k}'+(0,{\bf k}'')= \upsilon({\bf k})}}
\sum_{P_1\in GT^{\circ}_{1}(\mu',{\bf k}')}G(P_1)\sum_{\substack{Q\in GT^{\circ}(\mu'')\\ \wt(Q)=-(\wt_2,\dots,\wt_r)}}G(Q).
$$
where the outer sum on the right-hand side is over vectors ${\bf k}'=(k'_{1},\dots,k'_{r})$  and ${\bf k}''=(k''_{1},\dots,k''_{r-1})$ of non-negative integers with $k'_{i}$ even for $1\leq i\leq r-1$
such that ${\bf k}'+(0,{\bf k}'')=\upsilon({\bf k})$,  and where
$$
\mu''=(2\mu_{2}+k'_{1}+k'_{3}-2k'_{2},\dots,2\mu_{r-1}+k'_{r-2}+2k'_{r}-2k'_{r-1},\mu_{r}+k'_{r-1}-2k'_{r}).
$$

Comparing to the inductive formula in Lemma \ref{lm:inductive-H}, to verify \eqref{eq:G=H}, it is equivalent to show that \eqref{eq:G=H} holds for $r=1$ and also that
\begin{equation}\label{last-step}
\sum_{P_1\in GT_{1}(\mu',{\bf k}')}G(P_1)=
\sum_{\substack{\Ft\in CQ_C(\mu')\\ k_C(\Ft)={\bf k}'}}G_{\Delta_C}(\Ft).
\end{equation}
When $r=1$, it is easy to verify the identity directly.  So it suffices to establish \eqref{last-step}.

For each $P_1\in GT_1(\mu',{\bf k}')$, let $d_r=b_{1,r}$ and, for $1\leq j\leq r-1$, let $d_j=b_{1,j}-a_{0,j+1}$  and $d_{2r-j}=b_{1,j}-a_{1,j+1}$.
Set $\Ft=(d_1,\dots,d_{2r-1}$).
Then the map $P_1\to \Ft$ establishes a bijection from $GT_1(\mu',{\bf k}')$ to $CQ_C(\mu',{\bf k}')$.
Let $\Delta_C(\Ft)=(\Fc_1,\dots,\Fc_{2r-1})$.
We have $\Fc(b_{1,j})\equiv\Fc_j\bmod{2}$ for $1\leq j\leq r$ and $\Fc(a_{1,j})=\Fc_{2r-j}$ for $1\leq j<r$. 
Hence an entry $x$ in $P_1$ is maximal or minimal if and only if the corresponding entry $\Fc_j$ in $\Delta_C(\Ft)$ is boxed or circled. 
Therefore, for such $P_1$ and $\Ft$, $G(P_1)=G_{\Delta_C}(\Ft)$.   Hence the equality \eqref{last-step} is true via term-by-term matching.
The Theorem follows. \qed

We close this Section with an example: $\lam=(3,2)$.  By Fulton and Harris~\cite{FultonHarris}, pg.\ 408, 
one has
$$
\chi_{\lam}(z)=\frac{
	\det\begin{pmatrix}
		z^{\frac{11}{2}}_1-z^{-\frac{11}{2}}_1& z^{\frac{11}{2}}_2-z^{-\frac{11}{2}}_2\\ 
		z^{\frac{3}{2}}_1-z^{-\frac{3}{2}}_1& z^{\frac{3}{2}}_2-z^{-\frac{3}{2}}_2
	\end{pmatrix}
}{
	\det\begin{pmatrix}
		z^{\frac{3}{2}}_1-z^{-\frac{3}{2}}_1& z^{\frac{3}{2}}_2-z^{-\frac{3}{2}}_2\\ 
		z^{\frac{1}{2}}_1-z^{-\frac{1}{2}}_1& z^{\frac{1}{2}}_2-z^{-\frac{1}{2}}_2
	\end{pmatrix}
} ,
$$
while the Weyl denominator is given by
$$
D_{B}(z;t)=z_{1}^{-\frac{3}{2}}z_{2}^{-\frac{1}{2}}(1+tz_{1})(1+tz_{1}z_{2}^{-1})(1+tz_{1}z_{2})(1+tz_{2}).
$$
The coefficient of $z_2^{11/2}$ in $D_{B}(z;t)\chi_\lam(z_1,z_2)$ is
$$
z_1^{3/2}t^3+z_1^{1/2}\left(t^3+t^2\right)+z_1^{-1/2}(t^3+t^2)+z_1^{-3/2}t^2,
$$
and the corresponding $GT$-patterns ($\wt_2=-11$, $\wt_1=3-2b_{1,2}$ with $b_{1,2}=3,2,1,0$) are
$$
\begin{pmatrix}
	11&&3&\\ &11&&b_{1,2}\\ &&11&\\ &&&11
\end{pmatrix}.
$$

\section{A Formula in Terms of Symplectic Shifted Tableaux}

In this last Section we recast the result above in terms of symplectic shifted tableaux.
Let us recall the standard symplectic tableaux of shape $\mu'$ associated to a GT-pattern in $GT(\mu')$
(see Hamel and King \cite{HK}, Definition 2.5).
Consider $\mu'$ as a partition with parts $\mu'_i$.
A shifted Young diagram defined by $\mu'$ consists of $|\mu'|=\sum^r_{i=1}\mu'_i$ boxes arranged in $r$ rows of lengths $\mu'_i$ with the rows left justified to a diagonal line.
Let $A=\{1,2,\dots,r\}\cup \{\bar{1},\bar{2},\dots,\bar{r}\}$ with ordering $\bar{1}<1<\bar{2}<\cdots<\bar{r}<r$. 
Define $ST^{\mu'}(sp(2r))$ to be the set of shifted Young diagrams of shape $\mu'$ which are filled
by placing an entry from $A$ in each of the boxes so that the entries are weakly increasing from left to right across each row and from top to bottom across each column, 
and are strictly increasing from top-left to bottom-right along each diagonal. 

Referring to \cite{HK}, Definition~5.2 or Beineke, Brubaker and Frechette \cite{BeBrFr}, Section~5.2, there is a bijection between 
the set of symplectic shifted tableaux of shape $\mu'$ and the set of strict GT-patterns of highest weight $\mu'$. 
Given a GT-pattern $P$,  the tableaux $S_P$ is obtained by the following rules:
the entry $a_{i-1,j}$ (resp.~$b_{i,j}$) of $P$ counts the number of boxes in the $(j-i+1)$-th row of $S_P$
whose entries are less than equal to the value $r-i+1$ (resp.~$\overline{r-i+1}$).

Let $ST^{\mu'}_\circ$ be the subset of $ST^{\mu'}(sp(2r))$ 
corresponding to $GT^\circ(\mu')$ under this bijection.
Then a tableau in $ST^{\mu'}(sp(2r))$ is in $ST^{\mu'}_\circ$ if and only if it satisfies the following two conditions.
\begin{enumerate}
	\item For each $m=2,\dots,r$, the sum of the numbers of $m$ and $\bar{m}$ in every row except the $m$-th row is even.
	\item For each $m=1,\dots,r$, there is at most one row containing an odd number of the entries $m$. 
	If there is one, when this row is above the $m$-th row, there is no $m$ occurring below this row and the entries $\bar{m}$ in and below this row are connected; when this row is the $m$-th row, there is no further restriction. 
\end{enumerate}
For each $S\in ST^{\mu'}_\circ$ and $m=1,\dots,r$, let $l_S(m)$ be the number of the row which contains an odd number of the entries $m$.
If there is no row containing an odd number of the entries $m$, we let $l_S(m)=m$.
Then it is easy to see that $l_S(1)=1$ and $l_S(m)\leq m$ for all $S$. 

Given $S\in ST^{\mu'}_\circ$, we define the following additional statistics.  Let 
$str(S)$ be the total number of connected components of all the ribbon strips of $S$ (see \cite{HK}, Definition 3.2);
$x_m(S)$ (resp.\ $x_{\bar{m}}(S)$) be  the number of entries $m$ (resp.\ $\bar{m}$) in $S$;
$$
\wt(S)=(x_r(S)-x_{\bar{r}}(S),x_{r-1}(S)-x_{\overline{r-1}}(S),\dots,x_1(S)-x_{\bar{1}}(S));
$$
$row_m(S)$ (resp.\ $row_{\bar{m}}(S))$ be the number of rows of $S$ containing the entry $m$ (resp.\ $\bar{m}$);
$con_{\bar{m}}(S)$ be the number of connected components of the ribbon strips of $S$ consisting solely of entries $\bar{m}$; and
$$
\overline{hgt}(S)=\sum^r_{m=1}(row_{\bar{m}}(S)-con_{\bar{m}}(S)-row_{m}(S)).
$$

\begin{example}
Let $P$ be the GT-pattern \eqref{pattern}.
Then the corresponding symplectic shifted tableau $S_P$ is
$$
\begin{ytableau}
1& 1 & 1 & \bar{2} & \bar{2} &3&3&4&4& \bar{5} & \bar{5} \\
\none&\bar{2}&\bar{2}&2&3&3&\bar{5}&\bar{5}&5&5&\none\\
\none&\none&\bar{3}&\bar{4}&\bar{4}&\bar{5}&5&5&5&\none&\none\\
\none&\none&\none&4&\bar{5}&\bar{5}&\none&\none&\none&\none&\none\\
\none&\none&\none&\none&\bar{5}&\none&\none&\none&\none&\none&\none
\end{ytableau}
$$
For this tableau, $str(S)=13$, $l_S(5)=3$ and $l_S(m)=m$ for $1\leq m\leq 4$. 
\end{example}

Using these statistics, we may express the character value $\chi_\lambda(z)$ in terms of symplectic shifted tableaux as follows.

\begin{corollary}\label{tableaux-result}
Let $\lam$ be a dominant weight of the Lie algebra $\mathfrak{so}_{2r+1}$ and $\rho$ be the sum of the fundamental weights. Then
$$
D_B(z;t)\chi_{\lambda}(z)=\sum_{S\in ST^{\upsilon(\lam+\rho)}_{\circ}}(-1)^{\frac{r(r+1)}{2}-l_S}
t^{\overline{hgt}(S)+l_S}(1+t)^{str(S)-r}z^{-\frac{\wt(S)}{2}}.
$$	
\end{corollary}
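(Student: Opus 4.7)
The plan is to derive the corollary from Theorem~\ref{main-theorem} by rewriting the sum over $P \in GT^\circ(\upsilon(\lambda+\rho))$ as a sum over $S \in ST^{\upsilon(\lambda+\rho)}_\circ$ via the bijection $P \mapsto S_P$ recalled at the start of this section. Since $ST^{\upsilon(\lambda+\rho)}_\circ$ is \emph{defined} as the image of $GT^\circ(\upsilon(\lambda+\rho))$ under this bijection, the task reduces to establishing the term-by-term identity
\begin{equation*}
G(P)\, z^{-\wt(P)/2} \;=\; (-1)^{\frac{r(r+1)}{2} - l_{S_P}}\, t^{\overline{hgt}(S_P) + l_{S_P}}\, (1+t)^{str(S_P) - r}\, z^{-\wt(S_P)/2},
\end{equation*}
for every $P \in GT^\circ(\upsilon(\lambda+\rho))$, with $l_{S_P} := \sum_{m=1}^r l_{S_P}(m)$.

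First I would match the monomial in $z$. From the bijection, $a_{i-1,j}$ counts the boxes filled by letters $\leq r-i+1$ in row $j-i+1$ of $S_P$, while $b_{i,j}$ counts those filled by letters $\leq \overline{r-i+1}$. A telescoping of these counts gives $\wt_i(P) = x_{r-i+1}(S_P) - x_{\overline{r-i+1}}(S_P)$, hence $z^{-\wt(P)/2} = z^{-\wt(S_P)/2}$ under the natural reindexing. Next I would match the $(1+t)$ factor, i.e., verify $gen(P) = str(S_P) - r$. Under the bijection the generic entries of $P$ are in bijection with the interior boundaries where two consecutive segments of a ribbon strip are glued together; the $r$ rows of $S_P$ start out as $r$ strip components, and each non-generic entry splits an otherwise joinable pair, so a direct count yields the identity.

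The principal obstacle, and the heart of the argument, is to match the $t$-power together with the sign. I would split this into the two statements
\begin{equation*}
max(P) - \tfrac{1}{2}\, max_1(P) \;=\; \overline{hgt}(S_P) + l_{S_P}, \qquad \tfrac{1}{2}\, max_1(P) \;\equiv\; \tfrac{r(r+1)}{2} - l_{S_P} \pmod{2},
\end{equation*}
and prove each label-by-label. Fixing $m \in \{1,\ldots,r\}$, the maximal entries of $P$ attached to boundaries of $m$ or $\bar m$ in $S_P$ contribute, after accounting for the $con_{\bar m}(S_P)$ ribbon-strip-joins, precisely $row_{\bar m}(S_P) - con_{\bar m}(S_P) - row_m(S_P)$ to the total, yielding the $\overline{hgt}(S_P)$ summand. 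The constraint defining $GT^\circ$ via Lemma~\ref{lm:GT} guarantees that at most one row of $b$'s of $P$ carries an entry of parity opposite to $\mu_r$, and on that row one has $b_{i,j} = a_{i,j} = a_{i-1,j}$ for every $j$ to the right of $j_0$; this tail is precisely the block of maximal entries with odd $\Fc$-value that $max_1(P)$ counts, and these pair off so that $\tfrac{1}{2}\, max_1(P)$ records the position tracked by $l_S(m)$ in the tableau. Summing over $m$ produces the $+l_{S_P}$ correction to the $t$-degree and, simultaneously, the sign identity, with the constant $r(r+1)/2$ arising from the default $l_S(m)=m$ in the absence of an odd-parity row.

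The hard part will be the last parity and pairing bookkeeping: while each of the statistics $max(P)$, $max_1(P)$, $gen(P)$, $\overline{hgt}(S_P)$, $l_S(m)$, $con_{\bar m}(S_P)$, $str(S_P)$ admits a transparent definition, reconciling the $GT^\circ$ parity condition on the $b$-row of $P$ with the row-parity condition on the $m$'s in $S_P$ requires a careful case analysis depending on whether the distinguished row of $b$'s lies above, at, or below the $m$-th row of $S_P$, and on whether its odd entry sits at position $j_0 = r$ or $j_0 < r$. Once these cases are handled, the two identities above are both consequences of the same pairing, which completes the proof of Corollary~\ref{tableaux-result}.
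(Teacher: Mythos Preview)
Your approach is correct and essentially the same as the paper's: both transport Theorem~\ref{main-theorem} through the bijection $P\mapsto S_P$ and match the statistics term by term, with $\wt(P)=\wt(S_P)$ and $gen(P)=str(S_P)-r$ handled exactly as you describe. The paper's bookkeeping for the remaining factors is slightly cleaner than yours: instead of your combined identity $max(P)-\tfrac12\,max_1(P)=\overline{hgt}(S_P)+l_{S_P}$ together with a mod-$2$ congruence for the sign, it records the two \emph{exact} equalities $max(P)=\overline{hgt}(S_P)+\tfrac{r(r+1)}{2}$ and $\tfrac12\,max_1(P)=\tfrac{r(r+1)}{2}-l_{S_P}$ (citing \cite{BeBrFr}, Lemma~20, for the model computation), from which both the $t$-exponent and the sign drop out at once without any residual case analysis.
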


\begin{proof}
Let $\mu'=\upsilon(\lam+\rho)$, 
$P$ be a GT-pattern in $GT^{\circ}(\mu')$ and $S_P$ be the corresponding tableau in $ST^{\mu'}_\circ$.
Following an argument similar to \cite{BeBrFr}, Lemma 20, we have
$$
gen(P)=str(S_P)-r,~max(P)=\overline{hgt}(S_P)+\frac{r(r+1)}{2}, \text{ and } 
\frac{max_1(P)}{2}=\frac{r(r+1)}{2}-l_{S_P}.
$$
By \eqref{eq:T-B}, we obtain the Corollary.

\end{proof}

 \end{document}